\renewcommand{\H}{{\mathcal H}}
\newenvironment{properties}[1]{\begin{enumerate}

}{\end{enumerate}}
\newtheorem{lemma}{Lemma}[section]
\newtheorem{proposition}[lemma]{Proposition}
\newtheorem{theorem}[lemma]{Theorem}
\theoremstyle{definition}
\newtheorem{remark}[lemma]{Remark}
\newtheorem*{assumption}{(NA) Nonexistence Assumption}
\numberwithin{equation}{section}
\title[Global compactness for nonlocal problems]{Global compactness results for nonlocal problems}
\author[L.\ Brasco]{Lorenzo Brasco}
\author[M.\ Squassina]{Marco Squassina}
\author[Y.\ Yang]{Yang Yang}
\address[L.\ Brasco]{Dipartimento di Matematica e Informatica
\newline\indent
Universit\`a degli Studi di Ferrara
\newline\indent
Via Machiavelli 35, 44121 Ferrara, Italy}
\address{{\it and }
Aix-Marseille Universit\'e, CNRS
\newline\indent
Centrale Marseille, I2M, UMR 7373, 39 Rue Fr\'ed\'eric Joliot Curie
\newline\indent
13453 Marseille, France}
\email{lorenzo.brasco@unife.it}
\address[M.\ Squassina]{Dipartimento di Informatica
	\newline\indent
	Universit\`a degli Studi di Verona,
	Verona, Italy}
\email{marco.squassina@univr.it}
\address[Y. Yang]{School of Science
	\newline\indent
	Jiangnan University,
	Wuxi, Jiangsu 214122, China}
\email{yynjnu@126.com}
\subjclass[2010]{Primary 35R11, 35J62, 35B33, Secondary 35A15}
\keywords{Fractional $p$-Laplacian, variational methods, global compactness}
\begin{document}

\begin{abstract}
We obtain a Struwe type global compactness result for a class of nonlinear
nonlocal problems involving the fractional $p-$Laplacian operator and 
nonlinearities at critical growth. 
\end{abstract}

\maketitle

\begin{center}
	\begin{minipage}{8.5cm}
		\small
		\tableofcontents
	\end{minipage}
\end{center}

\medskip

\section{Introduction}
\subsection{Overview}
In the seminal paper \cite{struwe}, M.\ Struwe obtained a very useful
global compactness result for {\it Palais-Smale sequences} of the energy functional 
\[
I(u)=\frac{1}{2}\,\int_{\Omega} |\nabla u|^2\,dx+
\frac{\lambda}{2}\,\int_{\Omega} |u|^2\,dx-\frac{N-2}{2\,N}\,\int_{\Omega} |u|^\frac{2\,N}{N-2}\,dx, \qquad u\in D^{1,2}_0(\Omega)
\]
where $\Omega\subset\mathbb{R}^N$ is a smooth open and bounded set, $N\geq 3$,  $\lambda\in\mathbb{R}$, and the space $D^{1,2}_0(\Omega)$ is defined by
\[
D^{1,2}_0(\Omega)=\left\{u\in L^\frac{2\,N}{N-2}(\mathbb{R}^N)\, :\, \int_{\mathbb{R}^N} |\nabla u|^2\,dx<+\infty,\ u=0 \mbox{ in }\mathbb{R}^N\setminus \Omega\right\}.
\]
The functional above is naturally associated with the semi-linear elliptic problem with critical nonlinearity
\begin{equation}
\label{prob-ss}
\begin{cases}
-\Delta u+\lambda\,u=|u|^{\frac{4}{N-2}}\,u  & \text{in $\Omega$}, \\
u=0 & \text{on $\partial\Omega$},
\end{cases}
\end{equation}
in the sense that critical points of $I$ are weak solutions of \eqref{prob-ss}. 
Due to the presence of the term with critical growth in its definition, the functional $I$ does not satisfy the {\it Palais-Smale condition}. In other words, sequences $\{u_n\}_{n\in\mathbb{N}}\subset D^{1,2}_0(\Omega)$ of ``almost'' critical points of $I$ with bounded energy are not necessarily precompact in $D^{1,2}_0(\Omega)$.
Struwe's result gives a precise description of what happens when compactness fails at an energy level $c$.
Roughly speaking, in this case there exists a (possibly trivial) solution $v^0$
to \eqref{prob-ss} and $k$ profiles $v^k$ solving solving the purely critical problem on the whole space
\begin{equation}
\label{limiting}
-\Delta u=|u|^{\frac{4}{N-2}}u,\qquad \text{in } \mathbb{R}^N.
\end{equation}
such that the sequence $\{u_n\}_{n\in\mathbb{N}}$ can be ``almost'' written as a superposition of $v^0,\dots,v^k$.
More precisely, there exist $\{z_n^i\}_{n\in\mathbb{N}}\subset\mathbb{R}^N$ and $\{\lambda^i_n\}_{n\in\mathbb{N}}\subset\mathbb{R}_+$ converging to $0$ as $n\to\infty,$ with
$$
u_n \simeq v^0+\sum_{i=1}^k(\lambda_n^i)^{\frac{2-N}{2}}\,v^i\left(\frac{\cdot-z_n^i}{\lambda_n^i}\right),\qquad\text{ in }D^{1,2}_0(\mathbb{R}^N),
$$
and 
\begin{equation}
\label{equipartition}
c=I(v^0)+I_\infty(v^1)+\cdots+I_\infty(v^k), 
\end{equation}
where $I_\infty$ is the energy functional associated with equation \eqref{limiting}, i.e.
\[
I_\infty(u)=\frac{1}{2}\,\int_{\mathbb{R}^N} |\nabla u|^2\,dx-\frac{N-2}{2\,N}\,\int_{\mathbb{R}^N} |u|^\frac{2\,N}{N-2}\,dx.
\] 
This kind of result is very useful to study the existence of ground states
for nonlinear Schr\"odinger equations, Yamabe-type
equations or various classes of minimization problems. 
\par
Since then, several extensions of Struwe's result appeared in the literature for semi-linear elliptic 
problems. We refer the reader to \cite[Lemma 5]{ggs} for the case of the bilaplacian operator $\Delta^2$ with both Navier or Dirichlet boundary conditions 
and to \cite[Theorem 1.1]{palpis} for nonlocal problems involving the fractional Laplacian $(-\Delta)^s$ for $s\in (0,1)$. However, the linearity
of the operator does not seem essential in the derivation of this type of results. In fact in \cite[Theorem 1.2]{MW} (see also \cite{alves,yan}) 
a similar result was obtained for {\it signed} Palais-Smale sequences of the functional associated with the problem
\begin{equation*}
\begin{cases}
- \Delta_p u  +a\,|u|^{p-2}\,u=\mu\, |u|^{p^\ast - 2}\,u & \text{in $\Omega$}  \\
u  = 0  & \text{on $\partial\Omega$},
\end{cases}
\end{equation*}
where $a\in L^{N/p}(\Omega)$, $\mu>0$, $\Delta_p$ is the $p-$Laplacian operator and $p^*=N\,p/(N-p)$.
\par
Applications of these results are provided to constrained minimization problems, to {\it Br\'ezis--Nirenberg
type problems} (see \cite{MW}) and to {\it Bahri--Coron type problems} (see \cite{merc-sci-squ}), namely the existence of positive solutions
to the purely critical problem
\[
- \Delta_p u =\mu\, |u|^{p^\ast - 2}u,\qquad \mbox{ in }\Omega,
\]
when the domain $\Omega$ has a nontrivial topology. 
For the aforementioned results in the semi-linear case $p=2$, we also refer to the monograph \cite{willem-b}.

\subsection{Main results}
Let $1<p<\infty$ and $s\in(0,1)$.\ The aim of this paper is to obtain a global compactness result for Palais-Smale sequences of the $C^1$ nonlocal energy functional $I:D^{s,p}_0(\Omega)\to\mathbb{R}$ defined by
\begin{equation}
\label{I}
I(u):=\frac{1}{p}\,\int_{{\mathbb R}^{2N}} \frac{|u(x)-u(y)|^p}{|x-y|^{N+s\,p}}\,dx\,dy+\frac{1}{p}\,\int_{\mathbb{R}^N} a\,|u|^p\,dx-\frac{\mu}{p^*_s}\,\int_{\mathbb{R}^N}|u|^{p_s^*}\,dx,
\end{equation}
where $a\in L^{N/sp}(\Omega)$ and $\mu>0$ (see Section \ref{notations} below for the relevant definitions).
We recall that $\{u_n\}_{n\in\mathbb{N}}\subset D_0^{s,p}(\Omega)$ is said to be a {\em Palais-Smale sequence} for 
$I$ at level $c$ if
$$
\lim_{n\to\infty} I(u_n)=c, \qquad \lim_{n\to\infty} I'(u_n)=0\quad \text{ in } D^{-s,p'}(\Omega),
$$	
where $D^{-s,p'}(\Omega)$ denotes the topological dual space of $D^{s,p}_0(\Omega)$.
Critical points of \eqref{I} now solves (in weak sense)
\begin{equation}
\label{1}
\begin{cases}
(- \Delta)_p^s\, u  +a\,|u|^{p-2}u=\mu\,|u|^{p_s^\ast - 2}u & \text{in $\Omega$},  \\
u  = 0  & \text{in $\mathbb{R}^N \setminus \Omega$},
\end{cases}
\end{equation}
where $(-\Delta)_p^s$ is the {\it fractional $p-$Laplacian operator}, formally defined by 
\begin{equation*}
(- \Delta)^s_p\, u(x):= 2\, \lim_{\varepsilon \searrow 0} \int_{\mathbb{R}^N \setminus B_\varepsilon(x)}
\frac{|u(x) - u(y)|^{p-2}\, (u(x) - u(y))}{|x - y|^{N+s\,p}}\, dy, \qquad x \in \mathbb{R}^N.
\end{equation*}
 A function $u \in D^{s,p}_0(\Omega)$ is a weak solution of \eqref{1} if
\begin{align*}
\int_{\mathbb{R}^{2N}} \frac{|u(x) - u(y)|^{p-2}\, (u(x) - u(y))\, (v(x) - v(y))}{|x - y|^{N+s\,p}}\, dx\, dy&+\int_{\mathbb{R}^N} a\,|u|^{p-2}\,u\,v\, dx \\
&= \mu\int_{\mathbb{R}^N} |u|^{p_s^\ast - 2}\,u\,v\, dx, \quad \forall v \in D^{s,p}_0(\Omega).
\end{align*}
Likewise, if $\mathcal{H}$ is the whole $\mathbb{R}^N$ or is a half-space in $\mathbb{R}^N$, the critical points $u$ of the functional $I_\infty:D^{s,p}_0(\mathcal{H})\to\mathbb{R}$ defined by
\begin{equation}
\label{Infty}
I_\infty(u):=\frac{1}{p}\,\int_{{\mathbb R}^{2N}} \frac{|u(x)-u(y)|^p}{|x-y|^{N+s\,p}}\,dx\,dy-
\frac{\mu}{p^*_s}\int_{\mathbb{R}^N}|u|^{p_s^*}dx,
\end{equation}
are weak solutions to
\begin{equation}
\label{crittico}
\begin{cases}
(- \Delta)_p^s\, u  =  \mu\,|u|^{p_s^\ast - 2}u & \text{in $\H$,}  \\
u=0 & \text{in $\mathbb{R}^N\setminus \H$}.
\end{cases}
\end{equation}
\begin{assumption}
If $\mathcal{H}$ is a half-space, then \eqref{crittico} has the trivial solution only.
\end{assumption}
Our main result is the following
\begin{theorem}
\label{Theorem3}
We assume hypothesis {\bf (NA)}. Let $1<p<\infty$ and $s\in(0,1)$ be such that $s\,p<N$. Let $\Omega\subset\mathbb{R}^N$ be an open bounded set with smooth boundary. Let $\{u_n\}_{n\in\mathbb{N}}\subset D_0^{s,p}(\Omega)$ be a Palais-Smale sequence at level $c$ for the functional $I$ defined in \eqref{I}.
\par
Then there exist: 
\begin{itemize}
\item a (possibly trivial) solution $v^0\in D_0^{s,p}(\Omega)$ of
\[
(- \Delta)_p^s\, u +a\,|u|^{p-2}\,u = \mu\,|u|^{p_s^\ast - 2}\,u,\qquad \mbox{ in } \Omega;
\]
\item a number $k\in\mathbb{N}$ and $v^1,v^2\cdots,v^k\in D^{s,p}_0(\mathbb{R}^N)\setminus\{0\}$ solutions of
\[
(- \Delta)_p^s\, u  =  \mu\,|u|^{p_s^\ast - 2}u,\qquad \text{ in } \mathbb{R}^N;
\] 
\item a sequence of positive real numbers $\{\lambda_n^i\}_{n\in\mathbb{N}}\subset \mathbb{R}_+$ with $\lambda_n^i\to 0$ and a sequence of points $\{z_n^i\}_{n\in\mathbb{N}}\subset \{x\in\Omega\, :\, \mathrm{dist}(x,\partial\Omega)\ge \lambda^i_n\}$, for $i=1,\dots,k$; 
\end{itemize}
\vskip.2cm
such that, up to a subsequence, 
\begin{equation} 
\label{decomposizione}
\lim_{n\to\infty} \left[u_n-v^0-\sum_{i=1}^k(\lambda_n^i)^{\frac{sp-N}{p}}v^i
\Big(\frac{\cdot-z_n^i}{\lambda_n^i}\Big)\right]_{D^{s,p}(\mathbb{R}^N)}\!\!=0,
\end{equation} 
\begin{equation}
\label{decomposizione2}
\lim_{n\to\infty}[u_n]^p_{D^{s,p}(\mathbb{R}^N)}=\sum_{i=0}^k [v^i]^p_{D^{s,p}(\mathbb{R}^N)},
\end{equation}
\begin{equation}
\label{decomposizione3}
I(v^0)+\sum_{i=1}^k I_\infty(v^i)=c.
\end{equation}
\end{theorem}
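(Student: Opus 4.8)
The plan is to follow the classical Struwe blow-up/induction scheme, adapted to the nonlocal and nonlinear setting. The argument proceeds by iterating a single "bubble extraction" step. First I would establish the preliminary facts: a Palais–Smale sequence $\{u_n\}$ at level $c$ is bounded in $D^{s,p}_0(\Omega)$ (standard, from the coercivity-type estimate obtained by combining $I(u_n)=c+o(1)$ and $\langle I'(u_n),u_n\rangle = o(1)\,[u_n]_{D^{s,p}}$, using $p<p^*_s$), so up to a subsequence $u_n\rightharpoonup v^0$ weakly in $D^{s,p}_0(\Omega)$, $u_n\to v^0$ strongly in $L^p_{\mathrm{loc}}$ and a.e. A Brezis–Lieb type argument together with the weak continuity of the fractional gradient (i.e. $|u_n(x)-u_n(y)|^{p-2}(u_n(x)-u_n(y))/|x-y|^{(N+sp)/p'}$ converges weakly in $L^{p'}(\mathbb{R}^{2N})$) shows that $v^0$ is a weak solution of \eqref{1} and that $w_n^1:=u_n-v^0$ is again a Palais–Smale sequence for $I_\infty$ (on all of $\mathbb{R}^N$) at the level $c-I(v^0)$, with $[u_n]^p_{D^{s,p}}=[v^0]^p_{D^{s,p}}+[w_n^1]^p_{D^{s,p}}+o(1)$ and the analogous splitting of the $L^{p^*_s}$ norms.

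**Next** comes the core dichotomy. If $w_n^1\to 0$ strongly, we are done with $k=0$. Otherwise $\|w_n^1\|_{L^{p^*_s}(\mathbb{R}^N)}$ stays bounded away from $0$, and since $\langle I_\infty'(w_n^1),w_n^1\rangle=o(1)$ forces $[w_n^1]^p_{D^{s,p}}=\mu\|w_n^1\|^{p^*_s}_{L^{p^*_s}}+o(1)$, the concentration function
\[
Q_n(r):=\sup_{z\in\mathbb{R}^N}\int_{B_r(z)}|w_n^1|^{p^*_s}\,dx
\]
must, after suitable rescaling, concentrate: one picks $\lambda_n^1>0$ and $z_n^1\in\mathbb{R}^N$ realizing a fixed small level $\delta$ of $Q_n$ at radius $\lambda_n^1$, and sets
\[
\widetilde u_n(x):=(\lambda_n^1)^{\frac{N-sp}{p}}\,w_n^1\big(z_n^1+\lambda_n^1 x\big).
\]
By scale invariance of $[\cdot]_{D^{s,p}(\mathbb{R}^N)}$ and of the $L^{p^*_s}$ norm, $\{\widetilde u_n\}$ is bounded and is again Palais–Smale for $I_\infty$ at level $c-I(v^0)$; hence $\widetilde u_n\rightharpoonup v^1$, and the normalization at level $\delta$ (with $\delta$ chosen below the optimal Sobolev threshold) prevents vanishing, so $v^1\ne 0$, and $v^1$ solves a limiting equation. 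The key point — and this is where hypothesis \textbf{(NA)} enters — is to identify the domain of the limiting equation: depending on how $\lambda_n^1$ compares to $\mathrm{dist}(z_n^1,\partial\Omega)$, the rescaled domains $(\Omega-z_n^1)/\lambda_n^1$ converge (in the sense of the relevant fractional capacities / strong convergence of the truncated test functions) either to all of $\mathbb{R}^N$ or to a half-space; the assumption \textbf{(NA)} rules out the half-space case because it would force $v^1=0$. This pins down $z_n^1\in\{\mathrm{dist}(\cdot,\partial\Omega)\ge\lambda_n^1\}$ and $\lambda_n^1\to 0$ after noting $\mathrm{diam}(\Omega)<\infty$.

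**Then** one shows the second-level residual $w_n^2(x):=w_n^1(x)-(\lambda_n^1)^{\frac{sp-N}{p}}v^1\big((x-z_n^1)/\lambda_n^1\big)$ is once more a Palais–Smale sequence for $I_\infty$ — now at level $c-I(v^0)-I_\infty(v^1)$ — with the energy and the Gagliardo seminorm again splitting additively; this uses a nonlocal Brezis–Lieb lemma for the $[\cdot]^p_{D^{s,p}}$ functional (a double-integral version) together with the fact that a rescaled nontrivial bubble escapes to infinity relative to any other scale. The iteration stops after finitely many steps: each nontrivial $v^i$ carries energy $I_\infty(v^i)\ge\beta>0$ for a universal constant $\beta$ (the mountain-pass level of $I_\infty$, positive by the Sobolev inequality), while the running levels $c-I(v^0)-\sum I_\infty(v^j)$ are nonnegative and decrease by at least $\beta$; hence $k\le (c-I(v^0))/\beta$. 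Combining all the splittings yields \eqref{decomposizione}, \eqref{decomposizione2}, \eqref{decomposizione3}.

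**The main obstacle** is the nonlocal interaction analysis in two places: (a) proving the "orthogonality" of bubbles living at different scales/centers for the double-integral Gagliardo functional — locality arguments from the classical case must be replaced by careful estimates on $\int_{\mathbb{R}^{2N}}$ using that $|x-y|^{-N-sp}$ has slow decay, so one genuinely needs quantitative decay of the bubble profiles $v^i$ (available from the known regularity/decay theory for the critical fractional $p$-Laplacian) and a splitting of the domain of integration into the "near-diagonal" and "far" regions; and (b) controlling the behavior near $\partial\Omega$ — in the nonlocal setting the constraint $u=0$ in $\mathbb{R}^N\setminus\Omega$ is not preserved under translation/dilation, so identifying the limiting profile requires showing that the truncated/rescaled sequences still satisfy the exterior condition asymptotically, which is exactly the content that makes \textbf{(NA)} both necessary and usable. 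I expect step (a) to be the most technically demanding part of the write-up.
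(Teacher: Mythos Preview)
Your overall scheme is correct and matches the paper's five-step proof closely: boundedness and extraction of $v^0$, the Levy concentration function to locate a bubble, the half-space/whole-space dichotomy disposed of by \textbf{(NA)}, and termination via the universal energy quantum. However, you have misidentified where the main technical difficulty lies.

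The point you call ``obstacle (a)'' --- energy orthogonality of bubbles at different scales --- is \emph{not} the hard part: the paper handles the splitting $[u_n^1]^p = [v^1]^p + [u_n^2]^p + o(1)$ directly via the Br\'ezis--Lieb lemma in $L^p(\mathbb{R}^{2N})$ applied to the difference quotient $(u(x)-u(y))/|x-y|^{(N+sp)/p}$, together with a Truncation Lemma showing $v\,\zeta(\mu_n\cdot)\to v$ in $D^{s,p}$ as $\mu_n\to 0$. No pointwise decay of the profiles is used, only the weaker fact that each $v^i\in L^q(\mathbb{R}^N)$ for some $q<p^*_s$ (a consequence of the limiting equation). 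Relatedly, your residual $w_n^2$ as written does not lie in $D^{s,p}_0(\Omega)$, which you need in order to iterate; the paper subtracts instead the \emph{truncated} bubble $(\lambda_n^1)^{(sp-N)/p}\,v^1\big((\cdot-z_n^1)/\lambda_n^1\big)\,\zeta\big((\cdot-z_n^1)/\varrho_n^1\big)$ with $\varrho_n^1=\tfrac12\,\mathrm{dist}(z_n^1,\partial\Omega)$, and the Truncation Lemma absorbs the cutoff error.

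The step you dispatch in one line --- ``the normalization at level $\delta$ prevents vanishing, so $v^1\ne 0$'' --- is in fact the technical heart of the proof. In the local case one tests $I_\infty'(\widetilde u_n)$ against $\widetilde u_n\, h^p$ for a cutoff $h$ and appeals to Sobolev; the nonlocal analogue requires a \emph{Caccioppoli inequality} for $(-\Delta)^s_p$ (Proposition~\ref{prop:caccioloc}) to control $[\widetilde u_n\, h]_{D^{s,p}(B_2)}^p$ by the right-hand side plus nonlocal tail terms, and one must verify that these tails are $o(1)$ under the contradiction hypothesis $v^1=0$. Only then does the choice of $\delta$ small (depending on the Caccioppoli and local Sobolev constants) close the loop. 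The paper explicitly flags this as the place where Struwe's original extension argument does not transfer to the nonlocal setting, and a different device (following Clapp) is needed.
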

By recalling that for every $u\in D^{1,p}_0(\mathbb{R}^N)$ we have
\[
\lim_{s\nearrow 1 }(1-s)\, \int_{{\mathbb R}^{2N}} \frac{|u(x)-u(y)|^p}{|x-y|^{N+s\,p}}\,dx\,dy=C\, \int_{\mathbb{R}^N} |\nabla u|^p\,dx,
\]
for a constant $C=C(N,p)>0$, original Struwe's result formally corresponds to $p=2$ and $s=1$ in Theorem \ref{Theorem3}. 
\begin{remark}[About the Nonexistence Assumption]
The nonexistence of solutions to problem~\eqref{crittico} for half-spaces is already changelling in the local case, for $p\not=2$. Indeed, without sign hypothesis on the solution, this is still open for the $p-$Laplacian. The situation is made unclear due to absence of a suitable Poho\v zaev type identity for $p\neq 2$, as well as of a unique continuation result up to the boundary. On the contrary, if we assume solutions to have constant sign, in the local case then this has been proved in \cite[Theorem 1.1]{MW}. For $0<s<1$ and $p=2$,  the non-existence of {\em signed} continuous solutions was obtained in \cite[Corollary 1.6]{MMW}. 
\end{remark}

\noindent
Next we formulate the global compactness result for radially symmetric functions in a ball $B\subset\mathbb{R}^N$. 
Due to the geometric restrictions, the final outcome is more precise and free of Assumption {\bf (NA)}. 
\begin{theorem}[Radial case]
\label{radial-case}	
Let $N\ge 2$, $1<p<\infty$ and $s\in(0,1)$ be such that $s\,p<N$. Let $B\subset\mathbb{R}^N$ be a ball centered at the origin and assume that $a\in L^{N/sp}_{{\rm rad}}(B)$. Let $\{u_n\}_{n\in\mathbb{N}}\subset D_{0,{\rm rad}}^{s,p}(B)$  be a Palais-Smale sequence for $I$ at level $c$.
Then there exist:
\begin{itemize} 
\item a (possibly trivial) solution $v_0\in D_{0,{\rm rad}}^{s,p}(B)$ of 
\[
(- \Delta)_p^s\, u  +a\,|u|^{p-2}\,u=\mu\, |u|^{p_s^\ast - 2}\,u,\quad \mbox{ in }B,
\]
\item a number $k\in\mathbb{N}$ and $v^1,v^2\cdots,v^k\in D^{s,p}_{0,{\rm rad}}(\mathbb{R}^N)\setminus\{0\}$ solutions of
\[
(- \Delta)_p^s\, u  = \mu\, |u|^{p_s^\ast - 2}\,u,\quad \mbox{ in } \mathbb{R}^N,  
\]
\item a sequence $\{\lambda_n^i\}_{n\in\mathbb{N}}\subset \mathbb{R}_+$ with $\lambda_n^i\to 0$, for $i=1,\dots,k$;
\end{itemize} 
such that, up to a subsequence, we have
\begin{equation}
\label{azero-r}
\lim_{n\to\infty}\left[ u_n-v_0-\sum_{i=1}^k(\lambda_n^i)^{\frac{s\,p-N}{p}}v^i\left(\frac{\cdot}{\lambda_n^i}\right)\right]_{D^{s,p}(\mathbb{R}^N)}=0, 
\end{equation}
and conclusions \eqref{decomposizione2} and \eqref{decomposizione3}.
\end{theorem}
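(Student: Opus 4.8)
\emph{Proof plan.} The plan is to run the Struwe-type iteration exactly as in the proof of Theorem~\ref{Theorem3}, while exploiting the radial symmetry at the concentration step so that the half-space alternative --- the only point in which hypothesis {\bf (NA)} enters --- never occurs; this is what makes the present statement unconditional. First one records the usual preliminaries. A Palais--Smale sequence $\{u_n\}_{n\in\mathbb{N}}$ at level $c$ is bounded in $D^{s,p}_{0,\mathrm{rad}}(B)$: from $I(u_n)-\frac1{p^*_s}\langle I'(u_n),u_n\rangle=\bigl(\frac1p-\frac1{p^*_s}\bigr)\bigl([u_n]^p_{D^{s,p}(\mathbb{R}^N)}+\int_B a\,|u_n|^p\,dx\bigr)=c+o(1)+o\bigl([u_n]_{D^{s,p}(\mathbb{R}^N)}\bigr)$, using the fractional Sobolev inequality together with $a\in L^{N/sp}_{\mathrm{rad}}(B)$ (to absorb the lower-order term via H\"older), one gets $\sup_n[u_n]_{D^{s,p}(\mathbb{R}^N)}<\infty$. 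Up to a subsequence $u_n\rightharpoonup v_0$ in $D^{s,p}_{0,\mathrm{rad}}(B)$; by weak continuity of the fractional $p$-Laplacian, local compactness of the term containing $a$, and a nonlocal Br\'ezis--Lieb argument for the critical term, $v_0$ is a weak solution of the equation in $B$ (radial, hence a genuine weak solution by the principle of symmetric criticality), and $u^1_n:=u_n-v_0$ is a Palais--Smale sequence for $I_\infty$ at level $c_1:=c-I(v_0)$, with $[u^1_n]^p_{D^{s,p}(\mathbb{R}^N)}=[u_n]^p_{D^{s,p}(\mathbb{R}^N)}-[v_0]^p_{D^{s,p}(\mathbb{R}^N)}+o(1)$. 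If $u^1_n\to0$ strongly we stop, with $k=0$.

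If $u^1_n\not\to0$, then $\limsup_n\|u^1_n\|_{L^{p^*_s}}^{p^*_s}>0$, and a Lions-type concentration-compactness analysis yields a scale $\lambda^1_n\to0$ and, a priori, centers $z^1_n\in B$ such that, up to a subsequence, $w_n:=(\lambda^1_n)^{\frac{N-sp}{p}}\,u^1_n(\lambda^1_n\,\cdot\,+z^1_n)\rightharpoonup v^1\neq0$ in $D^{s,p}(\mathbb{R}^N)$, where $v^1$ solves $(-\Delta)^s_p v=\mu\,|v|^{p^*_s-2}v$ on a limiting domain which, exactly as in Theorem~\ref{Theorem3}, is either $\mathbb{R}^N$ or a half-space. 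Radial symmetry is decisive at this point: the limiting concentration measure $\nu=\lim_n|u^1_n|^{p^*_s}$ is rotation invariant, hence its atomic part --- which carries all of the concentration --- is supported at the origin, since an atom away from the origin would force the whole sphere through it to be atomic, with infinite total mass (this is the only place where $N\ge2$ is used). Therefore $z^1_n\to0$ and $|z^1_n|/\lambda^1_n$ stays bounded, so after a translation in the rescaled variable (which leaves the limiting equation on $\mathbb{R}^N$ invariant) one may take $z^1_n\equiv0$; then the rescaled domains $(\lambda^1_n)^{-1}B$ exhaust $\mathbb{R}^N$, the limiting domain is $\mathbb{R}^N$, and $v^1$ is a nontrivial radial solution on $\mathbb{R}^N$ --- with no appeal to {\bf (NA)}.

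Then I would iterate. Setting $u^2_n:=u^1_n-(\lambda^1_n)^{\frac{sp-N}{p}}v^1(\,\cdot\,/\lambda^1_n)$, and using the scale-invariance of $[\,\cdot\,]_{D^{s,p}(\mathbb{R}^N)}$ and of $I_\infty$, a nonlocal Br\'ezis--Lieb lemma, and the fact that dilation to a vanishing scale annihilates the cross terms in the Gagliardo kernel and in the $L^{p^*_s}$-norm, one checks that $u^2_n$ is again a Palais--Smale sequence for $I_\infty$, now at level $c_2:=c_1-I_\infty(v^1)$, with $[u^2_n]^p_{D^{s,p}(\mathbb{R}^N)}=[u^1_n]^p_{D^{s,p}(\mathbb{R}^N)}-[v^1]^p_{D^{s,p}(\mathbb{R}^N)}+o(1)$. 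Repeating produces $v^2,v^3,\dots$, all nontrivial radial solutions on $\mathbb{R}^N$, and the process terminates after finitely many steps: every nontrivial solution $v$ of the critical equation on $\mathbb{R}^N$ satisfies $[v]^p_{D^{s,p}(\mathbb{R}^N)}\ge\beta$ for some $\beta=\beta(N,s,p,\mu)>0$ (test the equation with $v$ and invoke the fractional Sobolev inequality), while the telescoping seminorm identities give $\sum_i[v^i]^p_{D^{s,p}(\mathbb{R}^N)}\le\limsup_n[u_n]^p_{D^{s,p}(\mathbb{R}^N)}<\infty$, so $k\le\limsup_n[u_n]^p_{D^{s,p}(\mathbb{R}^N)}/\beta$. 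When the iteration stops, the last residual $u^{k+1}_n\to0$ strongly in $D^{s,p}(\mathbb{R}^N)$, which is exactly \eqref{azero-r}; summing the telescoping seminorm identities gives \eqref{decomposizione2}, and summing $c_{j+1}=c_j-I_\infty(v^j)$ together with $c_1=c-I(v_0)$ gives \eqref{decomposizione3}.

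The main obstacle is the nonlocal nature of the operator: the fractional $p$-Laplacian does not converge pointwise along the rescalings, so each step that is routine in the local case --- that the rescaled sequences remain Palais--Smale, that the weak limits solve the limiting equations, and that bubbles living at different scales do not interact --- must instead be obtained through a nonlocal Br\'ezis--Lieb lemma and quantitative control of the Gagliardo kernel under translations and dilations. The radial refinement adds only the softer task of making the ``a sphere looks flat after rescaling'' heuristic rigorous; phrased through the concentration measure as above, it reduces to the elementary fact that a rotation-invariant atomic measure on $\mathbb{R}^N$ with $N\ge2$ is supported at the origin --- equivalently, that $D^{s,p}(\mathbb{R}^N)$ contains no nonzero function depending on a single variable.
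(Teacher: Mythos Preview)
Your overall plan---run the Theorem~\ref{Theorem3} iteration and use radiality to rule out the half-space alternative---is exactly the paper's strategy. However, the \emph{mechanism} you use at the decisive step is genuinely different from the paper's, and your version has a gap.

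The paper does not argue through the rotation invariance of a concentration measure. Instead it proves an improved compact embedding for radial functions (Proposition~\ref{prop:radembedding}, based on the nonlocal Radial Lemma~\ref{lm:radial}): for $N\ge 2$ one has $p^*_s<p^\#_s$, hence $D^{s,p}_{0,\mathrm{rad}}(B)\hookrightarrow L^{p^*_s}(K)$ compactly for every $K\Subset\mathbb{R}^N\setminus\{0\}$. Since $u_n^1\rightharpoonup 0$, this gives $u_n^1\to 0$ in $L^{p^*_s}$ on any annulus, so the mass $\delta=\int_{B_{\lambda_n^1}(z_n^1)}|u_n^1|^{p^*_s}$ forces $z_n^1\to 0$. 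As $0$ is an interior point of $B$, $\mathrm{dist}(z_n^1,\partial B)/\lambda_n^1\to\infty$ and Lemma~\ref{proof5} applies; this is precisely what eliminates the need for {\bf (NA)}. The cut-off construction of Lemma~\ref{proof5} is then retained, so that the subtracted sequence stays in $D^{s,p}_0(B)$ and the iteration can be rerun verbatim; the translations by $z_n^i$ are removed only at the very end.

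Your argument, by contrast, asserts that the atomic part of $\nu=\lim|u_n^1|^{p^*_s}$ is supported at the origin and concludes ``therefore $z_n^1\to 0$ \emph{and $|z_n^1|/\lambda_n^1$ stays bounded}''. The first conclusion is fine; the second does not follow from the atomic-measure argument and is the crux of taking $z_n^1\equiv 0$. Nothing in the Lions picture rules out $z_n^1\to 0$ with $|z_n^1|/\lambda_n^1\to\infty$ (think of a radial sequence concentrating on spheres of radius $r_n$ with $\lambda_n^1\ll r_n\to 0$). Your closing remark that ``$D^{s,p}(\mathbb{R}^N)$ contains no nonzero function depending on a single variable'' is a different statement and would have to be turned into an actual argument here (namely: if $|z_n^1|/\lambda_n^1\to\infty$ then the blow-up limit $v^1$, being the limit of radial functions recentred far from the origin at scale $\lambda_n^1$, would be a function of one variable, contradicting $v^1\in D^{s,p}_0(\mathbb{R}^N)\setminus\{0\}$); you have not made this link. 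A second, more technical gap: you subtract the rescaled profile without a cut-off, so $u_n^2\notin D^{s,p}_0(B)$ in general, and the machinery you need (in particular Step~3, which uses that the sequence lives in $D^{s,p}_0(\Omega_n)$, and the duality pairing with $I_\infty'$) is stated for sequences in $D^{s,p}_0(\Omega)$. The paper's cut-off $\zeta((\cdot-z_n)/\varrho_n)$ in Lemma~\ref{proof5} is there precisely to keep the iterated sequences in the right space; you should keep it as well.
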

\begin{remark}[Radial case for $N=1$]
The previous results guarantees that, under the standing assumptions, a radial Palais-Smale sequence can concentrate only at the origin. This is due to the fact that functions in $D^{s,p}_{0,\mathrm{rad}}$ verify some extra compactness properties on annular regions $\mathcal{A}_{R_0,R_1}=\{x\, :\, R_0<|x|<R_1\}$, which go up to the exponent $p^*_s$ (and even beyond). More precisely, we have compactness of the embeddings
\[
D^{s,p}_{0,\mathrm{rad}}(B)\hookrightarrow L^{q}(\mathcal{A}_{R_0,R_1}),\qquad p^*_s\le q<p^\#_s,
\] 
where $p^\#_s$ is the critical Sobolev exponent in dimension $N=1$.
As for $N=1$ we have $p^*_s=p^\#_s$, compactness ceases to be true for $s\,p<N=1$ (see Proposition \ref{prop:radembedding} and Remark \ref{oss:boh!}). In the one-dimensional case, in Theorem \ref{radial-case} one would need ({\bf NA}) as above.
\end{remark}
We point out that, contrary to \cite{MW,willem-b}, on the weight function $a$ we merely assume it to be in $L^{N/sp}(\Omega)$, avoiding an additional
coercivity assumption (see \cite[condition (B), p.125]{willem-b})
which was used in \cite{MW,willem-b} to get the boundedness of the Palais-Smale sequence $\{u_n\}_{n\in\mathbb{N}}$.
\par
The proof by Struwe in \cite{struwe} is essentially based upon
iterated rescaling arguments, jointly with an extension procedure to show the non-triviality of the weak limits. The latter seems
hard to adapt to the nonlocal cases, namely when $s>0$ is not integer.
Thus we prove Theorem~\ref{Theorem3} by basically following the scheme of Clapp's paper \cite{clapp}. A delicate point will be proving that the weak limits appearing in the construction are non-trivial.
As a main ingredient, we use a Caccioppoli inequality for solutions of $(-\Delta)_p^s u=f$ (see Proposition~\ref{prop:caccioloc} below).
\begin{remark}[The case $p=2$]
In the Hilbertian setting, namely for $p=2$ and $0<s<N/2$, Theorem \ref{Theorem3} has been recently proved in \cite{palpis} by appealing to the so-called
{\em profile decomposition} of Gerard, see \cite{gerard}. The latter is a general result describing the compactness defects of general bounded sequences in $D^{s,2}_0(\mathbb{R}^N)$, which are not necessarily Palais-Smale sequences of some energy functional. See also \cite[Theorem 1.4]{palpis-0}, where
some improved fractional Sobolev embeddings are obtained.
We point out that for $p\not=2$ such an approach does not seem feasible. Indeed, the paper \cite{jaffard} suggests that the decomposition \eqref{decomposizione} should not be expected for a generic bounded sequence in $D^{s,p}_0(\mathbb{R}^N)$ (see \cite[page 387]{jaffard}).
We also observe that some form of the global compactness result of \cite{palpis}
was also derived in \cite{sss} in the study of Coron-type results in the fractional case.
\end{remark}

\begin{remark}
We also consider a version of the above theorem 
stated for Palais-Smale sequences with sign, namely Palais-Smale sequences $\{u_n\}_{n\in\mathbb{N}}$ 
with the additional property that the negative parts $\{(u_n)_-\}_{n\in\mathbb{N}}$ converges to zero in $L^{p^*_s}$.
This is particularly interesting if $c$ is a {\em minimax} type level (i.e.\ with mountain pass, saddle point or linking geometry). 
Indeed, in this case it is often possible to obtain
a Palais-Smale sequence with sign at level $c$ via deformation arguments
of Critical Point Theory, see \cite[Theorem 2.8]{willem-b}. 
\end{remark}

\vskip8pt
\subsection{Notations}
\label{notations}
For $1<p<\infty$ we consider the monotone function $J_p:\mathbb{R}^N\to\mathbb{R}^N$ defined by
\[
J_p(\xi):=|\xi|^{p-2}\,\xi,\qquad \xi\in\mathbb{R}^N.
\]
We recall that this satisfies
\begin{equation}
\label{lipschitz}
|J_p(\xi)-J_p(\eta)|\le \left\{\begin{array}{ll}
|\xi-\eta|^{p-1},&\mbox{ if } 1<p\le 2,\\
&\\
C_p\, (|\xi|+|\eta|)^{p-2}\,|\xi-\eta|,& \mbox{ if } p>2.
\end{array}
\right.
\end{equation}
We denote by $B_r(x_0)$ the $N-$dimensional open ball of radius $r$, centered at a point $x_0\in\mathbb{R}^N$.
The symbol $\|\cdot\|_{L^p(\Omega)}$ stands for the standard norm for the $L^p(\Omega)$ space. For a measurable function $u:\mathbb{R}^N\to\mathbb{R}$, we let
\[
[u]_{D^{s,p}(\mathbb{R}^N)} := \left(\int_{\mathbb{R}^{2N}} \frac{|u(x) - u(y)|^p}{|x - y|^{N+sp}}\, dx dy\right)^{1/p}
\]
be its Gagliardo seminorm. For $s\,p<N$, we consider the space
$$
D^{s,p}_0(\mathbb{R}^N) := \big\{u \in L^{p^*_s}(\mathbb{R}^N) : [u]_{D^{s,p}(\mathbb{R}^N)}<\infty\big\},\qquad \mbox{ where } p^*_s=\frac{N\,p}{N-s\,p},
$$
endowed with norm $[\,\cdot\,]_{D^{s,p}(\mathbb{R}^N)}$. If $\Omega\subset\mathbb{R}^N$ is an open set, not necessarily bounded, we consider
\[
D^{s,p}_0(\Omega) := \big\{u \in D^{s,p}_0(\mathbb{R}^N): \text{$u=0$ in $\mathbb{R}^N\setminus\Omega$}\big\},
\]
If $\Omega$ is bounded, then the imbedding $D^{s,p}_0(\Omega)\hookrightarrow L^r(\Omega)$ is continuous
for $1\le r \le p_s^\ast$ and compact for $1\le r< p_s^\ast$.
The space $D^{s,p}_0(\Omega)$ can be equivalently defined as the completion
of $C^\infty_0(\Omega)$ in the norm $[\,\cdot\,]_{D^{s,p}(\mathbb{R}^N)}$, provided $\partial\Omega$ is smooth enough.
Finally, we shall denote the localized Gagliardo seminorm by
$$
[u]_{D^{s,p}(\Omega)}:
=\left(\int_{\Omega\times\Omega} \frac{|u(x)-u(y)|^p}{|x-y|^{N+s\,p}}\,dx\,dy\right)^{1/p}.
$$
\vskip12pt
\noindent
{\bf Acknowledgments.}\
L.B. and M.S.\ are members of the Gruppo Nazionale per l'Analisi Matematica, la Probabilit\`a
e le loro Applicazioni (GNAMPA) of the Istituto Nazionale di Alta Matematica (INdAM).
Y.Y.\ was supported by NSFC (No.\ 11501252, 11571176), Tian Yuan Special Foundation (No.\ 11226116),
Natural Science Foundation of Jiangsu Province of China for Young Scholars (No.\ BK2012109).
Part of this manuscript was written during a visit of M.S.\ at the University of
Ferrara in November 2015 and a subsequent visit of L.B. at the University of Verona in
February 2016.\ The hosting institutions and their facilities are gratefully acknowledged.

\section{Preliminary results}

\subsection{Br\'ezis-Lieb type properties}
\noindent
We first recall the following result (see \cite[Theorem 1]{BL} and \cite[Lemma 3.2]{MW}).

\begin{lemma}
\label{proof1}
Let $1<q<\infty$ and let
$\{f_n\}_{n\in\mathbb{N}}\subset L^q(\mathbb{R}^k)$ be a bounded sequence, such that $f_n\to f$ almost everywhere. 
Then
$$
\lim_{n\to\infty}\Big(\|f_n\|_{L^q(\mathbb{R}^k)}^q-\|f_n-f\|_{L^q(\mathbb{R}^k)}^q\Big)=\|f\|_{L^q(\mathbb{R}^k)}^q.
$$
Furthermore,
\begin{equation}
\label{Jp}
\lim_{n\to\infty}\int_{\mathbb{R}^k} \big|J_q(f_n)-J_q(f_n-f)-J_q(f)\big|^{q'}\,dx=0.
\end{equation}
\end{lemma}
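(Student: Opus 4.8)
The first identity is the classical Br\'ezis--Lieb lemma, and the plan is to prove it by the usual Fatou-type truncation. First note that $f\in L^q(\mathbb{R}^k)$: by Fatou's lemma $\int_{\mathbb{R}^k}|f|^q\,dx\le\liminf_n\int_{\mathbb{R}^k}|f_n|^q\,dx<\infty$. The elementary ingredient is that for every $\varepsilon>0$ there is $C_\varepsilon>0$ such that
\[
\big|\,|a+b|^q-|a|^q\,\big|\le \varepsilon\,|a|^q+C_\varepsilon\,|b|^q,\qquad a,b\in\mathbb{R},
\]
which follows from the $q$-homogeneity of both sides together with the continuity of $t\mapsto |1+t|^q-1$ near $t=0$ (treating separately the cases $|b|\le|a|$ and $|b|>|a|$). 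Writing $a=f_n-f$ and $b=f$, I would set
\[
W_{n,\varepsilon}:=\Big(\big|\,|f_n|^q-|f_n-f|^q-|f|^q\,\big|-\varepsilon\,|f_n-f|^q\Big)_+ .
\]
Then $0\le W_{n,\varepsilon}\le(1+C_\varepsilon)\,|f|^q\in L^1(\mathbb{R}^k)$ and $W_{n,\varepsilon}\to0$ a.e. (since $|f_n|^q\to|f|^q$ and $|f_n-f|^q\to0$ a.e.), so $\int_{\mathbb{R}^k}W_{n,\varepsilon}\,dx\to0$ by dominated convergence. Since $\big|\,|f_n|^q-|f_n-f|^q-|f|^q\,\big|\le W_{n,\varepsilon}+\varepsilon\,|f_n-f|^q$ and $\{f_n\}$ is bounded in $L^q$, one gets $\limsup_n\int_{\mathbb{R}^k}\big|\,|f_n|^q-|f_n-f|^q-|f|^q\,\big|\,dx\le C\,\varepsilon$, and letting $\varepsilon\to0$ yields the first claim.

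For \eqref{Jp} the plan is to run the same scheme, provided one first establishes a pointwise bound
\[
\big|J_q(a+b)-J_q(a)-J_q(b)\big|^{q'}\le \varepsilon\,|a|^q+C_\varepsilon\,|b|^q,\qquad a,b\in\mathbb{R},
\]
for every $\varepsilon>0$. Here the two regimes of \eqref{lipschitz} enter. If $1<q\le2$, then $|J_q(a+b)-J_q(a)|\le|b|^{q-1}$, hence $\big|J_q(a+b)-J_q(a)-J_q(b)\big|\le 2\,|b|^{q-1}$ and, raising to the power $q'$ and using $(q-1)\,q'=q$, one even gets the clean domination by $2^{q'}|b|^q$, so no $\varepsilon$ is needed. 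If $q>2$, the second line of \eqref{lipschitz} gives $|J_q(a+b)-J_q(a)|\le C_q\,(|a|+|b|)^{q-2}|b|\le C_q'\,(|a|^{q-2}|b|+|b|^{q-1})$, whence $\big|J_q(a+b)-J_q(a)-J_q(b)\big|^{q'}\le C\,\big(|a|^{(q-2)q'}|b|^{q'}+|b|^{q}\big)$; since $(q-2)\,q'+q'=(q-1)\,q'=q$, a weighted Young inequality on the first term (with exponents $q/((q-2)q')$ and $q/q'$, the former being $>1$ because $(q-2)/(q-1)<1$) produces the displayed bound with an arbitrarily small coefficient on $|a|^q$.

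With this inequality available, the proof of \eqref{Jp} is then identical to the one above: one observes that $J_q(f_n)-J_q(f_n-f)-J_q(f)\to0$ a.e. (because $f_n-f\to0$ a.e. and $J_q$ is continuous with $J_q(0)=0$), truncates by $\varepsilon\,|f_n-f|^q$, applies dominated convergence to the resulting truncation (dominated by $C_\varepsilon|f|^q\in L^1$), and lets $\varepsilon\to0$ using the boundedness of $\{f_n-f\}$ in $L^q$. The only genuinely delicate point is the superquadratic pointwise inequality for $J_q$: one must split $J_q(a+b)-J_q(a)$ via \eqref{lipschitz} and then verify that, after raising to the power $q'$, the homogeneities match so that Young's inequality produces \emph{exactly} $|a|^q$ and $|b|^q$; the subquadratic range is comparatively soft thanks to the H\"older-type bound $|J_q(a+b)-J_q(a)|\le|b|^{q-1}$.
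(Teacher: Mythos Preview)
Your argument is correct. The paper does not actually supply a proof of this lemma: it simply records the statement and cites \cite{BL} for the first identity and \cite{MW} for \eqref{Jp}. What you wrote is precisely the standard proof one finds in those references---the Fatou/dominated-convergence truncation with the scale-invariant pointwise inequality $\big||a+b|^q-|a|^q\big|\le\varepsilon|a|^q+C_\varepsilon|b|^q$, and its analogue for $J_q$ obtained from \eqref{lipschitz} together with Young's inequality in the superquadratic range. The homogeneity bookkeeping you carry out for $q>2$ (the conjugate pair $q/((q-2)q')=(q-1)/(q-2)$ and $q/q'=q-1$) is exactly what is needed, and the subquadratic case is indeed immediate from the first line of \eqref{lipschitz}. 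There is nothing to add.
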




\noindent
The previous result implies the following splitting properties.

\begin{lemma}
\label{proof2}
Let $\{u_n\}_{n\in\mathbb{N}}\subset D^{s,p}_0(\mathbb{R}^N)$ be such that $u_n\rightharpoonup u$ in $D^{s,p}_0(\mathbb{R}^N)$ and $u_n\to u$ 
almost everywhere, as $n\to\infty$. Then:
\begin{properties}{i}
\item\label{i1} $[u_n]^p_{D^{s,p}(\mathbb{R}^N)}-[u_n-u]^p_{D^{s,p}(\mathbb{R}^N)}=[u]^p_{D^{s,p}(\mathbb{R}^N)}+o_n(1);$
\vskip.2cm
\item\label{i2} $J_{p^*_s}(u_n)-J_{p^*_s}(u_n-u)\to J_{p^*_s}(u)$,\quad \hbox{in $L^{(p^*_s)'}(\mathbb{R}^N)$};
\vskip.2cm
\item \label{i3} it holds
\begin{multline*}
\frac{J_p(u_n(x)-u_n(y))}{|x-y|^{\frac{N+sp}{p'}}}-\frac{J_p\big((u_n(x)-u(x))-(u_n(y)-u(y))\big)}{|x-y|^{\frac{N+sp}{p'}}}
\to \frac{J_p(u(x)-u(y))}{|x-y|^{\frac{N+sp}{p'}}} \quad \hbox{in $L^{p'}(\mathbb{R}^{2N})$}.
\end{multline*}
\end{properties}
\end{lemma}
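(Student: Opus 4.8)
The plan is to deduce Lemma~\ref{proof2} from the Br\'ezis--Lieb type statements of Lemma~\ref{proof1} by choosing the right auxiliary sequences. For \ref{i2}, set $f_n=u_n$, $f=u$, $q=p^*_s$, $k=N$: since $u_n\rightharpoonup u$ in $D^{s,p}_0(\mathbb{R}^N)$ the sequence is bounded in $D^{s,p}_0(\mathbb{R}^N)$, hence by the Sobolev embedding $D^{s,p}_0(\mathbb{R}^N)\hookrightarrow L^{p^*_s}(\mathbb{R}^N)$ it is bounded in $L^{p^*_s}(\mathbb{R}^N)$; together with $u_n\to u$ a.e., formula \eqref{Jp} applies verbatim and yields exactly \ref{i2}, because $(p^*_s)'=q'$ and $J_{p^*_s}=J_q$.

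For \ref{i3} the idea is to "fold'' the double integral into a single $L^{p'}$ statement over $\mathbb{R}^{2N}$ equipped with the measure $d\mu=|x-y|^{-(N+sp)}\,dx\,dy$, or equivalently to test Lemma~\ref{proof1} on $\mathbb{R}^k$ with $k=2N$ after a change of variables that absorbs the kernel. Concretely, for $(x,y)\in\mathbb{R}^{2N}$ put
\[
F_n(x,y):=\frac{u_n(x)-u_n(y)}{|x-y|^{\frac{N+sp}{p}}},\qquad F(x,y):=\frac{u(x)-u(y)}{|x-y|^{\frac{N+sp}{p}}}.
\]
Then $\{F_n\}$ is bounded in $L^p(\mathbb{R}^{2N})$ precisely because $\|F_n\|_{L^p(\mathbb{R}^{2N})}^p=[u_n]^p_{D^{s,p}(\mathbb{R}^N)}$ is bounded, and $F_n\to F$ a.e.\ on $\mathbb{R}^{2N}$ since $u_n\to u$ a.e.\ on $\mathbb{R}^N$ (the exceptional set for $(x,y)$ has measure zero by Fubini). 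Applying \eqref{Jp} with $q=p$, $k=2N$ gives
\[
\lim_{n\to\infty}\int_{\mathbb{R}^{2N}}\bigl|J_p(F_n)-J_p(F_n-F)-J_p(F)\bigr|^{p'}\,dx\,dy=0.
\]
It remains to observe the homogeneity identity $J_p(F_n(x,y))=J_p\!\bigl(u_n(x)-u_n(y)\bigr)\,|x-y|^{-\frac{N+sp}{p}\,(p-1)}$ and that $\frac{N+sp}{p}(p-1)=\frac{N+sp}{p'}$, so that $J_p(F_n)=J_p(u_n(x)-u_n(y))\,|x-y|^{-\frac{N+sp}{p'}}$; the same for $J_p(F)$ and, using $F_n-F=\dfrac{(u_n(x)-u(x))-(u_n(y)-u(y))}{|x-y|^{(N+sp)/p}}$, for $J_p(F_n-F)$. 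Substituting these three identities into the displayed limit is exactly the assertion \ref{i3}.

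Finally \ref{i1} follows from the first (norm) part of Lemma~\ref{proof1} applied again to $\{F_n\}$ on $\mathbb{R}^{2N}$ with $q=p$: indeed $\|F_n\|_{L^p}^p=[u_n]^p_{D^{s,p}(\mathbb{R}^N)}$, $\|F_n-F\|_{L^p}^p=[u_n-u]^p_{D^{s,p}(\mathbb{R}^N)}$ and $\|F\|_{L^p}^p=[u]^p_{D^{s,p}(\mathbb{R}^N)}$, so the conclusion $\|F_n\|_{L^p}^p-\|F_n-F\|_{L^p}^p\to\|F\|_{L^p}^p$ is precisely \ref{i1} after rewriting $o_n(1)$ for the vanishing difference. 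The only genuinely non-routine point is the a.e.\ convergence $F_n\to F$ on $\mathbb{R}^{2N}$ together with the $L^p(\mathbb{R}^{2N})$ boundedness, i.e.\ making sure the kernel $|x-y|^{-(N+sp)}$ is harmlessly absorbed into the base measure; once that reinterpretation is in place, everything reduces to a direct citation of Lemma~\ref{proof1}, so I expect no serious obstacle.
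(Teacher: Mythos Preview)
Your proof is correct and follows essentially the same approach as the paper: both reduce each statement to a direct application of Lemma~\ref{proof1} with the choices $f_n=F_n$, $q=p$, $k=2N$ for \ref{i1} and \ref{i3}, and $f_n=u_n$, $q=p^*_s$, $k=N$ for \ref{i2}. You supply a bit more detail (the Fubini argument for a.e.\ convergence on $\mathbb{R}^{2N}$ and the homogeneity check $J_p(F_n)=J_p(u_n(x)-u_n(y))\,|x-y|^{-(N+sp)/p'}$), but the strategy is identical.
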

\begin{proof}
Statement \ref{i1} follows by Lemma \ref{proof1} by choosing
\[
f_n=\frac{u_n(x)-u_n(y)}{|x-y|^{\frac{N+s\,p}{p}}},\qquad f=\frac{u(x)-u(y)}{|x-y|^{\frac{N+s\,p}{p}}},\qquad q=p,\qquad k=2\,N.
\] 
With the same choices, we can also obtain \ref{i3} from \eqref{Jp}.
Statement \ref{i2}  directly follows from \eqref{Jp} with the choices
\[
f_n=u_n,\qquad f=u,\qquad q=p^*_s,\qquad k=N,
\] 
once we recalled that a weakly convergent sequence in $D^{s,p}_0(\mathbb{R}^N)$ weakly converges in $L^{p^*_s}(\mathbb{R}^N)$ as well, thanks to Sobolev inequality.
This concludes the proof.
\end{proof}
\noindent
Let $I$ and $I_\infty$ be the functionals defined by \eqref{I} and \eqref{Infty}. We recall that $I\in C^1(D^{s,p}_0(\Omega))$, $I_\infty\in C^1(D^{s,p}_0(\H))$ and 
\begin{align*}
\langle I'(u),\varphi\rangle &=
\int_{\mathbb{R}^{2N}} \frac{J_p(u(x) - u(y))\, (\varphi(x) - \varphi(y))}{|x - y|^{N+s\,p}}\, dx\, dy \\
&+\int_\Omega a\,|u|^{p-2}\,u\,\varphi\, dx- \mu\int_\Omega |u|^{p_s^\ast -2}\,u\,\varphi\, dx,\qquad \forall \varphi \in D^{s,p}_0(\Omega), \\
\langle I_\infty'(u),\varphi\rangle &=
\int_{\mathbb{R}^{2N}} \frac{J_p(u(x) - u(y))\, (\varphi(x) - \varphi(y))}{|x - y|^{N+s\,p}}\, dx\, dy \\
&-\mu\int_{\mathbb{R}^N} |u|^{p_s^\ast -2}\,u\,\varphi\, dx, \qquad \forall \varphi\in D^{s,p}_0(\mathcal{H}).
\end{align*}

\noindent
In the following, we repeatedly use the inclusion
$D_0^{s,p}(\Omega)\hookrightarrow D^{s,p}_0(\mathbb{R}^N)$.


\begin{lemma}
\label{elem-b}
Let $a\in L^{N/sp}(\Omega)$, assume that $\{u_n\}_{n\in\mathbb{N}}$ is bounded in $L^{p^*_s}(\Omega)$ and that $u_n\to u$ almost everywhere in $\Omega$. Then
\[
\lim_{n\to\infty}\Big\|a\,\big(J_p(u_n)-J_p(u)\big)\Big\|_{L^{(p^*_s)'}(\Omega)}=0.
\]
\end{lemma}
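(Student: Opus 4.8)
The plan is to reduce the claim to an application of the dominated convergence theorem in the space $L^{(p^*_s)'}(\Omega)$, exploiting that multiplication by the weight $a$ behaves like a compact perturbation because $a\in L^{N/sp}(\Omega)$, which is exactly the dual-scaling exponent. First I would record the pointwise convergence: since $u_n\to u$ a.e.\ and $J_p$ is continuous, we have $J_p(u_n)-J_p(u)\to 0$ a.e.\ in $\Omega$, hence also $a\,(J_p(u_n)-J_p(u))\to 0$ a.e. So the only issue is equi-integrability of the sequence $|a|^{(p^*_s)'}\,|J_p(u_n)-J_p(u)|^{(p^*_s)'}$ in $L^1(\Omega)$; this is where a Vitali-type argument is needed.

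Next I would set up the exponents. Note $J_p(u_n)=|u_n|^{p-2}u_n$ has $|J_p(u_n)|=|u_n|^{p-1}$, and the sequence $\{u_n\}$ is bounded in $L^{p^*_s}(\Omega)$, so $\{|J_p(u_n)|^{(p^*_s)'}\}$ is bounded in $L^{r}$ for $r=\tfrac{p^*_s}{(p-1)(p^*_s)'}$; one checks that $r>1$ precisely because $p-1<p^*_s-1$, i.e.\ $(p-1)(p^*_s)'<p^*_s$. Then I apply Hölder with the pair $\big(r,\ r'\big)$ to the product $|a|^{(p^*_s)'}\cdot |J_p(u_n)-J_p(u)|^{(p^*_s)'}$ over an arbitrary measurable set $E\subset\Omega$: the second factor is controlled uniformly in $n$ by the $L^{p^*_s}$-bound, and the first factor contributes $\big(\int_E |a|^{(p^*_s)'r'}\big)^{1/r'}$. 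A short computation gives $(p^*_s)'\,r' = N/(sp)$, so $\int_E|a|^{(p^*_s)'r'}dx=\int_E|a|^{N/sp}dx$, which tends to $0$ uniformly as $|E|\to0$ by absolute continuity of the integral, since $a\in L^{N/sp}(\Omega)$. This yields uniform equi-integrability of $\{a\,(J_p(u_n)-J_p(u))\}$ in $L^{(p^*_s)'}(\Omega)$.

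With a.e.\ convergence to $0$ and equi-integrability in hand, the Vitali convergence theorem (on the finite-measure set $\Omega$, or after a standard tightness remark if one prefers to be careful about $\Omega$ being only bounded) gives $\|a\,(J_p(u_n)-J_p(u))\|_{L^{(p^*_s)'}(\Omega)}\to 0$, which is the assertion. An alternative, essentially equivalent route is to invoke the generalized dominated convergence / Brezis–Lieb type lemma: split $a\,(J_p(u_n)-J_p(u)) = a\,\big(J_p(u_n)-J_p(u_n-u)-J_p(u)\big) + a\,J_p(u_n-u)$, handle the first bracket via \eqref{Jp} of Lemma \ref{proof1} (with $q=p^*_s$, after the Hölder step that moves $|a|$ aside), and show the second term $\to0$ because $u_n-u\rightharpoonup0$ weakly in $L^{p^*_s}$ while multiplication by $|a|^{(p^*_s)'}$ is compact from $L^{p^*_s/(p-1)}$-type scaling — but the direct Vitali argument is cleaner.

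The main obstacle is the bookkeeping of exponents: one must verify carefully that the Hölder conjugate arising from the $L^{p^*_s}$-boundedness of $\{|u_n|^{p-1}\}$ matches the Lebesgue exponent $N/sp$ of $a$ exactly, i.e.\ that $\tfrac{1}{(p^*_s)'}=\tfrac{sp}{N}+\tfrac{p-1}{p^*_s}$, which is the identity that makes the whole estimate scale-invariant and is the reason $L^{N/sp}$ is the natural hypothesis on $a$. Once that arithmetic is in place, no further subtlety arises: continuity of $J_p$ and the boundedness assumption do all the remaining work.
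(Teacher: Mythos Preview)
Your proof is correct and rests on exactly the same H\"older splitting as the paper: writing $\psi:=|a|^{(p^*_s)'}\in L^\sigma(\Omega)$ and $\phi_n:=|J_p(u_n)-J_p(u)|^{(p^*_s)'}$ bounded in $L^{\sigma'}(\Omega)$, with $\sigma=\dfrac{N(p-1)+s\,p}{s\,p^2}$ (your $r'$). The difference is only in the finishing step. You invoke the Vitali convergence theorem, deriving equi-integrability (and tightness, if $|\Omega|=\infty$) from absolute continuity of $\int_E|a|^{N/sp}\,dx$. The paper instead argues directly: from Young's inequality $\psi\,\phi_n\le \dfrac{1}{\sigma\tau^{\sigma-1}}\psi^\sigma+\dfrac{\sigma-1}{\sigma}\tau\,\phi_n^{\sigma'}$ and Fatou's lemma applied to the nonnegative integrand $\dfrac{1}{\sigma\tau^{\sigma-1}}\psi^\sigma+\dfrac{\sigma-1}{\sigma}\tau\,\phi_n^{\sigma'}-\psi\,\phi_n$, one gets $\limsup_n\int\psi\,\phi_n\le \dfrac{\sigma-1}{\sigma}\tau\,\sup_n\|\phi_n\|_{L^{\sigma'}}^{\sigma'}$ for every $\tau>0$. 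Your route is the textbook one and handles unbounded $\Omega$ cleanly via the tightness remark you make; the paper's route avoids naming Vitali but is equivalent in spirit, and makes no explicit tightness step (which is harmless here since the same H\"older bound over $\Omega\setminus F$ supplies it). Your closing exponent identity $\dfrac{1}{(p^*_s)'}=\dfrac{s\,p}{N}+\dfrac{p-1}{p^*_s}$ is precisely the computation the paper encodes in the value of $\sigma$.
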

\begin{proof}
Let us set 
\[
\psi:=|a|^{(p^*_s)'}\in L^{\sigma}(\Omega), \qquad \mbox{ with } \sigma=\frac{Np-N+s\,p}{s\,p^2}>1,
\]
and 
\[
\phi_n:=|J_p(u_n)-J_p(u)|^{(p^*_s)'}\subset  L^\frac{\sigma}{\sigma-1}(\Omega).
\]
It is not difficult to see that $\{\phi_n\}_{n\in\mathbb{N}}$ is bounded in $L^{\sigma/(\sigma-1)}(\Omega)$ and converges to $0$ almost everywhere in $\Omega$, thanks to the assumptions on $\{u_n\}_{n\in\mathbb{N}}$. Thus we obtain
\[
\begin{split}
\lim_{n\to\infty}\int_\Omega \left|a\,\big(J_p(u_n)-J_p(u)\big)\right|^{(p^*_s)'}\,dx&=\lim_{n\to\infty}\int_\Omega \psi\,\phi_n\,dx,
\end{split}
\]
and the last limit is zero. Indeed, by Young inequality and Fatou Lemma for every $0<\tau\ll 1$,
\[
\begin{split}
\frac{1}{\sigma\,\tau^{\sigma-1}}\,\int_\Omega\psi^\sigma\,dx&\le \liminf_{n\to\infty}\int_\Omega \left[\frac{1}{\sigma\,\tau^{\sigma-1}}\,\psi^\sigma+\frac{\sigma-1}{\sigma}\,\tau\,\phi_n^\frac{\sigma}{\sigma-1}-\psi\,\phi_n\right]\,dx\\
&\le  \frac{1}{\sigma\,\tau^{\sigma-1}}\,\int_\Omega\,\psi^\sigma\,dx+\frac{\sigma-1}{\sigma}\,\tau\,\left(\sup_{n\in\mathbb{N}}\int_\Omega \phi_n^\frac{\sigma}{\sigma-1}\,dx\right)-\limsup_{n\to\infty} \int_\Omega \psi\,\phi_n\,dx.
\end{split}
\]
This proves
\[
0\le \limsup_{n\to\infty} \int_\Omega \psi\,\phi_n\,dx\le \frac{\sigma-1}{\sigma}\,\tau\,\left(\sup_{n\in\mathbb{N}}\int_\Omega \phi_n^\frac{\sigma}{\sigma-1}\,dx\right),
\]
and by the arbitrariness of $\tau>0$, we get the conclusion.
\end{proof}

\noindent
Next we produce a Palais-Smale sequence for $I_\infty$ from a Palais-Smale sequence for $I$.
\begin{lemma}\label{proof4}
Let $\{u_n\}_{n\in\mathbb{N}}\subset D^{s,p}_0(\Omega)$ be a Palais-Smale sequence for $I$ at the level $c$. Assume that 
\begin{equation}
\label{daluno}
u_n\rightharpoonup u\ \mbox{ in }D_0^{s,p}(\Omega)\qquad \mbox{ and }\qquad u_n\to u\ \mbox{ a.\,e. in }\Omega.
\end{equation}
Then, passing if necessary to a subsequence, $\{v_n\}_{n\in\mathbb{N}}:=\{u_n-u\}_{n\in\mathbb{N}}\subset D^{s,p}_0(\Omega)$ is a Palais-Smale sequence for the functional $I_\infty$ at the level $c-I(u)$. Moreover, we have
\begin{equation}
\label{sottraispezza}
[v_n]^p_{D^{s,p}(\mathbb{R}^N)}=[u_n]^p_{D^{s,p}(\mathbb{R}^N)}-[u]^p_{D^{s,p}(\mathbb{R}^N)}+o_n(1).
\end{equation}
\end{lemma}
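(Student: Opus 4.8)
\textbf{Proof proposal for Lemma \ref{proof4}.}

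The plan is to verify the three claims in turn: the energy convergence $I_\infty(v_n)\to c-I(u)$, the derivative convergence $I_\infty'(v_n)\to 0$ in $D^{-s,p'}(\mathbb{R}^N)$, and the seminorm splitting \eqref{sottraispezza}. The splitting \eqref{sottraispezza} is immediate: it is precisely statement \ref{i1} of Lemma \ref{proof2}, applicable since $\{u_n\}$ is bounded in $D^{s,p}_0(\Omega)$ (as a Palais-Smale sequence, using the Sobolev inequality and $a\in L^{N/sp}$ to control the energy from below — this is where we avoid the coercivity assumption of \cite{MW,willem-b}) and satisfies \eqref{daluno}. For the energy level, I would write $I_\infty(v_n)=\frac1p[v_n]^p_{D^{s,p}(\mathbb{R}^N)}-\frac{\mu}{p^*_s}\|v_n\|_{L^{p^*_s}}^{p^*_s}$, then apply \eqref{sottraispezza} to the first term and the Brézis-Lieb lemma (the first part of Lemma \ref{proof1}, or equivalently \ref{i2} of Lemma \ref{proof2}) to the second term to get $\|v_n\|_{L^{p^*_s}}^{p^*_s}=\|u_n\|_{L^{p^*_s}}^{p^*_s}-\|u\|_{L^{p^*_s}}^{p^*_s}+o_n(1)$. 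Combining,
\[
I_\infty(v_n)=I(u_n)-\frac1p\int_{\mathbb{R}^N}a\,|u_n|^p\,dx-I(u)+\frac1p\int_{\mathbb{R}^N}a\,|u|^p\,dx+o_n(1),
\]
and the two integrals differ by $o_n(1)$ because $a\in L^{N/sp}(\Omega)$, $|u_n|^p\rightharpoonup|u|^p$ weakly in $L^{p^*_s/p}(\Omega)$ (a.e. convergence plus boundedness), and $|a|$ lies in the predual $L^{(p^*_s/p)'}=L^{N/sp}$. Since $I(u_n)\to c$, this gives $I_\infty(v_n)\to c-I(u)$.

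The derivative claim is the core of the argument. Fix $\varphi\in D^{s,p}_0(\mathbb{R}^N)$; I need $\langle I_\infty'(v_n),\varphi\rangle\to 0$ uniformly over $[\varphi]_{D^{s,p}(\mathbb{R}^N)}\le1$. The natural route is to expand
\[
\langle I_\infty'(v_n),\varphi\rangle=\int_{\mathbb{R}^{2N}}\frac{J_p(v_n(x)-v_n(y))\,(\varphi(x)-\varphi(y))}{|x-y|^{N+sp}}\,dx\,dy-\mu\int_{\mathbb{R}^N}J_{p^*_s}(v_n)\,\varphi\,dx,
\]
and compare it with $\langle I'(u_n),\varphi\rangle-\langle I'(u),\varphi\rangle$, which tends to $0$ since $\langle I'(u_n),\varphi\rangle\to0$ (Palais-Smale) and $\langle I'(u),\varphi\rangle=0$ — here one should note $u$ solves the limiting equation weakly, which itself follows from passing to the limit in $\langle I'(u_n),\varphi\rangle$ using exactly the a.e.\ convergence and the compactness statements below. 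For the Gagliardo term, statement \ref{i3} of Lemma \ref{proof2} says
\[
\frac{J_p(v_n(x)-v_n(y))}{|x-y|^{(N+sp)/p'}}=\frac{J_p(u_n(x)-u_n(y))}{|x-y|^{(N+sp)/p'}}-\frac{J_p(u(x)-u(y))}{|x-y|^{(N+sp)/p'}}+o_n(1)\quad\text{in }L^{p'}(\mathbb{R}^{2N}),
\]
so pairing against $(\varphi(x)-\varphi(y))/|x-y|^{(N+sp)/p}$ (which has $L^p(\mathbb{R}^{2N})$ norm equal to $[\varphi]_{D^{s,p}(\mathbb{R}^N)}\le1$) and using Hölder, the Gagliardo part of $\langle I_\infty'(v_n),\varphi\rangle$ equals the Gagliardo parts of $\langle I'(u_n),\varphi\rangle$ minus $\langle I'(u),\varphi\rangle$, up to an error that is $o_n(1)$ \emph{uniformly} in $\varphi$. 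For the critical term, statement \ref{i2} gives $J_{p^*_s}(v_n)=J_{p^*_s}(u_n)-J_{p^*_s}(u)+o_n(1)$ in $L^{(p^*_s)'}(\mathbb{R}^N)$, and pairing against $\varphi$ (whose $L^{p^*_s}$ norm is controlled by $[\varphi]_{D^{s,p}}\le1$ via Sobolev) again yields a uniform $o_n(1)$. The only term left over is the weight term $\int a\,J_p(u_n)\,\varphi\,dx$ appearing in $\langle I'(u_n),\varphi\rangle$ but absent from $\langle I_\infty'(v_n),\varphi\rangle$: by Lemma \ref{elem-b}, $a\,(J_p(u_n)-J_p(u))\to0$ in $L^{(p^*_s)'}(\Omega)$, so $\int a\,J_p(u_n)\,\varphi\,dx=\int a\,J_p(u)\,\varphi\,dx+o_n(1)$ uniformly in $\varphi$, and $\int a\,J_p(u)\,\varphi\,dx$ is exactly the weight term in $\langle I'(u),\varphi\rangle$, so it cancels. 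Assembling all pieces,
\[
\langle I_\infty'(v_n),\varphi\rangle=\langle I'(u_n),\varphi\rangle-\langle I'(u),\varphi\rangle+o_n(1)=o_n(1)
\]
uniformly over the unit ball, i.e. $I_\infty'(v_n)\to0$ in $D^{-s,p'}(\mathbb{R}^N)$, and passing to a subsequence (as in the statement) absorbs any need to extract.

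I expect the main obstacle to be \emph{bookkeeping the uniformity in} $\varphi$ rather than any deep new idea: each of Lemmas \ref{proof1}, \ref{proof2}, \ref{elem-b} delivers strong $L^q$-convergence of the relevant nonlinear quantities, and the point is to check that every pairing with $\varphi$ is estimated by Hölder against a factor bounded by $[\varphi]_{D^{s,p}(\mathbb{R}^N)}\le1$ (for the Gagliardo term, the kernel weights split as $(N+sp)/p+(N+sp)/p'=N+sp$; for the other terms, the Sobolev embedding $D^{s,p}_0\hookrightarrow L^{p^*_s}$ does the job), so that all error terms are $o_n(1)$ independently of $\varphi$. A secondary subtlety is justifying that $u$ is a genuine weak solution of the limiting equation $(-\Delta)_p^s u+a|u|^{p-2}u=\mu|u|^{p^*_s-2}u$ in $\Omega$, so that $\langle I'(u),\varphi\rangle=0$; but this is the $n\to\infty$ limit of $\langle I'(u_n),\varphi\rangle=o_n(1)$ using the same convergences (a.e.\ convergence of $u_n$, Lemma \ref{elem-b} for the weight term, and weak-$*$ convergence of $J_{p^*_s}(u_n)$ and of the Gagliardo kernel), so it costs nothing extra.
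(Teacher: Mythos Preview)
Your proof is correct and follows essentially the same approach as the paper: both use the Br\'ezis--Lieb type splittings \ref{i1}--\ref{i3} of Lemma~\ref{proof2} together with Lemma~\ref{elem-b} for the weight term, then identify $I_\infty'(v_n)$ with $I'(u_n)-I'(u)$ up to a uniform $o_n(1)$. The only cosmetic difference is that the paper first writes $I_\infty'(v_n)=I'(v_n)+\hat o_n(1)$ (using $a\,J_p(v_n)\to 0$ in $L^{(p^*_s)'}$) and then splits $I'(v_n)$, whereas you compare directly; also note that the paper states the derivative convergence in $D^{-s,p'}(\Omega)$ rather than $D^{-s,p'}(\mathbb{R}^N)$, which is all that is needed downstream.
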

\begin{proof}
We first observe that \eqref{daluno} readily gives that $I'(u)=0$, i.e.\ $u$ is a critical point of $I$.
By definition and hypothesis \eqref{daluno}, we have that $\{|v_n|^p\}_{n\in{\mathbb N}}$ is bounded in $L^{p^*_s/p}(\Omega)$ and $v_n\to 0$ a.e.\ on $\Omega$. 
Thus it follows that
$|v_n|^p$ converges weakly in $L^{p^*_s/p}(\Omega)$ to $0$.
Since $a\in L^{(p^*_s/p)'}(\Omega)$, we can infer
\[
\lim_{n\to\infty}\int_\Omega a\,|v_n|^pdx=0.
\]
A similar argument, shows that
\[
\int_\Omega a\,|u_n|^pdx=\int_\Omega a\,|u|^pdx+o_n(1).
\]
By \ref{i1} of Lemma \ref{proof2} we also get
\[
[u_n]^p_{D^{s,p}(\mathbb{R}^N)}-[v_n]^p_{D^{s,p}(\mathbb{R}^N)}=[u]^p_{D^{s,p}(\mathbb{R}^N)}+o_n(1),
\] 
which is \eqref{sottraispezza}.
By using the three previous displays and Lemma~\ref{proof1} for $L^{p^*_s}(\Omega)$, we have 
\[
I_\infty(v_n)=I(v_n)+o_n(1)
=I(u_n)-I(u)+o_n(1)
=c-I(u)+o_n(1).
\]
Finally, by virtue Lemma \ref{elem-b} applied to the sequence $u_n-u$, we have
\[
\lim_{n\to\infty}\Big\|a\,J_p(u_n-u)\Big\|_{L^{(p^*_s)'}(\Omega)}=0,
\] 
and thus 
\[
I_\infty'(v_n)=I'(v_n)+\hat o_n(1),
\]
where $\hat o_n (1)$ denotes a sequence going to zero in $D^{-s,p'}(\Omega)$. By using assertions \ref{i2}, \ref{i3} and Lemma \ref{elem-b} we further get
\begin{align*}
I_\infty'(v_n)&=I'(v_n)+\hat o_n(1)
=I'(u_n)-I'(u)+\hat o_n(1)
=\hat o_n(1),
\end{align*}
and $\hat o_n (1)$ still denotes a sequence 
going to zero in $D^{-s,p'}(\Omega)$. This concludes the proof.
\end{proof}

\subsection{Scaling invariance and related facts}

The following result follows from a direct computation, we leave the verification to the reader.
\begin{lemma}[Scaling invariance]
\label{proof7}
For $z\in \Omega$ and $\lambda>0$, we set
\[
\Omega_{z,\lambda}:=\frac{\Omega-z}{\lambda}.
\]
Then, the following facts hold:
\begin{itemize}
\item\label{a1} if $u\in D^{s,p}_0(\Omega)$ and we set
\[
v_{z,\lambda}(x):=\lambda^{\frac{N-s\,p}{p}}\,u(\lambda\, x+z)\in D^{s,p}_0(\Omega_{z,\lambda}),
\]
then $[v_{z,\lambda}]_{D^{s,p}(\mathbb{R}^N)}=[u]_{D^{s,p}(\mathbb{R}^N)}$ and $\|v_{z,\lambda}\|_{L^{p^*_s}(\mathbb{R}^N)}=\|u\|_{L^{p^*_s}(\mathbb{R}^N)}$;
\vskip.2cm
\item\label{a2} if we set
$$
\widetilde w(x):=\lambda^{\frac{s\,p-N}{p}}\,w\left(\frac{x-z}{\lambda}\right),\qquad
\varphi_{z,\lambda}(x):=\lambda^{\frac{N-s\,p}{p}}\,\varphi(\lambda\, x+z),
$$
for $w,\varphi\in D^{s,p}_0(\mathbb{R}^N)$, then $\langle I'_\infty(\widetilde w),\varphi\rangle=\langle I'_\infty(w), \varphi_{z,\lambda}\rangle$ and
$$
\sup_{\varphi\in  D^{s,p}_0(\Omega)}\left|\left\langle I'_\infty(\widetilde w),\frac{\varphi}{[\varphi]_{D^{s,p}(\mathbb{R}^N)}}\right\rangle\right|=
\sup_{\varphi\in  D^{s,p}_0(\Omega_{z,\lambda})}\left|\left\langle I'_\infty(w),\frac{\varphi}{[\varphi]_{D^{s,p}(\mathbb{R}^N)}}\right\rangle\right|. 
$$
\end{itemize}
\end{lemma}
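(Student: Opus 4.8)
The plan is to deduce everything from the single change of variables $x=\lambda\,\xi+z$ (equivalently $\xi=(x-z)/\lambda$), carefully tracking homogeneity exponents and using throughout that $p^*_s=Np/(N-sp)$; beyond the formulas for $I'_\infty$ already recorded, no analytic input is needed. For the first bullet I would first note that $v_{z,\lambda}$ is supported in $\Omega_{z,\lambda}$: if $x\notin(\Omega-z)/\lambda$ then $\lambda x+z\notin\Omega$, so $u(\lambda x+z)=0$. Writing $[v_{z,\lambda}]^p_{D^{s,p}(\mathbb{R}^N)}$ as a double integral, one pulls out the factor $\lambda^{N-sp}$ coming from $|v_{z,\lambda}(x)-v_{z,\lambda}(y)|^p=\lambda^{N-sp}\,|u(\lambda x+z)-u(\lambda y+z)|^p$ and then substitutes $x'=\lambda x+z$, $y'=\lambda y+z$, which contributes $\lambda^{-2N}$ from the volume elements and $\lambda^{N+sp}$ from $|x-y|^{-(N+sp)}=\lambda^{N+sp}\,|x'-y'|^{-(N+sp)}$; since $(N-sp)-2N+(N+sp)=0$, this yields $[v_{z,\lambda}]_{D^{s,p}(\mathbb{R}^N)}=[u]_{D^{s,p}(\mathbb{R}^N)}$. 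The same substitution in $\int|u(\lambda x+z)|^{p^*_s}\,dx$ gives $\lambda^{-N}$, while the prefactor is $\lambda^{(N-sp)p^*_s/p}$ with $(N-sp)\,p^*_s/p=N$ precisely because $p^*_s=Np/(N-sp)$, so $\|v_{z,\lambda}\|_{L^{p^*_s}(\mathbb{R}^N)}=\|u\|_{L^{p^*_s}(\mathbb{R}^N)}$.

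For the second bullet, put $\alpha=(sp-N)/p$, so that $\widetilde w(x)=\lambda^\alpha w((x-z)/\lambda)$ and $\varphi_{z,\lambda}(x)=\lambda^{-\alpha}\varphi(\lambda x+z)$. In the first integral of $\langle I'_\infty(\widetilde w),\varphi\rangle$ I would use $J_p(\widetilde w(x)-\widetilde w(y))=\lambda^{\alpha(p-1)}\,J_p\big(w(\tfrac{x-z}{\lambda})-w(\tfrac{y-z}{\lambda})\big)$ (legitimate since $\lambda>0$), substitute $\xi=(x-z)/\lambda$, $\eta=(y-z)/\lambda$ (volume elements give $\lambda^{2N}$, the kernel gives $\lambda^{-(N+sp)}$), and check that $\alpha(p-1)+2N-(N+sp)=-\alpha$; writing $\varphi(\lambda\xi+z)=\lambda^{\alpha}\varphi_{z,\lambda}(\xi)$ then makes the net power of $\lambda$ equal to $1$, so this term equals the first term of $\langle I'_\infty(w),\varphi_{z,\lambda}\rangle$. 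In the lower-order term $-\mu\int|\widetilde w|^{p^*_s-2}\widetilde w\,\varphi\,dx$ the same substitution produces the power $\lambda^{\alpha(p^*_s-1)+N}$, and using $p^*_s-1=(Np-N+sp)/(N-sp)$ one finds $\alpha(p^*_s-1)+N=-\alpha$, so again $\varphi(\lambda\xi+z)=\lambda^\alpha\varphi_{z,\lambda}(\xi)$ cancels all factors of $\lambda$. Adding the two terms gives $\langle I'_\infty(\widetilde w),\varphi\rangle=\langle I'_\infty(w),\varphi_{z,\lambda}\rangle$.

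For the equality of the two suprema, note that $\varphi\mapsto\varphi_{z,\lambda}$ is precisely the map $u\mapsto v_{z,\lambda}$ of the first bullet, hence a bijection of $D^{s,p}_0(\Omega)$ onto $D^{s,p}_0(\Omega_{z,\lambda})$ (its inverse being $\psi\mapsto\widetilde\psi=\lambda^\alpha\psi((\cdot-z)/\lambda)$, which is supported in $\Omega$ whenever $\psi$ is supported in $\Omega_{z,\lambda}$) and, crucially, $[\varphi_{z,\lambda}]_{D^{s,p}(\mathbb{R}^N)}=[\varphi]_{D^{s,p}(\mathbb{R}^N)}$ by the first bullet. Combining this with the identity just proved gives
$$
\left\langle I'_\infty(\widetilde w),\frac{\varphi}{[\varphi]_{D^{s,p}(\mathbb{R}^N)}}\right\rangle=\left\langle I'_\infty(w),\frac{\varphi_{z,\lambda}}{[\varphi_{z,\lambda}]_{D^{s,p}(\mathbb{R}^N)}}\right\rangle\qquad\text{for every }\varphi\in D^{s,p}_0(\Omega),
$$
and taking the supremum of the absolute value over $\varphi\in D^{s,p}_0(\Omega)$, equivalently over $\varphi_{z,\lambda}\in D^{s,p}_0(\Omega_{z,\lambda})$, yields the claim. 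The only real care needed anywhere is the \emph{exponent bookkeeping} — which relies throughout on the exact value $p^*_s=Np/(N-sp)$ — together with the observation that the rescaling is a genuine bijection between the two Dirichlet spaces, so that the two suprema are indeed taken over corresponding sets; there is no analytic obstacle to overcome.
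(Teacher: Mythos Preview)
Your proof is correct and follows exactly the approach indicated in the paper, which simply states that the result follows from a direct computation via change of variables and leaves the verification to the reader. You have carried out precisely that verification, tracking the homogeneity exponents carefully and noting the bijection $\varphi\mapsto\varphi_{z,\lambda}$ between $D^{s,p}_0(\Omega)$ and $D^{s,p}_0(\Omega_{z,\lambda})$ for the supremum identity.
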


\noindent
Next, we transform a Palais-Smale sequence for $I_\infty$ into a new one via rescaling and localization.

\begin{lemma}[Scalings, case I]
\label{proof5}
Let $\{z_n\}_{n\in\mathbb{N}}\subset \Omega$ and $\{\lambda_n\}_{n\in\mathbb{N}}\subset \mathbb{R}_+$ be such that
\[
\lim_{n\to\infty} z_n=z_0\qquad \mbox{ and }\qquad \lim_{n\to\infty} \lambda_n=0.
\]
Assume that $\{u_n\}_{n\in\mathbb{N}}\subset D^{s,p}_0(\Omega)$ is a Palais-Smale sequence for $I_\infty$ at level $c$
and that the rescaled sequence 
\[
v_{n}(x):=\lambda_n^{\frac{N-s\,p}{p}}u_n(\lambda_n\, x+z_n)\in D^{s,p}_0(\Omega_n),\qquad\mbox{ where }\ \Omega_n:=\frac{\Omega-z_n}{\lambda_n},
\]
is such that
\[
v_n\rightharpoonup v \quad\text{in $D^{s,p}_0(\mathbb{R}^N)$},\qquad
v_n\to v \quad \hbox{a.e.\ in $\mathbb{R}^N$}.
\]
If
\begin{equation}
\label{sbattila}
\lim_{n\to\infty}\frac{\varrho_n}{\lambda_n}=+\infty,\qquad \mbox{ where } \ \varrho_n:=\frac{1}{2}\,\mathrm{dist}(z_n,\partial \Omega),
\end{equation}
then $v$ is a critical point of $I_\infty$ on $D^{s,p}_0(\mathbb{R}^N)$, i.e. 
\[
\langle I_\infty'(v),\varphi\rangle=0,\qquad \mbox{ for every }\varphi \in D^{s,p}_0(\mathbb{R}^N).
\] 
Moreover, if $\zeta\in C^\infty_0(B_2(0))$ is a standard cut-off such that $\zeta\equiv 1$ on $B_{1}(0)$, the sequence
\[
w_n(z):=u_n(z)-\lambda_n^{\frac{s\,p-N}{p}}v\left(\frac{z-z_n}{\lambda_n}\right)\,\zeta\left(\frac{z-z_n}{\varrho_n}\right)\,\in D^{s,p}_0(\Omega),
\]
is a Palais-Smale sequence for $I_\infty$ at level $c-I_\infty(v)$ and such that
\begin{equation}
\label{ia} 
[u_n]^p_{D^{s,p}(\mathbb{R}^N)}-[w_n]^p_{D^{s,p}(\mathbb{R}^N)}=[v]^p_{D^{s,p}(\mathbb{R}^N)}+o_n(1).
\end{equation}
\end{lemma}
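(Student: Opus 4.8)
The plan is to transport the whole statement to the rescaled picture via the invariance of Lemma~\ref{proof7}, where it splits into a criticality assertion for the weak limit, a Br\'ezis--Lieb type decomposition, and a truncation estimate; this last step is the one I expect to be delicate. First I would fix notation. Put $R_n:=\varrho_n/\lambda_n$, so that $R_n\to+\infty$ by \eqref{sbattila}, set $\eta_n(x):=\zeta(\lambda_n x/\varrho_n)=\zeta(x/R_n)$, and let $T_n$ be the rescaling $(T_nf)(x):=\lambda_n^{(N-sp)/p}f(\lambda_n x+z_n)$. By Lemma~\ref{proof7}, $T_n$ is an isometry for $[\,\cdot\,]_{D^{s,p}(\mathbb R^N)}$ and for $\|\cdot\|_{L^{p^*_s}(\mathbb R^N)}$, it maps $D^{s,p}_0(\Omega)$ onto $D^{s,p}_0(\Omega_n)$, and it intertwines $I_\infty$, $I'_\infty$ and the dual norms over these two spaces. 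Hence $v_n=T_nu_n$ is a (bounded) Palais--Smale sequence for $I_\infty$ on $D^{s,p}_0(\Omega_n)$ at level $I_\infty(u_n)=c+o_n(1)$, and $T_nw_n=v_n-v\,\eta_n=:\widetilde w_n$, where $v\,\eta_n$ and $\widetilde w_n$ lie in $D^{s,p}_0(\Omega_n)$ for $n$ large since $\operatorname{supp}\eta_n\subset B_{2R_n}(0)$ while $B_{2\varrho_n}(z_n)\subset\Omega$. It therefore suffices to prove the following three facts and then pull them back through $T_n$: (i) $I'_\infty(v)=0$ in $D^{-s,p'}(\mathbb R^N)$; (ii) $\widetilde w_n$ is a Palais--Smale sequence for $I_\infty$ at level $c-I_\infty(v)$; (iii) $[v_n]^p_{D^{s,p}(\mathbb R^N)}-[\widetilde w_n]^p_{D^{s,p}(\mathbb R^N)}=[v]^p_{D^{s,p}(\mathbb R^N)}+o_n(1)$, the pullback of the last being exactly \eqref{ia}.

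For (i): given $\psi\in C^\infty_0(\mathbb R^N)$, the rescaled test function $T_n^{-1}\psi$ belongs to $D^{s,p}_0(\Omega)$ for $n$ large, because its support is contained in $z_n+\lambda_n\operatorname{supp}\psi\subset B_{2\varrho_n}(z_n)\subset\Omega$ (again by \eqref{sbattila}), and $[T_n^{-1}\psi]_{D^{s,p}(\mathbb R^N)}=[\psi]_{D^{s,p}(\mathbb R^N)}$. So Lemma~\ref{proof7} gives $\langle I'_\infty(v_n),\psi\rangle=\langle I'_\infty(u_n),T_n^{-1}\psi\rangle\to0$. Meanwhile, from $v_n\rightharpoonup v$ in $D^{s,p}_0(\mathbb R^N)$ and $v_n\to v$ a.e., the sequences $|x-y|^{-(N+sp)/p'}J_p(v_n(x)-v_n(y))$ and $J_{p^*_s}(v_n)$ are bounded in $L^{p'}(\mathbb R^{2N})$ and $L^{(p^*_s)'}(\mathbb R^N)$ and converge a.e., hence weakly, to the corresponding objects built from $v$; pairing against $\psi$ yields $\langle I'_\infty(v_n),\psi\rangle\to\langle I'_\infty(v),\psi\rangle$. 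Therefore $\langle I'_\infty(v),\psi\rangle=0$ on $C^\infty_0(\mathbb R^N)$, hence by density on all of $D^{s,p}_0(\mathbb R^N)$.

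The crux of (ii)--(iii) is the truncation estimate $r_n:=v\,\eta_n-v\to0$ in $D^{s,p}_0(\mathbb R^N)$ (the convergence $\|r_n\|_{L^{p^*_s}(\mathbb R^N)}\to0$ being immediate by dominated convergence). With $m_n:=1-\eta_n$ and the elementary inequality $|ac-bd|^p\le 2^{p-1}(|c|^p|a-b|^p+|b|^p|c-d|^p)$,
\[
[v\,m_n]^p_{D^{s,p}(\mathbb R^N)}\le 2^{p-1}\iint_{\{|x|>R_n\}}\frac{|v(x)-v(y)|^p}{|x-y|^{N+sp}}\,dx\,dy+2^{p-1}\int_{\mathbb R^N}|v(y)|^p\,G_n(y)\,dy,
\]
where $G_n(y):=\int_{\mathbb R^N}|\eta_n(x)-\eta_n(y)|^p|x-y|^{-N-sp}\,dx=R_n^{-sp}\,G_1(y/R_n)$, with $G_1$ bounded and $G_1(\upsilon)=O(|\upsilon|^{-N-sp})$ as $|\upsilon|\to\infty$; since $sp<N$, this gives $G_1\in L^{N/sp}(\mathbb R^N)$ and, by scaling, $\|G_n\|_{L^{N/sp}(\mathbb R^N)}=\|G_1\|_{L^{N/sp}(\mathbb R^N)}$. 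The first integral tends to $0$ by dominated convergence. For the second I would fix $\delta\in(0,\tfrac12)$ and split the $y$-integral: on $\{|y|\le\delta R_n\}$, use $G_n\le\|G_1\|_\infty R_n^{-sp}$ together with $\int_{B_{\delta R_n}}|v|^p\le\|v\|_{L^{p^*_s}(\mathbb R^N)}^p|B_{\delta R_n}|^{sp/N}$ to bound the contribution by $C\,\delta^{sp}\|v\|_{L^{p^*_s}(\mathbb R^N)}^p$; on $\{|y|>\delta R_n\}$, use H\"older with exponents $p^*_s/p$ and $N/sp$ to bound it by $\|v\|_{L^{p^*_s}(\{|y|>\delta R_n\})}^p\|G_1\|_{L^{N/sp}(\mathbb R^N)}$, which is $o_n(1)$ because $\delta R_n\to+\infty$. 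Letting $n\to\infty$ and then $\delta\to0$ gives $[r_n]_{D^{s,p}(\mathbb R^N)}\to0$. This is the step I expect to be the main obstacle: the term $\int|v|^pG_n$ is scale invariant, so no single H\"older bound makes it $o_n(1)$; one genuinely needs both the integrability $G_1\in L^{N/sp}$ (which is exactly where the hypothesis $sp<N$ enters) and the smallness of the $L^{p^*_s}$ tails of $v$, made quantitative by the split at radius $\delta R_n$.

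Finally, for (ii)--(iii) I would invoke Br\'ezis--Lieb. By Lemma~\ref{proof1} and Lemma~\ref{proof2} applied to $v_n\rightharpoonup v$ (with $v_n\to v$ a.e.), $I_\infty(v_n-v)=I_\infty(v_n)-I_\infty(v)+o_n(1)=c-I_\infty(v)+o_n(1)$; and pairing the splittings \ref{i2}--\ref{i3} against test functions of unit seminorm, using $I'_\infty(v)=0$ and $\|I'_\infty(v_n)\|_{D^{-s,p'}(\Omega_n)}\to0$, gives $\|I'_\infty(v_n-v)\|_{D^{-s,p'}(\Omega_n)}\to0$; so $v_n-v$ is a Palais--Smale sequence for $I_\infty$ on $D^{s,p}_0(\Omega_n)$ at level $c-I_\infty(v)$. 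Since $\widetilde w_n=(v_n-v)-r_n$ with $\{v_n-v\}$ bounded in $D^{s,p}_0(\mathbb R^N)$ and $r_n\to0$ in $D^{s,p}_0(\mathbb R^N)\cap L^{p^*_s}(\mathbb R^N)$, the continuity of $I_\infty$ and of $I'_\infty$ on bounded sets (cf.\ \eqref{lipschitz} for $I'_\infty$) carries the Palais--Smale property over to $\widetilde w_n$, which is (ii). Lastly $[\widetilde w_n]^p_{D^{s,p}(\mathbb R^N)}=[v_n-v]^p_{D^{s,p}(\mathbb R^N)}+o_n(1)$ by the reverse triangle inequality and the local Lipschitzness of $t\mapsto t^p$, while $[v_n]^p_{D^{s,p}(\mathbb R^N)}-[v_n-v]^p_{D^{s,p}(\mathbb R^N)}=[v]^p_{D^{s,p}(\mathbb R^N)}+o_n(1)$ by \ref{i1} of Lemma~\ref{proof2}, which is (iii). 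A recurring point of care throughout is that, the domains $\Omega_n$ varying with $n$, each dual-norm bound is needed only against $\varphi\in D^{s,p}_0(\Omega_n)$, which is precisely the form in which Lemma~\ref{proof7} delivers it.
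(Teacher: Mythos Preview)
Your proof is correct and follows the same overall architecture as the paper: criticality of $v$ via scaling invariance (Lemma~\ref{proof7}), Br\'ezis--Lieb splittings (Lemmas~\ref{proof1} and~\ref{proof2}), and a truncation estimate $v\eta_n\to v$ in $D^{s,p}_0(\mathbb R^N)$ to pass from $v_n-v$ to $\widetilde w_n$. The transfer of the Palais--Smale property from $v_n-v$ to $\widetilde w_n$ by uniform continuity of $I'_\infty$ on bounded sets is exactly the content of the paper's computation around~\eqref{spezzalo}, carried out there with the same appeal to~\eqref{lipschitz}.

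The one genuine difference is in how you obtain the truncation estimate. The paper does not prove it for an arbitrary $v\in D^{s,p}_0(\mathbb R^N)$: it first invokes Proposition~\ref{prop:integrabile} to gain the extra integrability $v\in L^q(\mathbb R^N)$ for some $q<p^*_s$ (this uses that $v$ is a critical point of $I_\infty$), and then applies the Truncation Lemma~\ref{lm:truncation}, whose proof needs precisely this $L^q$ hypothesis to kill the scale-invariant term $\mu_n^{sp}\int_{B_{1/\mu_n}}|v|^p$. Your $\delta$-splitting argument---bounding the contribution on $\{|y|\le\delta R_n\}$ by $C\delta^{sp}$ via $\|G_1\|_\infty$ and H\"older, and on $\{|y|>\delta R_n\}$ by the $L^{p^*_s}$-tail of $v$ via $G_1\in L^{N/sp}$---dispenses with that extra integrability entirely and works for any $v\in D^{s,p}_0(\mathbb R^N)$. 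This is a modest but real simplification: it makes the lemma independent of the regularity theory in Appendix~B and in fact strengthens Lemma~\ref{lm:truncation} by removing its $L^q$ hypothesis. The price is only that you must check $G_1\in L^{N/sp}(\mathbb R^N)$, which, as you note, follows from boundedness and the decay $G_1(\upsilon)=O(|\upsilon|^{-N-sp})$; the integrability condition at infinity is automatic and does not actually require $sp<N$ (that hypothesis is only needed so that $N/(sp)$ and $p^*_s/p$ are a H\"older pair).
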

\begin{proof}
Let us assume \eqref{sbattila}, under this assumption the sets $\Omega_n$ converges to $\mathbb{R}^N$. Thus, for every $\varphi\in C^\infty_0(\mathbb{R}^N)$ with compact support, we can assume that $\Omega_n$ contain the support of $\varphi$ for $n$ sufficiently large. 
From Lemma \ref{proof7} and the hypothesis on $\{u_n\}_{n\in\mathbb{N}}$, it readily follows 
\[
0=\lim_{n\to\infty}\left\langle I'_\infty(u_n),\lambda^{\frac{s\,p-N}{p}}_n\,\varphi\left(\frac{\cdot-z_n}{\lambda_n}\right)\right\rangle=\lim_{n\to\infty}\langle I'_\infty(v_n),\varphi\rangle=\langle I'_\infty(v),\varphi\rangle.
\]
By arbitrariness of $\varphi\in C^\infty_0(\mathbb{R}^N)$, we get the desired conclusion. Before going on, we observe that since $v$ is a critical point of $I_\infty$, from Lemma \ref{prop:integrabile} we get
\begin{equation}
\label{beccalossi}
v\in L^q(\mathbb{R}^N),\qquad \mbox{ for every } \frac{p^*_s}{p'}<q\le p^*_s.
\end{equation}
\vskip.2cm\noindent
For the second part of the statement, we first observe that $w_n\in D^{s,p}_0(\Omega)$ thanks to Lemma \ref{Bddlemma}.  Thanks to \eqref{beccalossi} we can apply Lemma \ref{lm:truncation}: by using this and \ref{i1} of Lemma \ref{proof2}, we have
\begin{equation}
\label{spezzamento}
\begin{split}
[v_n]^p_{D^{s,p}(\mathbb{R}^N)}&-[v_n-v\,\zeta(\lambda_n/\varrho_n\,\cdot)]^p_{D^{s,p}(\mathbb{R}^N)}\\
&=[v_n]^p_{D^{s,p}(\mathbb{R}^N)}-[v_n-v]^p_{D^{s,p}(\mathbb{R}^N)}+o_n(1)=[v]^p_{D^{s,p}(\mathbb{R}^N)}+o_n(1),
\end{split}
\end{equation}
thanks to the fact that $\lambda_n/\varrho_n$ converges to $0$, by assumption. 
From the scaling properties of Lemma \ref{proof7}, this yields
\[
[u_n]^p_{D^{s,p}(\mathbb{R}^N)}-[w_n]^p_{D^{s,p}(\mathbb{R}^N)}=[v]^p_{D^{s,p}(\mathbb{R}^N)}+o_n(1),\qquad\text{as $n\to\infty$,}
\]
which proves \eqref{ia}. Similarly to \eqref{spezzamento}, we also have
\begin{equation}
\label{spezzamento2}
\|v_n\|^{p^*_s}_{L^{p^*_s}(\mathbb{R}^N)}-\|v_n-v\,\zeta(\lambda_n/\varrho_n\,\cdot)\|^{p^*_s}_{L^{p^*_s}(\mathbb{R}^N)}=\|v\|^{p^*_s}_{L^{p^*_s}(\mathbb{R}^N)}+o_n(1)
\end{equation}
By scaling, \eqref{spezzamento} and \eqref{spezzamento2} we get
\begin{align*}
I_\infty(w_n)&=\frac{1}{p}\,[v_n-v\,\zeta(\lambda_n/\varrho_n\,\cdot)]^p_{D^{s,p}(\mathbb{R}^N)}-\frac{\mu}{p^*_s}\,\int_{\mathbb{R}^N} |v_n-v\,\zeta(\lambda_n/\varrho_n\,\cdot)|^{p^*_s}\,dx\\
&=\frac{1}{p}\,[v_n]_{D^{s,p}(\mathbb{R}^N)}-\frac{1}{p}\,[v]^p_{D^{s,p}(\mathbb{R}^N)}-\frac{\mu}{p^*_s}\,\int_{\mathbb{R}^N} |v_n|^{p^*_s}\,dx-\frac{\mu}{p^*_s}\,\int_{\mathbb{R}^N} |v|^{p^*_s}\,dx+o_n(1)\\
&=I_{\infty}(v_n)-I_\infty(v)+o_n(1)\\
&=I_\infty(u_n)-I_\infty(v)+o_n(1)\\
&=c-I_\infty(v)+o_n(1).
\end{align*}
It is only left to show that $\{w_n\}_{n\in\mathbb{N}}$ is a Palais-Smale sequence. For any $\varphi\in D^{s,p}_0(\Omega)$ with $[\varphi]_{D^{s,p}(\mathbb{R}^N)}=1$, we set
\[
\varphi_n(x)=\lambda_n^{\frac{N-s\,p}{p}}\,\varphi(\lambda_n\, x+z_n)\in D^{s,p}_0(\Omega_n).
\]
Clearly we still have $[\varphi_n]_{D^{s,p}(\mathbb{R}^N)}=1$.
We first observe that
\begin{equation}
\label{spezzalo}
\langle I'_\infty(v_n-v\,\zeta(\lambda_n/\varrho_n\,\cdot)),\varphi_n\rangle=\langle I_\infty'(v_n-v), \varphi_n\rangle+o_n(1),
\end{equation}
where $o_n(1)$ is independent of $\varphi$. Indeed, by using the compact notations
\[
Z_n(x,y)=\Big(v_n(x)-v(x)\,\zeta(\lambda_n/\varrho_n\,x)\Big)-\Big(v_n(y)-v(y)\,\zeta(\lambda_n/\varrho_n\,y)\Big),
\]
and
\[
V_n(x,y)=\Big(v_n(x)-v(x)\Big)-\Big(v_n(y)-v(y)\Big),
\]
we have
\[
\begin{split}
\Big|\langle I'_\infty(v_n-v\,\zeta(\lambda_n/\varrho_n\,\cdot))&-I_\infty'(v_n-v),\varphi_n\rangle\Big|\\
&\le \left|\int_{\mathbb{R}^{2N}} \frac{\Big(J_p(Z_n(x,y))-J_p(V_n(x,y))\Big)\,\Big(\varphi_n(x)-\varphi_n(y)\Big)}{|x-y|^{N+s\,p}}\,dx\,dy\right|\\
&+\mu\,\left|\int_{\mathbb{R}^N} \Big(J_{p^*_s}(v_n-v\,\zeta(\lambda_n/\varrho_n\cdot))-J_{p^*_s}(v_n-v)\Big)\,\varphi\,dx\right|.
\end{split}
\]
We focus on the nonlocal term, the other being easier. By H\"older inequality this is estimated by
\[
\left(\int_{\mathbb{R}^{2N}}\frac{|J_p(Z_n(x,y))-J_p(V_n(x,y))|^{p'}}{|x-y|^{N+s\,p}}\,dx\,dy\right)^\frac{1}{p'}. 
\]
Let us suppose for simplicity that\footnote{For $1<p\le 2$ the proof is even simpler, it is still sufficient to use \eqref{lipschitz}.} $p> 2$. Then we use \eqref{lipschitz} and H\"older inequality with exponents 
\[
\frac{p}{p'}\qquad \mbox{ and } \qquad \frac{p}{p-p'},
\]
so to get
\[
\begin{split}
\int_{\mathbb{R}^{2N}}&\frac{|J_p(Z_n(x,y))-J_p(V_n(x,y))|^{p'}}{|x-y|^{N+s\,p}}\,dx\,dy\\
&\le C_p\, \int_{\mathbb{R}^{2N}}\left(|Z_n(x,y)|+|V_n(x,y)|\right)^{p'\,(p-2)}\,\frac{|Z_n(x,y)-V_n(x,y)|^{p'}}{|x-y|^{N+s\,p}}\,dx\,dy\\
&\le C_p\, \left(\int_{\mathbb{R}^{2N}}\frac{\left(|Z_n(x,y)|+|V_n(x,y)|\right)^{p}}{|x-y|^{N+s\,p}}\,dx\,dy\right)^\frac{p-p'}{p}\,\left(\int_{\mathbb{R}^{2N}}\frac{|Z_n(x,y)-V_n(x,y)|^{p}}{|x-y|^{N+s\,p}}\,dx\,dy\right)^\frac{p'}{p}.
\end{split}
\]
By recalling the definitions of $Z_n$ and $V_n$, we get that the first term is uniformly bounded, while the second one coincides with
\[
[v\,\zeta(\lambda_n/\varrho_n\cdot)-v]^{p'}_{D^{s,p}(\mathbb{R}^N)},
\]
which converges to $0$ thanks to Lemma \ref{lm:truncation}. This proves \eqref{spezzalo} and by using it in conjunction with Lemma \ref{proof2}, we get
\begin{align*}
\langle I_\infty'(w_n), \varphi\rangle&=\langle I'_\infty(v_n-v\,\zeta(\lambda_n/\varrho_n\,\cdot)),\varphi_n\rangle\\
&=\langle I_\infty'(v_n-v), \varphi_n\rangle+o_n(1)\\
&=\langle I_\infty'(v_n),\varphi_n\rangle-\langle I_\infty'(v), \varphi_n\rangle+o_n(1),\\
&=\langle I_\infty'(u_n),\varphi\rangle-\langle I_\infty'(v), \varphi_n\rangle+o_n(1),
\end{align*}
where $o_n(1)$ is independent of $\varphi$. 
We now use that $\{u_n\}_{n\in\mathbb{N}}$ is a Palais-Smale sequence 
and that $\langle I_\infty'(v), \varphi_n\rangle=0$ by the first part of the proof. 
This allows us to conclude. 
\end{proof}

\begin{lemma}[Scalings, case II]
\label{proof6}
Under the assumptions of Lemma \ref{proof5}, if 
\begin{equation}
\label{risbattila}
\liminf_{n\to\infty}\frac{1}{\lambda_n}\,\mathrm{dist}(z_n,\partial \Omega)<\infty.
\end{equation}
then $z_0\in\partial\Omega$, $v\in D^{s,p}_0(\mathcal{H})$ and $v$ is a critical point of $I_\infty$ on $D^{s,p}_0(\mathcal{H})$, i.e.
\[
\langle I_\infty'(v),\varphi\rangle=0,\qquad \mbox{ for every } \varphi\in D^{s,p}_0(\mathcal{H}),
\] 
where $\mathcal{H}$ is a half-space.
\end{lemma}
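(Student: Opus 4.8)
The plan is to exploit the dichotomy complementary to Lemma \ref{proof5}: now the rescaled domains $\Omega_n = (\Omega - z_n)/\lambda_n$ do \emph{not} exhaust $\mathbb{R}^N$, because the (rescaled) distance from the origin to $\partial\Omega_n$, namely $\mathrm{dist}(z_n,\partial\Omega)/\lambda_n$, stays bounded along a subsequence. First I would pass to that subsequence and set $d_n := \mathrm{dist}(z_n,\partial\Omega)/\lambda_n$, so $d_n \to d \in [0,\infty)$. Since $\lambda_n \to 0$ while $\mathrm{dist}(z_n,\partial\Omega) = \lambda_n d_n \to 0$, and $z_n \to z_0$, we immediately get $\mathrm{dist}(z_0,\partial\Omega)=0$, i.e. $z_0 \in \partial\Omega$. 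This is the easy part.

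The heart of the matter is identifying the limiting domain as a half-space and showing $v \in D^{s,p}_0(\mathcal{H})$. Because $\partial\Omega$ is smooth, near $z_0$ the boundary is a $C^1$ graph; after translating by $z_n$, dividing by $\lambda_n$, and (if one wishes) rotating to align the inner normal at the nearest boundary point $p_n\in\partial\Omega$ to $z_n$ with a fixed axis, the blown-up sets $\Omega_n$ converge, locally in the Hausdorff / Kuratowski sense, to a half-space $\mathcal{H} = \{x : x\cdot e > -d\}$ (or, if $d=0$, a half-space through the origin). The point is that curvature terms of $\partial\Omega$ scale like $\lambda_n \to 0$ and disappear in the blow-up, leaving an affine half-space. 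I would make this precise by the standard flattening argument: fix $R>0$; for $n$ large, $\Omega_n \cap B_R$ agrees with $\mathcal{H}\cap B_R$ up to a set of measure $o_n(1)$, and the indicator functions $\mathbf{1}_{\Omega_n} \to \mathbf{1}_{\mathcal{H}}$ in $L^1_{\mathrm{loc}}$. Since each $v_n$ vanishes a.e. on $\mathbb{R}^N\setminus\Omega_n$ and $v_n \to v$ a.e., it follows that $v = 0$ a.e. on $\mathbb{R}^N\setminus\mathcal{H}$; combined with $v \in D^{s,p}_0(\mathbb{R}^N)$ (weak limit of the bounded sequence $\{v_n\}$), this gives $v \in D^{s,p}_0(\mathcal{H})$.

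Finally, to see that $v$ is a critical point of $I_\infty$ on $D^{s,p}_0(\mathcal{H})$, I would argue exactly as in the first part of Lemma \ref{proof5}, but testing only against $\varphi \in C^\infty_0(\mathcal{H})$. For such $\varphi$ the (closed) support is a compact subset of the open half-space $\mathcal{H}$, hence by the Hausdorff convergence $\Omega_n \supset \mathrm{supp}\,\varphi$ for all $n$ large, so $\lambda_n^{(sp-N)/p}\varphi((\cdot - z_n)/\lambda_n)$ is an admissible test function for $I_\infty$ on $D^{s,p}_0(\Omega)$ eventually. Then the scaling identity of Lemma \ref{proof7}\ref{a2} gives
\[
\Big\langle I'_\infty(u_n),\, \lambda_n^{\frac{sp-N}{p}}\varphi\big((\cdot - z_n)/\lambda_n\big)\Big\rangle = \langle I'_\infty(v_n),\varphi\rangle,
\]
and the left-hand side tends to $0$ because $\{u_n\}$ is Palais-Smale for $I_\infty$ and the test functions have uniformly bounded seminorm. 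On the right-hand side, weak convergence $v_n \rightharpoonup v$ together with the a.e. convergence lets us pass to the limit in the nonlocal term (weak convergence of $J_p$ of the difference quotients, via Lemma \ref{proof2}\ref{i3} applied to the $v_n$) and in the $L^{p^*_s}$ term, so $\langle I'_\infty(v_n),\varphi\rangle \to \langle I'_\infty(v),\varphi\rangle$. Hence $\langle I'_\infty(v),\varphi\rangle = 0$ for all $\varphi \in C^\infty_0(\mathcal{H})$, and by density this holds for all $\varphi \in D^{s,p}_0(\mathcal{H})$. The main obstacle is the geometric blow-up step — making rigorous that a smooth bounded domain, rescaled at a boundary point with $\lambda_n \to 0$, converges to a half-space in a sense strong enough to both force $v$ to vanish outside $\mathcal{H}$ and to accommodate arbitrary compactly supported test functions in $\mathcal{H}$; everything else is a routine repetition of the arguments already used in Lemma \ref{proof5}.
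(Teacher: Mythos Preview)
Your proposal is correct and takes essentially the same approach as the paper. The paper's own proof is extremely terse --- it simply says the argument is the same as the first part of Lemma~\ref{proof5}, with the single additional observation that under \eqref{risbattila} the rescaled domains $\Omega_n$ converge to a half-space $\mathcal{H}$ rather than to $\mathbb{R}^N$ --- whereas you have filled in the details (why $z_0\in\partial\Omega$, the flattening/blow-up of $\partial\Omega$, why $v$ vanishes a.e.\ outside $\mathcal{H}$, and the support inclusion $\mathrm{supp}\,\varphi\subset\Omega_n$ for large $n$) that the paper leaves implicit.
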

\begin{proof}
Under the assumption \eqref{risbattila}, the proof is the same as in the first part of Lemma \ref{proof5}, we only have to observe that in this case the sets $\Omega_n$ converge to a half-space $\mathcal{H}$.
\end{proof}

\noindent
Next we prove that nonsingular scalings of weakly vanishing sequences are weakly vanishing. 
\begin{lemma}
\label{weak-c}
Assume that $u_n\rightharpoonup 0$ in $D^{s,p}_0(\mathbb{R}^N)$, $\lambda_n\to \lambda_0>0$,
$\{z_n\}_{n\in\mathbb{N}}\subset\mathbb{R}^N$ such that $z_n\to z_0$. We set
\[
v_n(x):=\lambda_n^{\frac{N-sp}{p}}u_n(\lambda_nx+z_n).
\]
Then $v_n\rightharpoonup 0$ in $D^{s,p}_0(\mathbb{R}^N)$.
\end{lemma}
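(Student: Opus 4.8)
The plan is to show that $\{v_n\}_{n\in\mathbb{N}}$ is bounded in $D^{s,p}_0(\mathbb{R}^N)$, to pass to the stronger $L^p_{\mathrm{loc}}$ convergence — which is insensitive to nonsingular rescalings — and then to identify every weak limit as $0$; since the limit will not depend on the chosen subsequence, the whole sequence converges weakly to $0$.

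First I would invoke the scaling invariance of Lemma~\ref{proof7}, which gives $[v_n]_{D^{s,p}(\mathbb{R}^N)}=[u_n]_{D^{s,p}(\mathbb{R}^N)}$ for every $n$; hence $\{v_n\}_{n\in\mathbb{N}}$ is bounded in the reflexive space $D^{s,p}_0(\mathbb{R}^N)$, and it suffices to prove that every subsequence of $\{v_n\}_{n\in\mathbb{N}}$ admits a further subsequence converging weakly to $0$.

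Next I would record that $u_n\to 0$ in $L^p_{\mathrm{loc}}(\mathbb{R}^N)$. For fixed $R>0$, Sobolev and H\"older inequalities on the bounded set $B_R(0)$ yield $\|u_n\|_{L^p(B_R(0))}\le C(R)\,[u_n]_{D^{s,p}(\mathbb{R}^N)}$, while $[u_n]_{D^{s,p}(B_R(0))}\le [u_n]_{D^{s,p}(\mathbb{R}^N)}$; thus $\{u_n\}_{n\in\mathbb{N}}$ is bounded in $W^{s,p}(B_R(0))$, and the compact embedding $W^{s,p}(B_R(0))\hookrightarrow L^p(B_R(0))$ together with the uniqueness of the weak limit forces $u_n\to 0$ in $L^p(B_R(0))$. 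A change of variables then transfers this to $\{v_n\}_{n\in\mathbb{N}}$: since $\lambda_n\to\lambda_0>0$ and $z_n\to z_0$, the sets $\lambda_n\,B_R(0)+z_n$ are eventually contained in a fixed ball $B_{R'}(0)$, and
\[
\|v_n\|_{L^p(B_R(0))}^p=\lambda_n^{-sp}\int_{\lambda_n B_R(0)+z_n}|u_n(y)|^p\,dy\le \lambda_n^{-sp}\int_{B_{R'}(0)}|u_n|^p\,dy\longrightarrow 0,
\]
because $\lambda_n^{-sp}\to\lambda_0^{-sp}$ and $\int_{B_{R'}(0)}|u_n|^p\,dy\to 0$. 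Hence $v_n\to 0$ in $L^p_{\mathrm{loc}}(\mathbb{R}^N)$ as well.

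Finally, given any subsequence of $\{v_n\}_{n\in\mathbb{N}}$, reflexivity provides a further subsequence converging weakly in $D^{s,p}_0(\mathbb{R}^N)$ to some $v$; applying the previous step to this subsequence, it also converges to $v$ in $L^p_{\mathrm{loc}}(\mathbb{R}^N)$, and we have just seen that the limit there is $0$, so $v=0$. This proves $v_n\rightharpoonup 0$ in $D^{s,p}_0(\mathbb{R}^N)$. The only slightly delicate point is that $D^{s,p}_0(\mathbb{R}^N)$ is not a space of functions on a bounded domain, so no global compact embedding is available and the argument must be localized; the hypothesis $\lambda_0>0$ is precisely what keeps the change of variables non-degenerate — for $\lambda_0=0$ the statement fails, owing to concentration.
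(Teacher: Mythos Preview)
Your proof is correct, but it takes a genuinely different route from the paper's. The paper argues directly at the dual level: given any $F\in D^{-s,p'}(\mathbb{R}^N)$, it represents $F$ via a function $\varphi\in L^{p'}(\mathbb{R}^{2N})$ through the isometric embedding $u\mapsto (u(x)-u(y))/|x-y|^{(N+sp)/p}$ of $D^{s,p}_0(\mathbb{R}^N)$ into $L^p(\mathbb{R}^{2N})$, performs the change of variables inside $\langle F,v_n\rangle$, and then splits the resulting pairing into a term that vanishes by the weak convergence of $u_n$ and a remainder controlled by $\|\Psi_n-\Psi\|_{L^{p'}(\mathbb{R}^{2N})}\to 0$, where $\Psi_n,\Psi$ are the rescaled kernels. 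No compactness is used, only continuity of translations/dilations in $L^{p'}$.

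Your argument instead trades the dual representation for compactness: you localize, use the compact embedding $W^{s,p}(B_R)\hookrightarrow L^p(B_R)$ to upgrade weak to strong local convergence for $u_n$, transfer this to $v_n$ by the nondegenerate change of variables, and then run a subsequence argument in the reflexive space $D^{s,p}_0(\mathbb{R}^N)$. This is a perfectly sound alternative and arguably more elementary, since it avoids invoking the representation of functionals on $D^{s,p}_0$; the price is the detour through Rellich--Kondrachov and the ``every subsequence has a further subsequence'' device. One minor comment: in your final paragraph the phrase ``it also converges to $v$ in $L^p_{\mathrm{loc}}$'' is slightly roundabout---you have already shown $v_n\to 0$ in $L^p_{\mathrm{loc}}$ for the \emph{full} sequence, so you can simply test the weak limit $v$ against $\phi\in C^\infty_0$ and conclude $v=0$ immediately, without re-invoking the compact embedding for the subsequence.
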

\begin{proof}
Take any continuous functional $F\in D^{-s,p'}(\mathbb{R}^N)$. Then, there exists a function $\varphi\in L^{p'}(\mathbb{R}^{2N})$ with
$$
\langle F,u\rangle= \int_{\mathbb{R}^{2N}}\frac{\varphi(x,y)\,(u(x)-u(y))}{|x-y|^{\frac{N+s\,p}{p}}}\,dx\,dy,\qquad \text{for all }u\in D^{s,p}_0(\mathbb{R}^N).
$$
We have, by a change of variables,
\begin{align*}
 \langle F,v_n\rangle &=\lambda_n^{\frac{N-s\,p}{p}}\int_{\mathbb{R}^{2N}}\frac{\varphi(x,y)(u_n(\lambda_n\,x+z_n)-u_n(\lambda_n\,y+z_n))}
{|x-y|^\frac{N+s\,p}{p}}\,dx\,dy\\
&=\lambda_n^{-\frac{2N}{p'}}\int_{\mathbb{R}^{2N}}\frac{\varphi\left(\dfrac{x-z_n}{\lambda_n},\dfrac{y-z_n}{\lambda_n}\right)(u_n(x)-u_n(y))}{ |x-y|^{\frac{N+sp}{p}}}\,dx\,dy :=\omega_n.
\end{align*}
On the other hand, introducing the functions of $\Psi_n,\Psi\in L^{p'}(\mathbb{R}^{2N})$ by setting
$$
\Psi_n(x,y):=\varphi\left(\frac{x-z_n}{\lambda_n},\frac{y-z_n}{\lambda_n}\right),\qquad
\Psi(x,y):=\varphi\left(\frac{x-z_0}{\lambda_0},\frac{y-z_0}{\lambda_0}\right),
$$
we have
\begin{align*}
\omega_n&=\lambda_n^{-\frac{2N}{p'}}\int_{\mathbb{R}^{2N}}\frac{\Psi(x,y)(u_n(x)-u_n(y))}{|x-y|^\frac{N+s\,p}{p}}\,dxdy   \\
&+\lambda_n^{-\frac{2N}{p'}}\int_{\mathbb{R}^{2N}}\frac{(\Psi_n(x,y)-\Psi(x,y))(u_n(x)-u_n(y))}{|x-y|^\frac{N+s\,p}{p}}\,dxdy   \\
& =\lambda_n^{-\frac{2N}{p'}}\int_{\mathbb{R}^{2N}}\frac{(\Psi_n(x,y)-\Psi(x,y))(u_n(x)-u_n(y))}{|x-y|^\frac{N+s\,p}{p}}\,dxdy+o_n(1),
\end{align*}
in view of $u_n\rightharpoonup 0$ in $D^{s,p}_0(\mathbb{R}^N)$ and $\Psi\in L^{p'}(\mathbb{R}^{2N})$. Then $\omega_n=o_n(1)$ follows by 
$$
\sup_{n\in\mathbb{N}}\left\|\frac{u_n(x)-u_n(y)}{|x-y|^{\frac{N+sp}{p}}}\right\|_{L^{p}(\mathbb{R}^{2N})}<\infty,
$$
and $\Psi_n\to\Psi$ strongly in $L^{p'}(\mathbb{R}^{2N})$ as $n\to\infty$, since
$\lambda_n\to \lambda_0>0$ and $z_n\to z_0$.
\end{proof}

\subsection{Estimates for solutions}

\noindent
Next we prove a Caccioppoli inequality, which will turn out to be the main technical
tool in order to handle {\bf Step 3} in the proof of Theorem~\ref{Theorem3}.

\begin{proposition}[Caccioppoli inequality]
\label{prop:caccioloc}
Let $F\in D^{-s,p'}(\Omega)$ and let $u\in D^{s,p}_0(\Omega)$ with
\begin{equation*}
\int_{\mathbb{R}^{2\,N}}\frac{J_p(u(x)-u(y))\,(\varphi(x)-\varphi(y))}{|x-y|^{N+s\,p}}\,dx\,dy
=\langle F,\varphi\rangle,\quad \text{for any $\varphi\in D^{s,p}_0(\Omega)$.}
\end{equation*}
Then for every open set $\Omega'$ such that $\Omega'\cap \Omega\not=\emptyset$ and every positive $\psi\in C^\infty_0(\Omega')$ we have
\begin{equation*}
\begin{split}
\int_{\Omega'\times\Omega'} &\frac{\big|u(x)\,\psi(x)-u(y)\,\psi(y)\big|^{p}\,}{|x-y|^{N+s\,p}}\, dx\, dy\\
&\le \mathcal{C}\,\int_{\Omega'\times\Omega'} \frac{|\psi(x)-\psi(y)|^p}{|x-y|^{N+s\,p}}\,\Big(|u(x)|^p+|u(y)|^p\Big)\, dx\,dy\\
&+\mathcal{C}\,\left(\sup_{y\in \mathrm{spt}(\psi)} \int_{\mathbb{R}^N\setminus \Omega'} \frac{|u(x)|^{p-1}}{|x-y|^{N+s\,p}}\,dx\right)\, \int_{\Omega'} |u|\, \psi^p\, dx+\mathcal{C}\,\Big|\langle F,u\,\psi^p\rangle\Big|,
\end{split}
\end{equation*}
for some constant $\mathcal{C}>0$ depending on $p$ only.
\end{proposition}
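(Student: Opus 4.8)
The plan is to test the equation with the compactly supported function $\varphi=u\,\psi^p$ (which lies in $D^{s,p}_0(\Omega)$ since $\psi\in C^\infty_0(\Omega')$ and $u\in D^{s,p}_0(\Omega)$), and then to estimate the resulting nonlocal Dirichlet form from below by the localized seminorm of $u\,\psi$ on $\Omega'\times\Omega'$, up to the two error terms on the right-hand side. The point of departure is the algebraic inequality, valid for all $a,b\in\mathbb{R}$ and $\alpha,\beta\ge 0$,
\[
J_p(a-b)\,\big(a\,\alpha^p-b\,\beta^p\big)\ \ge\ c_p\,\big|a\,\alpha-b\,\beta\big|^p-C_p\,|\alpha-\beta|^p\,\big(|a|^p+|b|^p\big),
\]
with $c_p,C_p>0$ depending only on $p$; this is the standard pointwise estimate behind Caccioppoli-type bounds for the fractional $p$-Laplacian (it follows from convexity/Young's inequality, treating the cases $1<p\le 2$ and $p>2$ separately via \eqref{lipschitz}). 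Applying this with $a=u(x)$, $b=u(y)$, $\alpha=\psi(x)$, $\beta=\psi(y)$ and dividing by $|x-y|^{N+sp}$, integration over $\Omega'\times\Omega'$ gives
\[
c_p\int_{\Omega'\times\Omega'}\frac{|u(x)\psi(x)-u(y)\psi(y)|^p}{|x-y|^{N+sp}}\,dx\,dy
\ \le\ \int_{\Omega'\times\Omega'}\frac{J_p(u(x)-u(y))\,(u(x)\psi(x)^p-u(y)\psi(y)^p)}{|x-y|^{N+sp}}\,dx\,dy+C_p\,\mathcal{R}_1,
\]
where $\mathcal{R}_1$ is exactly the first term on the right-hand side of the claimed inequality.

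The next step is to relate the integral of $J_p(u(x)-u(y))\,(u(x)\psi(x)^p-u(y)\psi(y)^p)$ over $\Omega'\times\Omega'$ to the full pairing $\langle F,u\,\psi^p\rangle$, which by hypothesis equals the same integrand integrated over all of $\mathbb{R}^{2N}$. The difference between the two is the integral over $\mathbb{R}^{2N}\setminus(\Omega'\times\Omega')$; since $\psi$ is supported in $\Omega'$, on that region at least one of $\psi(x),\psi(y)$ vanishes, so the integrand reduces (by symmetry in $x\leftrightarrow y$) to a contribution of the form
\[
-2\int_{\Omega'}\int_{\mathbb{R}^N\setminus\Omega'}\frac{J_p(u(x)-u(y))\,u(x)\,\psi(x)^p}{|x-y|^{N+sp}}\,dy\,dx.
\]
Using $|J_p(u(x)-u(y))|\le (|u(x)|+|u(y)|)^{p-1}\le 2^{p-1}(|u(x)|^{p-1}+|u(y)|^{p-1})$ and noting that for $x\in\mathrm{spt}(\psi)\subset\Omega'$ and $y\in\mathbb{R}^N\setminus\Omega'$ one has $|x-y|$ bounded below, the $|u(y)|^{p-1}$ piece is absorbed into the $\mathcal{R}_1$-type term (bounding $|u(y)|^{p-1}|u(x)|\le |u(x)|^p+|u(y)|^p$ after a further splitting, or more simply it is controlled by $[u]^p_{D^{s,p}}$ localized), while the $|u(x)|^{p-1}$ piece, after pulling the $y$-integral out as a supremum, produces precisely
\[
\Big(\sup_{y\in\mathrm{spt}(\psi)}\int_{\mathbb{R}^N\setminus\Omega'}\frac{|u(x)|^{p-1}}{|x-y|^{N+sp}}\,dx\Big)\int_{\Omega'}|u|\,\psi^p\,dx,
\]
the second error term. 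Collecting everything and renaming constants gives the stated inequality with $\mathcal{C}=\mathcal{C}(p)$.

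The main obstacle is the pointwise algebraic inequality above and the careful bookkeeping of the "tail" region $\mathbb{R}^{2N}\setminus(\Omega'\times\Omega')$: one must verify that every cross term there is either of the $|u(x)|^{p-1}$–times–$\psi(x)^p$ type (giving the supremum term) or is dominated by the already-present first term on the right, without generating uncontrolled contributions; the case distinction $1<p\le 2$ versus $p>2$ in \eqref{lipschitz} and the symmetrization in $x\leftrightarrow y$ are where care is needed, but no genuinely new idea beyond the standard fractional Caccioppoli scheme is required.
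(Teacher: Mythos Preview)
Your overall strategy---test with $\varphi=u\,\psi^p$, apply the pointwise algebraic inequality on $\Omega'\times\Omega'$, and treat the cross region $\Omega'\times(\mathbb{R}^N\setminus\Omega')$ separately---matches the paper's scheme. The local part and the algebraic inequality are handled correctly. The problem is in your estimate of the cross (tail) region.

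You bound the cross contribution by taking absolute values, $|J_p(u(x)-u(y))|\le 2^{p-1}\big(|u(x)|^{p-1}+|u(y)|^{p-1}\big)$, and then split. With your convention $x\in\Omega'$, $y\in\mathbb{R}^N\setminus\Omega'$, the piece containing $|u(y)|^{p-1}$ (the \emph{outside} variable) does yield the stated tail term; you have the variable labels switched in your write-up, but that is cosmetic. The genuine issue is the other piece, containing $|u(x)|^{p-1}$: it produces
\[
\int_{\mathrm{spt}(\psi)} |u(x)|^p\,\psi(x)^p\left(\int_{\mathbb{R}^N\setminus\Omega'}\frac{dy}{|x-y|^{N+sp}}\right)dx,
\]
and this is \emph{not} one of the three terms on the right-hand side. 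It cannot be absorbed into $\mathcal{R}_1$ (which lives on $\Omega'\times\Omega'$ and carries the factor $|\psi(x)-\psi(y)|^p$), nor into the tail term (take $u$ supported in $\Omega'$: the tail term vanishes while this piece does not), nor into $|\langle F,u\psi^p\rangle|$. Your suggested fix via Young's inequality $|u(y)|^{p-1}|u(x)|\le |u(x)|^p+|u(y)|^p$ only makes things worse, since it adds a further integral of $|u(y)|^p$ over the exterior. Any bound on the inner integral depends on $\mathrm{dist}(\mathrm{spt}(\psi),\partial\Omega')$, so the resulting constant would not depend on $p$ alone as the proposition requires.

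The paper avoids this spurious term by using a \emph{one-sided} bound based on the monotonicity of $J_p$ rather than an absolute-value estimate: for $x\in\Omega'$ one has
\[
J_p\big(u(x)-u(y)\big)\,u(x)\ \ge\ J_p\big(-u(y)\big)\,u(x)\ \ge\ -|u(y)|^{p-1}\,|u(x)|,
\]
which immediately gives the lower bound on the cross integral in terms of the tail term alone, with no extra $|u(x)|^p$ contribution. Replacing your absolute-value splitting by this monotonicity argument closes the gap.
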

\begin{proof}
The proof is the same as that of Caccioppoli inequality \cite[Proposition 3.5]{BP}. The only differences are that here $F$ is not necessarily (represented by) a function and that the test function $\psi$ can cross the boundary $\partial\Omega$.
We insert the test function\footnote{Observe that this is a legitimate test function, since $\psi^p\,u\in D^{s,p}_0(\mathbb{R}^N)$ by Lemma \ref{Bddlemma} and $\psi^p\,u \equiv 0$ outside $\Omega$.} 
$\varphi=\psi^p\,u$, where $\psi\in C^\infty_0(\Omega)$ is as in the statement. Then we get
\begin{equation}
\label{subeigeneq20}
\begin{split}
\int_{\mathbb{R}^{2N}} &\frac{J_p(u(x)-u(y))}{|x-y|^{N+s\,p}}\, (u(x)\,\psi(x)^p-u(y)\,\psi(y)^p)\, dx\, dy=\langle F,u\,\psi^p\rangle.
\end{split}
\end{equation}
We now split the double integral in three parts:
\[
\mathcal{I}_1=\int_{\Omega'\times\Omega'} \frac{J_p(u(x)-u(y))}{|x-y|^{N+s\,p}}\, (u(x)\,\psi(x)^p-u(y)\,\psi(y)^p)\, dx\, dy,
\]
\[
\mathcal{I}_2=\int_{\Omega'\times(\mathbb{R}^N\setminus\Omega')} \frac{J_p(u(x)-u(y))}{|x-y|^{N+s\,p}}\,u(x)\,\psi(x)^p\, dx\, dy,
\]
and
\[
\mathcal{I}_3=-\int_{\Omega'\times(\mathbb{R}^N\setminus\Omega')} \frac{J_p(u(x)-u(y))}{|x-y|^{N+s\,p}}\, u(y)\,\psi(y)^p\, dx\, dy
\]
The first integral $\mathcal{I}_1$ can be estimated exactly as in \cite[Proposition 3.5]{BP}, with the choices
\[
v=u,\qquad g(t)=t=G(t),
\]
there. This gives
\begin{equation}
\label{I1}
\begin{split}
c\,\int_{\Omega'\times\Omega'} &\frac{\Big|u(x)\,\psi(x)-u(y)\,\psi(y)\Big|^{p}\,}{|x-y|^{N+s\,p}}\,dx\, dy\\
&\le \mathcal{I}_1+C\,\int_{\Omega'\times\Omega'} \frac{|\psi(x)-\psi(y)|^p}{|x-y|^{N+s\,p}}\,\Big(|u(x)|^p+|u(y)|^p\Big)\, dx\,dy.\\
\end{split}
\end{equation}
For the estimate of $\mathcal{I}_2$ we proceed similarly to \cite{BP}, by observing that the positivity assumption on $u$ can be dropped. Namely, we simply observe that by monotonicity of $\tau\mapsto J_p(\tau)$, for $x\in\Omega'$ we have
\[
J_p(u(x)-u(y))\ge J_p(-u(y)),\qquad \mbox{ if } u(x)\ge 0
\]
or
\[
J_p(u(x)-u(y))\le J_p(-u(y)),\qquad \mbox{ if } u(x)<0.
\]
Thus in both cases we get
\[
J_p(u(x)-u(y))\,u(x)\ge J_p(-u(y))\,u(x).
\]
Then we obtain
\begin{equation}
\label{I2}
\begin{split}
\mathcal{I}_2&\ge -\int_{\Omega'\times(\mathbb{R}^N\setminus\Omega')} \frac{|u(y)|^{p-2}\, u(y)}{|x-y|^{N+s\,p}}\, u(x)\,\psi(x)^p\, dx\, dy\\
&\ge-\left(\sup_{x\in\mathrm{spt}(\psi)} \int_{\mathbb{R}^N\setminus \Omega'} \frac{|u(y)|^{p-1}}{|x-y|^{N+s\,p}}\,dy\right)\, \int_{\Omega'} |u(x)|\, \psi(x)^p\, dx.
\end{split}
\end{equation}
The third integral can be estimated in a similar fashion. By inserting the above estimates in \eqref{subeigeneq20}, we get the conclusion.
\end{proof}
\noindent
Let us set
\[
\mathcal{S}_{p,s}:=\inf_{u\in D^{s,p}_0(\mathbb{R}^N)} \left\{ [u]_{D^{s,p}(\mathbb{R}^N)}^p\, :\, \|u\|_{L^{p^*_s}(\mathbb{R}^N)}=1\right\},
\]
which is nothing but the sharp constant in the Sobolev inequality for $D^{s,p}_0(\mathbb{R}^N)$, namely
\begin{equation}
\label{sobolevity}
\mathcal{S}_{p,s}\,\mathcal\|u\|_{L^{p^*_s}(\mathbb{R}^N)}^p\le [u]^p_{D^{s,p}(\mathbb{R}^N)},\qquad \text{for all $u\in D^{s,p}_0(\mathbb{R}^N)$}.
\end{equation}
It is useful to remark that if $u\in D^{s,p}_0(E)$ weakly solves
\begin{equation}
\label{2.10}
\begin{cases}
(- \Delta)_p^s\, u  = \mu\, |u|^{p_s^\ast - 2}u & \text{in }E  \\
u=0 & \text{in $\mathbb{R}^N\setminus E$},
\end{cases}
\end{equation}
in some open set $E\subset\mathbb{R}^N$ ($E=\mathbb{R}^N$ is allowed) and for some $\mu>0$,
then we get
\[
[u]_{D^{s,p}(\mathbb{R}^N)}^p=\mu\,\|u\|_{L^{p^*_s}(E)}^{p^*}.
\]
Combining this with \eqref{sobolevity} yields the following universal lower bounds for the norms 
of the nontrivial solutions of problem \eqref{2.10}, that is 
\begin{equation}
\label{snorme}
\|u\|_{L^{p^*_s}(E)}^{p^*}\ge \left(\frac{\mathcal{S}_{p,s}}{\mu}\right)^\frac{N}{s\,p}\qquad \mbox{ and }\qquad [u]^p_{D^{s,p}(\mathbb{R}^N)}\ge \mu\,\left(\frac{\mathcal{S}_{p,s}}{\mu}\right)^\frac{N}{s\,p}.
\end{equation}
This in turn entails the following universal estimate for the energy of solutions
\[
\frac{1}{p}\,[u]^p_{D^{s,p}(\mathbb{R}^N)}-\frac{\mu}{p^*_s}\,\int_{E} |u|^{p^*_s}\,dx\ge  \mu\,\frac{s}{N}\,\left(\frac{\mathcal{S}_{p,s}}{\mu}\right)^\frac{N}{s\,p}.
\]
\noindent
This lower bound can be improved, if we consider {\em sign-changing} solutions. This is the content of the next useul result.

\begin{lemma}[Energy doubling]
\label{signchanging}
Assume that $u\in D^{s,p}_0(E)$ is a sign-changing weak solution to \eqref{2.10}
where $\mu>0$ and $E$ is a (possibly unbounded) domain in $\mathbb{R}^N$.\ Then
\begin{equation}
\label{doublings}
\|u\|^{p^*_s}_{L^{p^*_s}(E)}\geq 2\left(\frac{\mathcal{S}_{p,s}}{\mu}\right)^\frac{N}{sp},\quad
[u]^p_{D^{s,p}(\mathbb{R}^N)}\geq 2\,\mu\left(\frac{\mathcal{S}_{p,s}}{\mu}\right)^\frac{N}{sp},\quad
I_\infty(u)\geq 2\,\mu\,\frac{s}{N}\,\left(\frac{\mathcal{S}_{p,s}}{\mu}\right)^\frac{N}{sp}.
\end{equation}
\end{lemma}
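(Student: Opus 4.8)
The plan is to decompose the sign-changing solution $u$ into its positive and negative parts $u_+ = \max\{u,0\}$ and $u_- = \max\{-u,0\}$, both of which are nonzero by hypothesis, and to show that each of them contributes at least the universal amount appearing in \eqref{snorme} to the relevant quantities. First I would test the weak formulation of \eqref{2.10} against $u_+ \in D^{s,p}_0(E)$ (it is admissible since $u\in D^{s,p}_0(E)$ implies $u_\pm\in D^{s,p}_0(E)$). The key pointwise inequality is that for the nonlocal operator one has
\[
\int_{\mathbb{R}^{2N}} \frac{J_p(u(x)-u(y))\,(u_+(x)-u_+(y))}{|x-y|^{N+sp}}\,dx\,dy \ge [u_+]^p_{D^{s,p}(\mathbb{R}^N)},
\]
which follows because for $a,b\in\mathbb{R}$ one has $J_p(a-b)(a_+-b_+)\ge |a_+-b_+|^p$ — indeed when both $a,b\ge 0$ this is an equality, and when say $a\ge 0 > b$ then $J_p(a-b)(a_+ - b_+) = |a-b|^{p-2}(a-b)\,a \ge a^p = |a_+-b_+|^p$ since $a - b \ge a > 0$; the remaining cases are symmetric or trivial. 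On the right-hand side, $|u|^{p^*_s-2}u\,u_+ = u_+^{p^*_s}$, so we obtain $[u_+]^p_{D^{s,p}(\mathbb{R}^N)} \le \mu\,\|u_+\|_{L^{p^*_s}(E)}^{p^*_s}$.

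Combining this with the Sobolev inequality \eqref{sobolevity} applied to $u_+$ gives $\mathcal{S}_{p,s}\,\|u_+\|_{L^{p^*_s}(E)}^p \le [u_+]^p_{D^{s,p}(\mathbb{R}^N)} \le \mu\,\|u_+\|_{L^{p^*_s}(E)}^{p^*_s}$, and since $\|u_+\|_{L^{p^*_s}(E)}>0$ we may divide to deduce
\[
\|u_+\|_{L^{p^*_s}(E)}^{p^*_s} \ge \left(\frac{\mathcal{S}_{p,s}}{\mu}\right)^{\frac{N}{sp}}, \qquad [u_+]^p_{D^{s,p}(\mathbb{R}^N)} \ge \mu\left(\frac{\mathcal{S}_{p,s}}{\mu}\right)^{\frac{N}{sp}},
\]
exactly as in the derivation of \eqref{snorme}. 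The identical argument with $u_-$ in place of $u_+$ yields the same two lower bounds for $u_-$.

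It remains to add. For the $L^{p^*_s}$ norm this is immediate: $\|u\|_{L^{p^*_s}(E)}^{p^*_s} = \|u_+\|_{L^{p^*_s}(E)}^{p^*_s} + \|u_-\|_{L^{p^*_s}(E)}^{p^*_s} \ge 2\left(\mathcal{S}_{p,s}/\mu\right)^{N/sp}$. For the Gagliardo seminorm one uses that $u$ solves \eqref{2.10}, so $[u]^p_{D^{s,p}(\mathbb{R}^N)} = \mu\,\|u\|_{L^{p^*_s}(E)}^{p^*_s}$, and the bound just obtained for $\|u\|_{L^{p^*_s}(E)}^{p^*_s}$ gives $[u]^p_{D^{s,p}(\mathbb{R}^N)} \ge 2\mu\left(\mathcal{S}_{p,s}/\mu\right)^{N/sp}$. (Note this is where the superadditivity of the seminorm is handled painlessly — by going through the equation rather than trying to split $[\,\cdot\,]_{D^{s,p}}$ directly, which is not additive.) Finally, again using $[u]^p_{D^{s,p}(\mathbb{R}^N)} = \mu\,\|u\|_{L^{p^*_s}(E)}^{p^*_s}$ we get $I_\infty(u) = \left(\frac{1}{p}-\frac{1}{p^*_s}\right)[u]^p_{D^{s,p}(\mathbb{R}^N)} = \frac{s}{N}\,[u]^p_{D^{s,p}(\mathbb{R}^N)} \ge 2\mu\,\frac{s}{N}\left(\mathcal{S}_{p,s}/\mu\right)^{N/sp}$, which is the third assertion in \eqref{doublings}. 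The only genuinely delicate point is verifying the pointwise inequality $J_p(a-b)(a_+-b_+)\ge|a_+-b_+|^p$ and hence that testing against $u_\pm$ separates the energy cleanly; everything else is bookkeeping already carried out in the unsigned case above.
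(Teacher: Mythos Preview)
Your proof is correct and follows essentially the same route as the paper: test the equation against $u_+$ and $-u_-$, use the pointwise inequality $J_p(a-b)(a_+-b_+)\ge|a_+-b_+|^p$ (and its analogue for $u_-$) to bound the seminorms of $u_\pm$ separately, combine with Sobolev to get the universal lower bounds, and then pass through the identity $[u]^p_{D^{s,p}(\mathbb{R}^N)}=\mu\,\|u\|_{L^{p^*_s}}^{p^*_s}$ for the remaining two estimates. Your remark that one should not try to split the Gagliardo seminorm directly is exactly the right observation.
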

\begin{proof}
For $p=2$, see \cite[Lemma 2.5]{sss}.\ In the general case, the heuristic idea is to exploit the fact that $u_\pm:=\max\{\pm u,0\}\in D^{s,p}_0(E)\setminus \{0\}$ are both positive subsolutions of \eqref{2.10}. Thus the above universal estimates hold for both of them separately.
More precisely, it is readily seen that for a.e.\ $(x,y)\in\mathbb{R}^{2N}$
the following inequalities hold
\begin{align*}
&J_p(u(x)-u(y))(u_+(x)-u_+(y))\geq |u_+(x)-u_+(y)|^p, \\
&J_p(u(x)-u(y))(u_-(y)-u_-(x))\geq |u_-(x)-u_-(y)|^p.
\end{align*}
Then, testing equation \eqref{2.10} by $u_+$ (respectively\ $-u_-$) yields
\begin{align*}
[u_+]^p_{D^{s,p}(\mathbb{R}^N)} &\leq \int_{\mathbb{R}^{2N}}\frac{J_p(u(x)-u(y))\,(u_+(x)-u_+(y))}{|x-y|^{N+sp}}dxdy=\mu\int_{E}{(u_+)}^{p^*_s}dx, \\
[u_-]^p_{D^{s,p}(\mathbb{R}^N)} &\leq\int_{\mathbb{R}^{2N}}\frac{J_p(u(x)-u(y))\,(u_-(y)-u_-(x))}{|x-y|^{N+sp}}dxdy=\mu\int_{E}{(u_-)}^{p^*_s}dx.
\end{align*}
As before, we can combine these equalities with $\mathcal{S}_{p,s}\,\|u_\pm\|_{L^{p^*_s}(E)}^p\leq [u_\pm]^p_{D^{s,p}(\mathbb{R}^N)}$ to get
\[
\|u_\pm\|^{p^*_s}_{L^{p^*_s}(E)}\geq \left(\frac{\mathcal{S}_{p,s}}{\mu}\right)^{N/sp}.
\]
By summing up these two inequalities, we get the first estimate in \eqref{doublings}. The second one is then obtained by observing that from the equation we have
\[
[u]^p_{D^{s,p}(\mathbb{R}^N)}=\mu\,\|u\|^{p^*_s}_{L^{p^*_s}(E)}.
\]
Finally, for the third estimate in \eqref{doublings} we observe that from the previous identity
\[
I_\infty(u)=\frac{1}{p}\,[u]^p_{D^{s,p}(\mathbb{R}^N)}- \frac{\mu}{p^*_s}\,\|u\|^{p^*_s}_{L^{p^*_s}(E)}=
\mu\,\left(\frac{1}{p}-\frac{1}{p^*_s}\right)\,\|u\|^{p^*_s}_{L^{p^*_s}(E)}\geq 2\,\mu\,\frac{s}{N}\,\left(\frac{\mathcal{S}_{p,s}}{\mu}\right)^{N/sp},
\]
which completes the proof.
\end{proof}


\medskip
\section{Proof of Theorem \ref{Theorem3}}

\noindent
We divide the proof into five steps.
\vskip4pt
\noindent
{\bf $\RHD$ Step 1.} We first observe that the Palais-Smale sequence $\{u_n\}_{n\in\mathbb{N}}$ is bounded in $D_0^{s,p}(\Omega)$. In fact, by hypothesis we have
\begin{equation}
\label{In}
I(u_n) =\frac{1}{p}\,[u_n]^p_{D^{s,p}(\mathbb{R}^N)}+\frac{1}{p}\,\int_\Omega a\,|u_n|^pdx-\frac{\mu}{p^*_s}\,\int_\Omega|u_n|^{p_s^*}\,dx=c+o_n(1), \\
\end{equation}
and
\[
[u_n]^p_{D^{s,p}(\mathbb{R}^N)}
+\int_\Omega a\,|u_n|^{p}dx-\mu\,\int_\Omega |u_n|^{p_s^\ast}\,dx=\langle I'(u_n),u_n\rangle=o_n(1)\,[u_n]_{D^{s,p}(\mathbb{R}^N)}, 
\]
as $n\to\infty$, which yields
\begin{equation}\label{I'}
\mu\left(\frac{1}{p}-\frac{1}{p^*_s}\right)\int_\Omega|u_n|^{p^*_s}dx=I(u_n)-\frac{1}{p}\langle I'(u_n),u_n\rangle\leq c+1+o_n(1)\,[u_n]_{D^{s,p}(\mathbb{R}^N)}.
\end{equation}
In turn, by H\"older inequality and \eqref{I'}, with simple manipulations it follows
\begin{equation}
\label{I''}
\left|\int_\Omega a\,|u_n|^pdx\right| \leq \|a\|_{L^{N/sp}(\Omega)}\left(\int_\Omega |u_n|^{p_s^*}dx\right)^\frac{p}{p^*_s}
\leq C+o_n(1)\,[u_n]_{D^{s,p}(\mathbb{R}^N)},
\end{equation}
where $C>0$ depends on $N,s,p,\mu,c$ and the norm of $a$, but not on $n$.
Whence, from \eqref{In}, \eqref{I'} and \eqref{I''}, we infer, as $n\to\infty$
$$
[u_n]^p_{D^{s,p}(\mathbb{R}^N)}\leq C+o_n(1)[u_n]_{D^{s,p}(\mathbb{R}^N)},
$$
which shows the boundedness in $D^{s,p}_0(\Omega)$.
Hence, passing if necessary to a subsequence, we have
$u_n\rightharpoonup v^0$ in $D_0^{s,p}(\Omega)$ and $u_n\to v^0$ almost everywhere in $\Omega$.
By Lemma \ref{proof4}, it follows that $I'(v_0)=0$ and $u_n^1:=u_n-v^0\in D^{s,p}_0(\Omega)$ is 
a Palais-Smale sequence for $I_\infty$ at level $c-I(v_0)$, and
\[
[u_n^1]^p_{D^{s,p}(\mathbb{R}^N)}=[u_n]^p_{D^{s,p}(\mathbb{R}^N)}-[v_0]^p_{D^{s,p}(\mathbb{R}^N)}+o_n(1),\qquad \mbox{ as } n\to \infty.
\]
\vskip.2cm
\noindent
{\bf $\RHD$ Step 2.} If $u_n^1\to 0$ in $L^{p^*_s}(\mathbb{R}^N)$ up to a subsequence, since $I_\infty'(u_n^1)\to 0$ in $D^{-s,p'}(\Omega)$
we have 
\[
[u_n^1]^p_{D^{s,p}(\mathbb{R}^N)}-\mu\,\int_{\mathbb{R}^N} |u_n^1|^{p_s^\ast}\,dx=\langle I_\infty'(u_n^1),u_n^1\rangle=o_n(1)\,[u_n^1]_{D^{s,p}(\mathbb{R}^N)}.
\]
Since this sequence is bounded in $D^{s,p}_0(\Omega)$, this yields that $[u_n^1]_{D^{s,p}(\mathbb{R}^N)}\to 0$ as $n$ goes to $\infty$, thus
completing the proof. Let us now suppose that $\{u_n^1\}_{n\in{\mathbb N}}$ does not converge to 
$0$ in $L^{p^*_s}(\mathbb{R}^N)$. Then, up to a subsequence, we have
\[
\inf_{n\in\mathbb{N}} \int_{\mathbb{R}^N} |u_n^1|^{p^*_s}\, dx:=\delta_0>0.
\]
We now take $0<\delta<\delta_0$, to be specified later on, and introduce the {\it Levy concentration function}
$$
Q_n(r):=\sup_{\xi\in\mathbb{R}^N}\int_{B_r(\xi)}|u_n^1|^{p^*_s}\,dx,\quad\, r\geq 0, \,\, n\in\mathbb{N}.
$$
For all $n\in\mathbb{N}$, the function $r\mapsto Q_n(r)$ is continuous on $\mathbb{R}_+$ (see Lemma \ref{lm:levy} below).
This and the fact that $Q_n(0)=0$ and $Q_n(\infty)>\delta$ imply the existence of
$\{\lambda_n^1\}_{n\in\mathbb{N}}\subset\mathbb{R}_+$ such that
\[
Q_n(\lambda_n^1)=\sup_{\xi\in\mathbb{R}^N}\int_{B_{\lambda_n^1}(\xi)}|u_n^1|^{p^*_s}\,dx=\delta.
\]
Moreover, since $|u_n|^{p^*_s}$ vanishes outside $\Omega$, still by Lemma \ref{lm:levy} we know that
\[
\delta=Q_n(\lambda_n^1)=\int_{B_{\lambda_n^1}(z_n^1)}|u_n^1|^{p^*_s}\,dx,\quad \mbox{ for some } z_n^1\in\{x\in\mathbb{R}^N\, :\, \mathrm{dist}(x,\Omega)\le \lambda^1_n\}.
\]
Before proceeding further, we record the following observation: since if $\lambda^1_n\ge \mathrm{diam} (\Omega)$, then 
\[
Q_n(\lambda^1_n)=\sup_{\xi\in\mathbb{R}^N}\int_{B_{\lambda_n^1}(\xi)}|u_n^1|^{p^*_s}\,dx=\int_{\Omega}|u_n^1|^{p^*_s}\,dx>\delta=Q_n(\lambda^1_n),
\]
we obtain that the sequence $\{\lambda^1_n\}_{n\in\mathbb{N}}$ is bounded. This in turn implies that $\{z_n^1\}_{n\in\mathbb{N}}$ is bounded as well, by construction.  We consider now the sequence $v_n^1:\Omega_n\to \mathbb{R}$ defined by
$$
v_n^1(x):=(\lambda_n^1)^{\frac{N-s\,p}{p}}\,u_n^1(\lambda_n^1\,x+z_n^1),\qquad \Omega_n:=\frac{1}{\lambda_n^1}(\Omega-z_n^1)
$$
In light of Lemma~\ref{proof7} the sequence $\{v_n^1\}_{n\in\mathbb{N}}$ is bounded in $D^{s,p}_0(\mathbb{R}^N)$ (because so is $\{u_n^1\}_{n\in\mathbb{N}}$) and thus
we can assume that 
\[
v_n^1\rightharpoonup v^1\ \mbox{ in } D^{s,p}_0(\mathbb{R}^N),\qquad v_n^1\to v^1
\mbox{ in } L_{{\rm loc}}^\sigma(\mathbb{R}^N)\ \mbox{ for every } \sigma\in [1,p^*_s),
\] 
and 
\[
v_n^1\to v^1,\quad \mbox{ a.e. on }\mathbb{R}^N,
\]
up to a subsequence. Observe also that
\begin{equation}
\label{delta}
\delta=\int_{B_{\lambda_n^1}(z_n^1)} |u_n^1|^{p^*_s}\,dx=\int_{B_1(0)}|v_n^1|^{p^*_s}\,dx=\sup_{z\in\mathbb{R}^N}\int_{B_1(z)}|v_n^1|^{p^*_s}\,dx,
\end{equation}
and this in turn implies that 
\begin{equation}
\label{porcanna}
|B_{\lambda_n^1}(z_n^1)\cap \Omega|>0.
\end{equation}
\vskip4pt
\noindent
{\bf $\RHD$ Step 3.} 
The argument that we exploit in this step is substantially
different from the argument originally devised by Struwe in \cite{struwe}, requiring a delicate 
extension procedure on the sequence of approximate solutions. We rather follow a related argument contained in \cite{clapp}.
\vskip.2cm
We claim that the limit $v_1$ found at the previous Step 2 is $v_1\not=0$. Suppose by contradiction that $v_1=0$ almost everywhere. Then, we would have that $v_n^1\to0$
in $L_{{\rm loc}}^\sigma(\mathbb{R}^N)$, for every $\sigma\in [1,p^*_s)$.
Let $h\in C^\infty_0(\mathbb{R}^N)$ be positive and such that 
\begin{equation}
\label{location}
{\rm supp}(h)\subset B_1(z)\subset B_{3/2}(0),\quad\text{for an arbitrary $z\in B_{1/2}(0)$.}
\end{equation}
We now recall that for functions in $D^{s,p}_0(B_{3/2}(0))$ the following Sobolev inequality holds (see \cite[Proposition 2.3]{BP} with the choices $r=3/2$ and $R=2$ there)
\begin{equation}
\label{apri!}
\left(\int_{B_{3/2}(0)} |u|^{p^*_s}\, dx\right)^\frac{p}{p^*_s}\le \mathcal{T}\,[u]^p_{D^{s,p}(B_2(0))},
\end{equation}
for a constant $\mathcal{T}=\mathcal{T}(N,s,p)>0$.
By the H\"{o}lder inequality and \eqref{apri!},
since $h\, v_n^1\in D^{s,p}_0(B_{3/2}(0))$, it follows that
\begin{equation}
\begin{split}
\label{stimasupp}
\int_{\mathbb{R}^N}h^p\,|v_n^1|^{p^*_s}\,dx & \leq \left(\int_{B_1(z)}|v_n^1|^{p^*_s}\,dx\right)^{\frac{s\,p}{N}}\left(\int_{B_{3/2}(0)}\left(h\,|v_n^1|\right)^{p^*_s}\,dx\right)^\frac{p}{p^*_s} \\
&\leq  \mathcal{T}\,\left(\int_{B_1(z)}|v_n^1|^{p^*_s}\,dx\right)^{\frac{s\,p}{N}}\,\big[ h\,v_n^1\,\big]^p_{D^{s,p}(B_2(0))},
\end{split}
\end{equation}
for some positive constant $\mathcal{T}$ depending only on $N,s,p$. We now observe that by the very definition of $I'_\infty$
\begin{equation*}
\begin{split}
\int_{\mathbb{R}^{2\,N}}&\frac{J_p(v_n^1(x)-v_n^1(y))(\varphi(x)-\varphi(y))}{|x-y|^{N+s\,p}}\,dx\,dy\\
&=\mu\int_{\mathbb{R}^N} |v_n^1|^{p^*_s-2} v_n^1\,\varphi \,dx+\langle I'_\infty(v_n^1),\varphi\rangle, \,\,\quad \mbox{for any }\varphi\in D^{s,p}_0(\Omega_n).
\end{split}
\end{equation*}
Then, by applying Proposition~\ref{prop:caccioloc} for every $n\in\mathbb{N}$ with the choices  
\[
\Omega:=\Omega_n, \quad \Omega':=B_2(0),\quad
u:=v^1_n,\quad \psi:=h,\quad F:=\mu\,|v^1_n|^{p^*_s-2}\,v^1_n+I'_\infty(v^1_n),
\]
we get
\begin{equation}
\label{casto}
\begin{split}
\int_{B_2(0)\times B_2(0)} &\frac{\big|v^1_n(x)\,h(x)-v^1_n(y)\,h(y)\big|^{p}\,}{|x-y|^{N+s\,p}}\, dx\, dy\\
&\le \mathcal{C}\,\int_{B_2(0)\times B_2(0)} \frac{|h(x)-h(y)|^p}{|x-y|^{N+s\,p}}\,\Big(|v^1_n(x)|^p+|v^1_n(y)|^p\Big)\, dx\,dy\\
&+\mathcal{C}\,\left(\sup_{y\in B_{3/2}(0)} \int_{\mathbb{R}^N\setminus B_2(0)} \frac{|v^1_n(x)|^{p-1}}{|x-y|^{N+s\,p}}\,dx\right)\, \int_{B_{3/2}(0)} |v^1_n|\, h^p\, dx\\
&+\mathcal{C}\,\int_{B_{3/2}(0)} h^p\,|v^1_n|^{p^*_s}\,dx +\mathcal{C}\,\Big|\langle I'_\infty(v^1_n),v^1_n\,h^p\rangle\Big|.
\end{split}
\end{equation}
Observe that thanks to \eqref{delta}, we know that $B_2(0)\cap \Omega_n$ in a non-empty open set.
We proceed to estimate the terms on the right-hand side of \eqref{casto}. For the first term on the right-hand side, we have
\[
\begin{split}
\int_{B_2(0)\times B_2(0)} &\frac{|h(x)-h(y)|^p}{|x-y|^{N+s\,p}}\,\Big(|v^1_n(x)|^p+|v^1_n(y)|^p\Big)\, dx\,dy\\
&\le \|\nabla h\|_{L^\infty}^p\,\int_{B_2(0)}\left(\int_{B_2(0)} \frac{dy}{|x-y|^{N+s\,p-p}}\right)\,|v^1_n(x)|^p\,dx\\
&+\|\nabla h\|_{L^\infty}^p\,\int_{B_2(0)}\left(\int_{B_2(0)} \frac{dx}{|x-y|^{N+s\,p-p}}\right)\,|v^1_n(y)|^p\,dy=o_n(1),
\end{split}
\]
thanks to the local strong $L^p$ convergence to $0$ of $\{v^1_n\}_{n\in{\mathbb N}}$. For the second term on 
the right-hand side of \eqref{casto}, we observe that for the same reason we have
\[
\int_{B_{3/2}(0)} |v^1_n|\, h^p\, dx=o_n(1),
\]
while by H\"older inequality, for every $y\in B_{3/2}(0)$ we get
\[
\begin{split}
\int_{\mathbb{R}^N\setminus B_2(0)} \frac{|v^1_n(x)|^{p-1}}{|x-y|^{N+s\,p}}\,dx&\le \left(\int_{\mathbb{R}^N} |v^1_n|^{p^*_s}\,dx\right)^\frac{p-1}{p^*_s}\\
&\times\left(\int_{\mathbb{R}^N\setminus B_2(0)} |x-y|^{-(N+s\,p)\,\frac{p^*_s}{p^*_s-p+1}}\,dx\right)^\frac{p^*_s-p+1}{p^*_s},
\end{split}
\]
which is uniformly bounded. For the third term, by using inequality \eqref{stimasupp}, and recalling \eqref{delta} and \eqref{location}, we have
\[
\int_{B_{3/2}(0)} h^p\,|v^1_n|^{p^*_s}\,dx\le \mathcal{T}\,\left(\int_{B_1(z)}|v_n^1|^{p^*_s}\,dx\right)^{\frac{s\,p}{N}}\,\big[ h\,v_n^1\big]^p_{D^{s,p}(B_2(0))}\leq 
\mathcal{T}\,\delta^{\frac{s\,p}{N}}\,\big[ h\,v_n^1\big]^p_{D^{s,p}(B_2(0))}
\]
For the last term, since $I_\infty'(u_n^1)\to 0$,
we learn from ($a_2$) of Lemma~\ref{proof7} that 
\[
\sup_{\varphi\in  D^{s,p}_0(\Omega_n)}\Big|\Big\langle I'_\infty(v^1_n),\frac{\varphi}{[\varphi]_{D^{s,p}(\mathbb{R}^N)}}\Big\rangle\Big|=o_n(1),
\] 
thus in particular $|\langle I'_\infty(v_n^1),h^p\,v^1_n\rangle|=o_n(1),$
since the sequence $\{h^p\,v_n^1\}_{n\in\mathbb{N}}$ is bounded in $D^{s,p}_0(\Omega_n)$ in view of Lemma \ref{Bddlemma} (recall that $\{v^1_n\}_{n\in\mathbb{N}}$ is bounded in $D^{s,p}_0(\Omega_n)$).
By introducing the previous estimates in \eqref{casto}, we thus get
\[
\begin{split}
\big[ h\,v_n^1\big]^p_{D^{s,p}(B_2(0))}\le \mathcal{C}\,\mathcal{T}\,\delta^\frac{s\,p}{N}\,\big[ h\,v_n^1\big]^p_{D^{s,p}(B_2(0))}+o_n(1),
\end{split}
\]
where we recall that $\mathcal{C}$ is the constant appearing in the Caccioppoli inequality of Proposition \ref{prop:caccioloc} and this depends on $p$ only.
By choosing\footnote{Observe in particular that $\delta$ depends on $N,s,p,\mu$ and $\delta_0$ only. Also observe that we can always suppose $\delta_0<1$.} 
\[
\delta=\min\left\{\frac{1}{2\,\mathcal{C}\,\mathcal{T}},\,\frac{\delta_0}{2}\right\}^\frac{N}{s\,p},
\] 
from the previous inequalities we obtain 
\[
[h\,v_n^1]_{D^{s,p}(B(0,2))}=o_n(1),\qquad \mbox{ as } n\to\infty.
\] 
By using again the Sobolev inequality \eqref{apri!}, this in turn implies
$$
\int_{B_{3/2}(0)}\left(h\,|v_n^1|\right)^{p^*_s}\,dx=o_n(1).
$$
By arbitrariness of $h\in C^\infty_0(B_1(z))$, we obtain that $\{v_n^1\}_{n\in\mathbb{N}}$ converges to zero in $L^{p^*_s}_{{\rm loc}}(B_1(z))$.
Finally, taking into account the condition \eqref{location} and the arbitrariness of $z\in B_{1/2}(0)$, we obtain
that $\{v_n^1\}_{n\in\mathbb{N}}$ converges to zero in $L^{p^*_s}(B_1(0))$,
which contradicts \eqref{delta}. Hence, $v^1\not=0$.
\vskip6pt
\noindent
{\bf $\RHD$ Step 4.} We have already seen in {\bf Step 2} that the sequences $\{z_n^1\}_{n\in\mathbb{N}}$ and $\{\lambda^1_n\}_{n\in\mathbb{N}}$ are bounded, thus we may assume that $z_n^1\to z_0^1\in \mathbb{R}^N$ and $\lambda_n^1\to\lambda_0^1\geq0$. If $\lambda_0^1>0$ then as a consequence
of the fact that $u_n^1\rightharpoonup 0$ in $D_0^{s,p}(\Omega)$, we have $v_n^1\rightharpoonup 0$ in $D^{s,p}_0(\mathbb{R}^N)$
by Lemma~\ref{weak-c} and this is impossible by the previous {\bf Step 3}. Thus $\lambda_n^1\to 0$ and by construction this implies 
\[
\lim_{n\to\infty} \mathrm{dist}(z_n^1,\partial\Omega)=0\qquad \mbox{ and }\qquad z_0^1\in\overline\Omega. 
\]
We now distinguish two cases: 
\[
 \mbox{ either }\quad \lim_{n\to\infty}\frac{1}{\lambda_n^1}\mathrm{dist}(z_n^1,\partial \Omega)=\infty\quad \mbox{ or }\quad \liminf_{n\to\infty}\frac{1}{\lambda_n^1}\mathrm{dist}(z_n^1,\partial \Omega)<\infty.
\]
In the first case, by Lemma \ref{proof5} we have $I_\infty'(v^1)=0$ so that
\[
(- \Delta)_p^s\, v^1  =  \mu\,|v^1|^{p_s^\ast - 2}\,v^1,\qquad \mbox{ in } \mathbb{R}^N.
\]
Moreover, by recalling \eqref{porcanna}, we obtain that 
\[
z_n^1\in \{x\in \Omega\, :\, \mathrm{dist}(x,\partial\Omega)\ge \lambda^1_n\},
\]
for $n$ sufficiently large.
In the second case, by Lemma \ref{proof6} we would have $v_1\in D^{s,p}_0(\mathcal{H})$ for a suitable half-space $\mathcal{H}$ and
\begin{equation*}
\begin{cases}
(- \Delta)_p^s\, v^1  =  \mu\,|v^1|^{p_s^\ast - 2}\,v^1, & \text{ in }\mathcal{H}, \\
v^1=0, & \text{ in } \mathbb{R}^N\setminus \mathcal{H}.
\end{cases}
\end{equation*}
On account of Assumption {\bf (NA)}, this case is ruled out.
\par
We set $\varrho^1_n=\mathrm{dist}(z_n^1,\partial\Omega)/2$ and take $\zeta\in C^\infty_0(B_2(0))$ a standard cut-off function, such that $\zeta\equiv 1$ on $B_1(0)$. We consider the sequence
\[
u_n^2(z):=u_n^1(z)-(\lambda_n^1)^{\frac{s\,p-N}{p}}\,v^1
\left(\frac{z-z_n^1}{\lambda_n^1}\right)\,\zeta\left(\frac{z-z_n^1}{\varrho_n^1}\right)\in D^{s,p}_0(\Omega),
\]
by construction we have that $\lambda_n^1/\varrho_n^1$ converges to $0$, as $n$ goes to $\infty$. Thus Lemma \ref{proof5} assures that $\{u_n^2\}_{n\in\mathbb{N}}$ is a Palais-Smale sequence for $I_\infty$ at the energy level $c-I(v^0)-I_\infty(v^1)$ such that 
\begin{equation*}
 [u_n^2]^p_{D^{s,p}(\mathbb{R}^N)}=[u_n]^p_{D^{s,p}(\mathbb{R}^N)}-[v^0]^p_{D^{s,p}(\mathbb{R}^N)}-[v^1]^p_{D^{s,p}(\mathbb{R}^N)}+o_n(1).
\end{equation*}
\vskip4pt
\noindent
{\bf $\RHD$ Step 5.} We can iterate the previous construction to cook-up a sequence $\{v^k\}_{k\in\mathbb{N}}$ of critical points of $I_\infty$ and, for every $k\in \mathbb{N}$, sequences $\{z_n^k\}_{n\in\mathbb{N}}$, $\{\lambda^k_n\}_{n\in\mathbb{N}}$, $\{\varrho_n^k\}_{n\in\mathbb{N}}$ and $\{u^k_n\}_{n\in\mathbb{N}}\subset D^{s,p}_0(\Omega)$ with
\[
u_n^k(z):=u^1_n(z)-\sum_{i=1}^{k-1}(\lambda_n^i)^{\frac{s\,p-N}{p}}v^i
\Big(\frac{z-z_n^i}{\lambda_n^i}\Big)\,\zeta\left(\frac{z-z_n^i}{\varrho_n^i}\right),
\]
where $\zeta$ is the same cut-off function as above.
By construction, we have that $\{u^k_n\}_{n\in\mathbb{N}}$
is a Palais-Smale sequence for $I_\infty$ at the energy level
$$
c-I(v^0)-\sum_{i=1}^{k-1} I_\infty(v^i),
$$
and, furthermore,
$$
[u_n^k]^p_{D^{s,p}(\mathbb{R}^N)}=[u_n]^p_{D^{s,p}(\mathbb{R}^N)}-\sum_{i=0}^{k-1}[v^i]^p_{D^{s,p}(\mathbb{R}^N)}+o_n(1).
$$
Observe that each $v^1,\dots, v^{k-1}$ is a critical point of $I_\infty$, thus from \eqref{snorme} we get
\[
[u_n^k]^p_{D^{s,p}(\mathbb{R}^N)}\le [u_n]^p_{D^{s,p}(\mathbb{R}^N)}-[v^0]^p_{D^{s,p}(\mathbb{R}^N)}-(k-1)\,\mu\,\left(\frac{\mathcal{S}_{p,s}}{\mu}\right)^\frac{N}{s\,p}+o_n(1),
\]
which implies that this iterative construction must stop at some $k_0\in\mathbb{N}$. As at the beginning of {\bf Step 2}, this means that $[u_n^{k_0}]_{D^{s,p}(\mathbb{R}^N)}\to 0$ as $n$ goes to $\infty$. This in turn yields \eqref{decomposizione}, \eqref{decomposizione2} and \eqref{decomposizione3}, as desired. \qed

\vskip8pt
\noindent
In {\bf Step 2} above we used the following result, which is well-known. We record its proof for the sake of completeness.
\begin{lemma}
\label{lm:levy}
Let $f\in L^1(\mathbb{R}^N)$, then its Levy concentration function 
$$
Q_f(r):=\sup_{\xi\in\mathbb{R}^N}\int_{B_r(\xi)}|f|\,dx,\quad\, r\geq 0,
$$
is a continuous function. If $f\equiv 0$ outside a bounded set $K$ with smooth boundary, then for every $r\ge 0$ the supremum in the definition of $Q_f(r)$ is actually a maximum. More precisely, we have
\[
Q_f(r):=\max_{\xi\in K_r}\int_{B_r(\xi)}|f|\,dx,\qquad \mbox{ with } K_r=\{x\in\mathbb{R}^N\, :\, \mathrm{dist}(x,\overline K)\le r\}.
\]
\end{lemma}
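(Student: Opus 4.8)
The plan is to treat the two assertions separately, reducing the continuity of $Q_f$ to the absolute continuity of the Lebesgue integral of $|f|$, and the attainment of the supremum to the continuity of translations in $L^1$.

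For the first assertion I would set $g(r,\xi):=\int_{B_r(\xi)}|f|\,dx$, so that $Q_f(r)=\sup_{\xi\in\mathbb{R}^N}g(r,\xi)$, and observe that $r\mapsto g(r,\xi)$ is non-decreasing with
\[
0\le g(r_2,\xi)-g(r_1,\xi)=\int_{B_{r_2}(\xi)\setminus B_{r_1}(\xi)}|f|\,dx,\qquad 0\le r_1<r_2.
\]
The key point is that the measure of the annular region $B_{r_2}(\xi)\setminus B_{r_1}(\xi)$ equals $|B_1(0)|\,(r_2^N-r_1^N)$, hence it is independent of $\xi$; by the absolute continuity of the integral of $|f|\in L^1(\mathbb{R}^N)$, the family $\{r\mapsto g(r,\xi)\}_{\xi\in\mathbb{R}^N}$ is then equicontinuous, with a modulus of continuity that does not depend on $\xi$. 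Since $|\sup_\xi a(\xi)-\sup_\xi b(\xi)|\le\sup_\xi|a(\xi)-b(\xi)|$, this equicontinuity is inherited by the supremum, so that $Q_f$ is continuous on $[0,\infty)$.

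For the second assertion, assume $f\equiv 0$ outside a bounded set $K$. If $\mathrm{dist}(\xi,\overline K)>r$, then $B_r(\xi)\cap\overline K=\emptyset$ and $g(r,\xi)=0$; hence the supremum defining $Q_f(r)$ may be restricted to the compact set $K_r=\{x\in\mathbb{R}^N:\mathrm{dist}(x,\overline K)\le r\}$, i.e.\ $Q_f(r)=\sup_{\xi\in K_r}g(r,\xi)$. It then suffices to check that $\xi\mapsto g(r,\xi)$ is continuous: writing $g(r,\cdot)=\mathbf{1}_{B_r(0)}*|f|$ as the convolution of the bounded kernel $\mathbf{1}_{B_r(0)}$ with $|f|\in L^1(\mathbb{R}^N)$, the continuity of the translation operator in $L^1(\mathbb{R}^N)$ yields $|g(r,\xi_1)-g(r,\xi_2)|\to 0$ as $\xi_1-\xi_2\to 0$. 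A continuous function on the compact set $K_r$ attains its maximum, which is precisely the claim. The smoothness of $\partial K$ is not needed in the argument; it only serves to describe conveniently, in the applications, the closure of the set where $f$ may be nonzero.

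The argument is essentially routine and I do not expect a genuine obstacle; the only mild subtlety is the twofold bookkeeping just highlighted, namely keeping the modulus of continuity of $r\mapsto g(r,\xi)$ uniform in $\xi$ before passing to the supremum, and restricting the supremum to a compact set before invoking continuity in $\xi$.
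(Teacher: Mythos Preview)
Your proof is correct. Both assertions are handled cleanly, and in fact your argument for continuity is somewhat more direct than the paper's. The paper first observes that $Q_f$ is monotone and lower semicontinuous (as a supremum of continuous functions), then argues by contradiction: if $Q_f$ had a jump at $r_0$, one could find, for each $r>r_0$, a center $\xi_0=\xi_0(r)$ with $\int_{B_r(\xi_0)\setminus B_{r_0}(\xi_0)}|f|\ge \varepsilon/2$, which contradicts absolute continuity since the annulus has measure $|B_1|(r^N-r_0^N)\to 0$. You instead extract the same key fact---that the annulus measure is independent of $\xi$---to get equicontinuity of the family $\{g(\cdot,\xi)\}_\xi$ directly from absolute continuity, and then pass to the supremum via $|\sup_\xi a-\sup_\xi b|\le \sup_\xi|a-b|$. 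This avoids the detour through semicontinuity and monotonicity and yields continuity in one stroke. For the second assertion the two arguments coincide: restrict to the compact $K_r$ where $g(r,\cdot)$ can be nonzero, and use continuity of $\xi\mapsto g(r,\xi)$ (your convolution/translation formulation makes this explicit). Your remark that smoothness of $\partial K$ is not needed is also correct.
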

\begin{proof}
The function $Q_f$ is monotone non decreasing. Observe that for every $\xi\in \mathbb{R}^N$, the function
\[
r\mapsto \int_{B_r(\xi)}|f|\,dx
\]
is continuous, then $Q_f$ is lower semicontinuous as a supremum of continuous functions. Let us suppose that there exists $r_0>0$ such that 
\[
\ell^+:=\lim_{r\to r_0^+} Q_f(r)\not=\lim_{r\to r_0^-} Q_f(r)=:\ell^-. 
\]
By monotonicity and lower semicontinuity of $Q_f$, this means that $\ell^+>\ell^-= Q_f(r_0)$. Let us set $\varepsilon=\ell^+-Q_f(r_0)$, then for every $r>r_0$ we have
\[
Q_f(r)-Q_f(r_0)\ge \varepsilon.
\]
By definition of $Q_f$, we can then choose $\xi_0=\xi_0(\varepsilon,r)\in\mathbb{R}^N$ such that
\[
\frac{\varepsilon}{2}\le \int_{B_{r}(\xi_0)}|f|\,dx-\int_{B_{r_0}(\xi_0)} |f|\,dx=\int_{B_{r}(\xi_0)\setminus B_{r_0}(\xi_0)} |f|\,dx .
\]
Since the measure of the annulus $B_{r}(\xi_0)\setminus B_{r_0}(\xi_0)$ converges to $0$ as $r\searrow r_0$, this gives the desired contradiction.
Let us now assume that $f= 0$ almost everywhere in $\mathbb{R}^N\setminus K$. For every $r>0$ the function 
\[
\xi\mapsto \int_{B_r(\xi)}|f|\,dx,
\]
is continuous and it vanishes if $B_r(\xi)\subset \mathbb{R}^N\setminus K$. This happens if $\mathrm{dist}(\xi,K)>r$ and we conclude the proof.
\end{proof}

\begin{remark}
	We observe that if the level $c$ satisfies
	\begin{equation}
	\label{staisotto?}
	c<2\,\frac{s}{N}\,\mu\,\left(\frac{\mathcal{S}_{p,s}}{\mu}\right)^{N/sp}.
	\end{equation}
	then $k$ in Theorem \ref{Theorem3} is either $0$ ({\it compactness holds}) or $k=1$ ({\it compactness fails}). In the second case, the unique function $v^1$ must have constant sign and be different from $0$ almost everywhere. Indeed, let us assume \eqref{staisotto?} and
	observe that $I(v^0)\ge 0$, since $v_0$ is a critical point of $I$.
	If we suppose that $v^1$ is sign-changing, from Lemma \ref{signchanging} and the decomposition \eqref{decomposizione3} we would get
\[
c=I(v^0)+I_\infty(v^1)\ge 2\,\mu\,\frac{s}{N}\,\left(\frac{\mathcal{S}_{p,s}}{\mu}\right)^{N/sp},
\]
thus contradicting \eqref{staisotto?}. This implies that $v^1$ has constant sign and we can conclude that $v^1\not =0$ almost everywhere, thanks to Proposition \ref{minimumP}.
\end{remark}

We say that  $\{u_n\}_{n\in\mathbb{N}}\subset D_0^{s,p}(\Omega)$ is a {\em Palais-Smale sequence with sign} for $I$ 
at level $c$ if it is a Palais-Smale sequence and 
\[
\lim_{n\to\infty} \|(u_n)_-\|_{L^{p^*_s}(\Omega)}=0. 
\]
\noindent
With minor modifications in the proof of Theorem~\ref{Theorem3}, we can get the following variant for Palais-Smale sequences with sign. We leave the details to the reader.
\begin{theorem}
We assume hypothesis {\bf (NA)}.  Let $1<p<\infty$ and $s\in(0,1)$ be such that $N>s\,p$. Let $\Omega\subset\mathbb{R}^N$ be an open bounded set with smooth boundary. Let $\{u_n\}_{n\in\mathbb{N}}\subset D_0^{s,p}(\Omega)$ be a Palais-Smale sequence with sign for the functional $I$ defined in \eqref{I}  at level $c$. 
\par
Then there exist: 
\begin{itemize}
\item a (possibly trivial) non-negative solution $v_0\in D_0^{s,p}(\Omega)$ of
\[
(- \Delta)_p^s\, u +a\,|u|^{p-2}\,u = \mu\,u^{p_s^\ast - 1},\qquad \mbox{ in } \Omega, 
\]
\item a number $k\in\mathbb{N}$ and $v^1,v^2\cdots,v^k\in D^{s,p}({\mathbb R}^N)\setminus\{0\}$ positive solutions of
\[
(- \Delta)_p^s\, u  =  \mu\,u^{p_s^\ast - 1},\qquad \text{ in } {\mathbb R}^N.
\]
\vskip.2cm
\item a sequence of positive real numbers $\{\lambda_n^i\}_{n\in\mathbb{N}}\subset \mathbb{R}_+$ with $\lambda_n^i\to 0$ and a sequence of points $\{z_n^i\}_{n\in\mathbb{N}}\subset \{x\in\Omega\, :\, \mathrm{dist}(x,\partial\Omega)\ge \lambda^i_n\}$, for $i=1,\dots,k$;
\end{itemize}
\vskip.2cm
such that, up to a subsequence,  conclusions \eqref{decomposizione}, \eqref{decomposizione2} and \eqref{decomposizione3} follow.
\end{theorem}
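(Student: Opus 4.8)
The plan is to repeat the five-step scheme of the proof of Theorem~\ref{Theorem3} essentially unchanged, superimposing on it the bookkeeping needed to carry the sign condition $\|(u_n)_-\|_{L^{p^*_s}}\to 0$ through every extraction. The only genuinely new ingredient is that, at each stage, the ``almost non-negative'' character of the current Palais-Smale sequence is inherited by the next one, and that it forces every concentration profile to be non-negative, hence --- by the strong minimum principle of Proposition~\ref{minimumP} --- strictly positive.

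In {\bf Step 1}, from $u_n\rightharpoonup v^0$ in $D^{s,p}_0(\Omega)$, $u_n\to v^0$ a.e., and $(u_n)_-\to 0$ in $L^{p^*_s}$ one gets $(v^0)_-=0$ a.e., i.e. $v^0\ge 0$, so that $v^0$ solves $(-\Delta)^s_p u+a\,u^{p-1}=\mu\,u^{p^*_s-1}$ in $\Omega$. One then checks that $u^1_n:=u_n-v^0$, which is Palais-Smale for $I_\infty$ at level $c-I(v^0)$ by Lemma~\ref{proof4}, still satisfies $\|(u^1_n)_-\|_{L^{p^*_s}}\to 0$: on $\{u_n\ge 0\}$ one has $(u^1_n)_-=(v^0-u_n)_+\le v^0$, which tends to $0$ a.e.; on $\{u_n<0\}$ one has $(u^1_n)_-=v^0+(u_n)_-$, and $(v^0)^{p^*_s}\,\mathbf{1}_{\{u_n<0\}}\to 0$ a.e.; in both regions dominated convergence with majorant $(v^0)^{p^*_s}\in L^1$, together with $\|(u_n)_-\|_{L^{p^*_s}}\to 0$, gives the claim. {\bf Steps 2--4} --- the Levy concentration function, the Caccioppoli estimate of Proposition~\ref{prop:caccioloc}, the non-triviality argument, and the dichotomy ``interior versus boundary concentration'', the latter excluded by {\bf (NA)} --- go through word for word; the extra observation is that, applying Lemma~\ref{proof7} to the negative part, the rescaled sequence $v^1_n(x)=(\lambda^1_n)^{\frac{N-sp}{p}}u^1_n(\lambda^1_n x+z^1_n)$ obeys $\|(v^1_n)_-\|_{L^{p^*_s}(\mathbb R^N)}=\|(u^1_n)_-\|_{L^{p^*_s}(\Omega)}\to 0$, whence the weak/a.e. limit satisfies $v^1\ge 0$. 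As a non-trivial non-negative solution of $(-\Delta)^s_p u=\mu\,u^{p^*_s-1}$ in $\mathbb R^N$, $v^1$ is strictly positive a.e. by Proposition~\ref{minimumP}; in particular $v^1$ equals its positive part and the truncated bubble $\widetilde{w}^1_n(z)=(\lambda^1_n)^{\frac{sp-N}{p}}v^1\!\big(\tfrac{z-z^1_n}{\lambda^1_n}\big)\,\zeta\!\big(\tfrac{z-z^1_n}{\varrho^1_n}\big)$ is non-negative.

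The inductive core is to show that $u^{k+1}_n:=u^k_n-\widetilde{w}^k_n$, which is Palais-Smale for $I_\infty$ at the expected level by Lemma~\ref{proof5}, again verifies $\|(u^{k+1}_n)_-\|_{L^{p^*_s}}\to 0$. Since $\widetilde{w}^k_n\ge 0$, on $\{u^k_n\ge 0\}$ one has $(u^{k+1}_n)_-=(\widetilde{w}^k_n-u^k_n)_+\le\widetilde{w}^k_n$, while on $\{u^k_n<0\}$ one has $(u^{k+1}_n)_-=\widetilde{w}^k_n+(u^k_n)_-$; passing to the blow-up variables (with $\zeta_n=\zeta(\lambda^k_n\,\cdot/\varrho^k_n)\to 1$ a.e. and $v^k_n$ the rescaling of $u^k_n$, so that $v^k_n\to v^k\ge 0$ a.e. and $\|(v^k_n)_-\|_{L^{p^*_s}}=\|(u^k_n)_-\|_{L^{p^*_s}}\to 0$ by the induction hypothesis) these become $(v^k\zeta_n-v^k_n)_+$ on $\{v^k_n\ge 0\}$, which tends to $0$ a.e. and is dominated by $v^k\in L^{p^*_s}(\mathbb R^N)$, and $v^k\zeta_n+(v^k_n)_-$ on $\{v^k_n<0\}$, where $(v^k)^{p^*_s}\,\mathbf{1}_{\{v^k_n<0\}}\to 0$ a.e. with majorant $(v^k)^{p^*_s}\in L^1$ and $\|(v^k_n)_-\|_{L^{p^*_s}}\to 0$. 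Dominated convergence therefore closes the induction, and the next profile $v^{k+1}$ is non-negative, hence strictly positive. Everything else --- the energy identity $c=I(v^0)+\sum_{i=1}^{k}I_\infty(v^i)$, the decompositions \eqref{decomposizione}--\eqref{decomposizione2}, the relation \eqref{decomposizione3}, and the termination of the iteration after finitely many steps via the universal lower bound \eqref{snorme} --- is unchanged from the proof of Theorem~\ref{Theorem3}.

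The main obstacle is exactly this propagation of the sign condition: the naive pointwise bounds only keep $(u^k_n)_-$ \emph{bounded} in $L^{p^*_s}$, and it is the dominated-convergence argument carried out in the rescaled coordinates --- with the already-constructed non-negative profile $v^k$ (resp. $v^0$) serving as the majorant --- that upgrades this to convergence to zero. A secondary but essential point is that each profile's non-negativity must be boosted to strict positivity through the strong minimum principle, both to guarantee that the bubbles $\widetilde{w}^k_n$ are non-negative and to deliver the conclusion that $v^1,\dots,v^k$ are \emph{positive} solutions.
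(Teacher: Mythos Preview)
Your proof is correct and follows exactly the approach the paper indicates: repeat the five-step argument of Theorem~\ref{Theorem3} and track the sign condition through the iteration, invoking Proposition~\ref{minimumP} at the end for the strict positivity of the profiles. The paper itself provides no details whatsoever for this variant --- it literally writes ``With minor modifications \dots\ We leave the details to the reader'' and then cites Proposition~\ref{minimumP} for positivity --- so your dominated-convergence argument in rescaled coordinates, with the previously extracted non-negative profile $v^k$ (resp.\ $v^0$) serving as the $L^{p^*_s}$ majorant, is precisely the content that the paper omits. One small overstatement: you say strict positivity of $v^k$ is ``essential'' to make the truncated bubble $\widetilde w^k_n$ non-negative, but in fact $v^k\ge 0$ and $\zeta\ge 0$ already suffice for that; the minimum principle is needed only to upgrade the conclusion from ``non-negative'' to ``positive'' solutions, as the statement requires.
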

	The positivity
	of the limiting profiles $v^1,\dots,v^k$ in the result above can be obtained by appealing again to the minimum principle of Proposition~\ref{minimumP}.

\section{Radial case}

\subsection{Improved embeddings for radial functions}

In the proof of Theorem \ref{radial-case}, we need the following embedding result for the space $D^{s,p}_{0,\mathrm{rad}}(B)$. In what follows, by $K\Subset E$ we mean that $K$ is an open bounded set with compact closure contained in $E$.
\begin{proposition}[Compact embeddings]
\label{prop:radembedding}
Let $1<p<\infty$ and $s\in(0,1)$, we set
\[
p^\#_s=\left\{\begin{array}{rc}
\dfrac{p}{1-s\,p},& \mbox{ if }s\,p<1,\\
+\infty,& \mbox{ if }s\,p\ge 1.
\end{array}
\right.
\]
Then we have the compact embedding  
\[
D^{s,p}_{0,\rm{rad}}(B_R)\hookrightarrow L^q(K),
\]
for every $1\le q<p^\#_s$ and every $K\Subset\mathbb{R}^N\setminus\{0\}$.
\end{proposition}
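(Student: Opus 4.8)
The plan is to combine two ingredients: a pointwise decay estimate for radial functions away from the origin, valid for any $u \in D^{s,p}_{0,\mathrm{rad}}(B_R)$, together with a classical Sobolev embedding on a compactly contained annulus where the functions enjoy full (non-critical) regularity. The key observation is that, for a radial function, the Gagliardo seminorm controls oscillation in the radial variable with a much better scaling than in the full $N$-dimensional setting, because the $N$-dimensional measure puts a weight $r^{N-1}$ on spheres of radius $r$; away from the origin this weight is bounded above and below by positive constants, so the problem essentially reduces to a one-dimensional fractional Sobolev embedding in the radial variable, whose critical exponent is precisely $p/(1-sp)$ when $sp<1$ (and there is no critical exponent when $sp \ge 1$).

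First I would fix $K \Subset \mathbb{R}^N \setminus \{0\}$ and choose $0 < R_0 < R_1$ with $K \subset \mathcal{A} := \{x : R_0 < |x| < R_1\}$; after enlarging we may assume $R_1 < R$ (if $R = \infty$ there is nothing to change). For $u \in D^{s,p}_{0,\mathrm{rad}}(B_R)$, writing $u(x) = \tilde u(|x|)$, I would show
\[
\int_{R_0/2}^{2R_1}\!\int_{R_0/2}^{2R_1} \frac{|\tilde u(\varrho) - \tilde u(\tau)|^p}{|\varrho - \tau|^{1+sp}}\, d\varrho\, d\tau \le C(N,s,p,R_0,R_1)\, [u]_{D^{s,p}(\mathbb{R}^N)}^p,
\]
by restricting the double Gagliardo integral to the region $R_0/2 < |x|, |y| < 2R_1$ and integrating out the angular variables, using that on this region $|x-y| \le |\,|x|-|y|\,| + (\text{angular contribution})$ can be compared with the one-dimensional distance (more precisely, one bounds the angular average of $|x-y|^{-(N+sp)}$ from below; this is where the restriction away from the origin is used, since the angular integration is harmless only when $|x|,|y|$ are bounded away from $0$). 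This gives a continuous embedding $D^{s,p}_{0,\mathrm{rad}}(B_R) \hookrightarrow W^{s,p}((R_0/2, 2R_1))$ in the one-dimensional sense.

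Then I would invoke the one-dimensional fractional Sobolev embedding $W^{s,p}((R_0/2,2R_1)) \hookrightarrow L^q((R_0/2,2R_1))$, which is \emph{compact} for every $q < p^\#_s$ (by Rellich–Kondrachov in the fractional setting on a bounded interval), and compact into $L^\infty$ — indeed into $C^{0,s-1/p}$ — when $sp > 1$. Undoing the change of variables, the weight $r^{N-1}$ is comparable to a constant on $\mathcal{A}$, so $\|u\|_{L^q(K)}^q \le \|u\|_{L^q(\mathcal{A})}^q \le C \int_{R_0}^{R_1} |\tilde u(r)|^q\, dr$, and a bounded sequence in $D^{s,p}_{0,\mathrm{rad}}(B_R)$ yields a sequence of radial profiles bounded in $W^{s,p}((R_0/2,2R_1))$, hence precompact in $L^q$ of the interval, hence precompact in $L^q(K)$.

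The main obstacle is the first step: carefully carrying out the angular integration to pass from the $2N$-dimensional Gagliardo seminorm to the one-dimensional one with the correct constant, and in particular checking that the comparison between $|x-y|$ and $|\,|x|-|y|\,|$ survives integration against $|x-y|^{-(N+sp)}$ uniformly on the annular region — this is exactly the point where the hypothesis $K \Subset \mathbb{R}^N\setminus\{0\}$ enters and cannot be dropped, and it is also the mechanism that upgrades the critical exponent from $p^*_s$ to $p^\#_s$. Everything after that is standard one-dimensional fractional Sobolev theory.
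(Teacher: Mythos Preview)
Your proposal is correct and shares the same core technical step with the paper: the reduction of the $N$-dimensional Gagliardo seminorm on an annulus to a one-dimensional seminorm via angular integration (this is exactly the content of the paper's Lemma~\ref{lm:radial} in the case $sp<1$, see \eqref{N1}). The difference is organizational. The paper separates the argument into a \emph{continuous} Radial Lemma (Lemma~\ref{lm:radial}) and then obtains compactness in Proposition~\ref{prop:radembedding} by interpolation with the already-known compact embedding $D^{s,p}_0(B_R)\hookrightarrow L^p_{\rm loc}$; for $sp>1$ the paper proves the Radial Lemma by a trace inequality on spheres rather than by the one-dimensional reduction. You instead carry the one-dimensional reduction through all cases and invoke the \emph{compact} one-dimensional Rellich--Kondrachov directly, which is slightly more streamlined and avoids the separate interpolation step. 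One small point you should make explicit: to land in $W^{s,p}((R_0/2,2R_1))$ you also need the $L^p$ part of the norm, which follows from $D^{s,p}_0(B_R)\hookrightarrow L^{p^*_s}(\mathbb{R}^N)\hookrightarrow L^p(\mathcal{A})$ together with $r^{N-1}\ge (R_0/2)^{N-1}$ on the annulus; the paper spells this out in the proof of Lemma~\ref{lm:radial}.
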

\begin{proof}
Let us start with the case $s\,p>1$. We remark that we already know that the embedding $D^{s,p}_{0}(B_R)\hookrightarrow L^p_{\rm loc}(\mathbb{R}^N)$ is compact (for example, see \cite[Theorem 2.7]{bralinpar}). A simple interpolation argument permits to infer the desired conclusion. Indeed, let us take $q> p$, a set $K\Subset\mathbb{R}^N\setminus\{0\}$, for every $u\in D^{s,p}_{0,\rm{rad}}(B_R)$ by using Lemma \ref{lm:radial} and \eqref{esplosione} we obtain
\[
\begin{split}
\int_{K} |u|^q\,dx&=\int_{K} \left(|x|^\frac{N-s\,p}{p}\,|u|\right)^{q-p}\,|u|^p\,|x|^{-\frac{N-s\,p}{p}\,(q-p)}\,dx\\
&\le C_K\,[u]^{q-p}_{D^{s,p}(B_R)}\,\int_K |u|^p\,dx.
\end{split}
\] 
Thanks to this we can get the desired conclusion. 
\par
As far as the case $sp\leq 1$ is concerned,
we still use that $D^{s,p}_{0}(B_R)$ compactly embeds into $L^p(B_R)$ and then 
the assertion follows by Lemma \ref{lm:radial} jointly with a standard interpolation argument in Lebesgue spaces.
\end{proof}
\begin{remark}[The exponent $p^\#_s$]
\label{oss:boh!}
We observe that $p^\#_s$ coincides with the one-dimensional Sobolev exponent. In the case $s\,p<1$ it is not possible to go beyond this exponent in Proposition \ref{prop:radembedding}. Indeed, for $s\,p<1$ it is not difficult to construct a bounded sequence $\{u_n\}_{n\in\mathbb{N}}\subset D^{s,p}_{0,{\rm rad}}(B_1(0))$ such that for a suitable compact set $K\subset \mathbb{R}^N\setminus\{0\}$ we have
\[
\lim_{n\to\infty}\|u_n\|_{L^q(K)}=\infty,\qquad \mbox{ for }p^\#_s<q\le \infty.
\]
Let us consider the spherical shells
\[
A_n=\{x\in\mathbb{R}^N\, :\, 1-r_n<|x|<1\},\qquad \mbox{ with } r_n=n^{-\frac{p}{1-s\,p}}.
\]
If we denote by $1_E$ the characteristic function of a set $E$, we observe that the functions $u_n=n\,1_{A_n}$ belong to $D^{s,p}_{0,\rm{rad}}(B_1(0))$. Indeed, if $P(E)$ denotes the perimeter of a smooth set $E\subset\mathbb{R}^N$, we have
\[
[u_n]^p_{D^{s,p}(\mathbb{R}^N)}=n^{p}\,[1_{A_n}]_{D^{s\,p,1}(\mathbb{R}^N)}\le C\,n^{p}\, |A_n|^{1-s\,p}\, P(A_n)^{s\,p},
\]
where the last inequality is \cite[Corollary 4.4]{bralinpar}. It is not difficult to see that 
\[
|A_n|^{1-s\,p}\, P(A_n)^{s\,p}\simeq r_n^{(1-s\,p)}=n^{-p},
\]
which implies that
\[
[u_n]^p_{D^{s,p}(B_1(0))}\le[u_n]^p_{D^{s,p}(\mathbb{R}^N)}\le C.
\]
On the other hand, for $q> p/(1-s\,p)$ we have
\[
\|u_n\|^{q}_{L^{q}(\mathbb{R}^N)}=n^{q}\,|A_n|\simeq n^{q}\,r_n=n^{q-\frac{p}{1-s\,p}},
\]
which diverges. 
\par
We also point out that the very same example shows that in the limit case $q=p^\#_s$ the embedding is continuous, but not compact.
\end{remark}
The previous result was based on the following Radial Lemma for fractional Sobolev spaces. We give the proof for the reader's convenience.
For more general results valid in Besov and Triebel spaces, we refer the reader to \cite{SSV} and \cite[Chapter 6]{triebel2}.
\begin{lemma}[A nonlocal Radial Lemma]
\label{lm:radial}
Let $1<p<\infty$ and $s\in(0,1)$. Let $B_R$ be the ball centered at the origin with radius $R>0$. Then we have the continuous embeddings:
\begin{itemize}
\item if $s\,p>1$
\[
D^{s,p}_{0,\mathrm{rad}}(B_R)\hookrightarrow L^\infty_{\mathrm{loc}}\left(\mathbb{R}^N\setminus\{0\}; |x|^\frac{N-s\,p}{p}\right);
\]
\item if $s\,p<1$
\[
D^{s,p}_{0,\mathrm{rad}}(B_R)\hookrightarrow L^\frac{p}{1-s\,p}_{{\rm loc}}(\mathbb{R}^N\setminus \{0\});
\]
\item if $s\,p=1$
\[
D^{s,p}_{0,\mathrm{rad}}(B_R)\hookrightarrow L^t_{{\rm loc}}(\mathbb{R}^N\setminus \{0\}),\qquad \mbox{ for every }1\le t<\infty.
\]
\end{itemize}
\end{lemma}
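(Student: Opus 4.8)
The plan is to reduce the $2N$-dimensional Gagliardo seminorm of a radial function to a one-dimensional Gagliardo seminorm of its radial profile, and then to invoke the classical one-dimensional fractional Sobolev embedding on a bounded interval. One may assume $N\ge 2$, the case $N=1$ being the one-dimensional embedding itself (restriction of the integral plus H\"older). Given $u\in D^{s,p}_{0,\mathrm{rad}}(B_R)$ write $u(x)=\widetilde u(|x|)$ and fix $0<a<b$; all constants below may depend on $N,s,p,a,b$.

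The core step is the estimate
\[
\|\widetilde u\|_{W^{s,p}(a,b)}\le C\,[u]_{D^{s,p}(\mathbb{R}^N)},
\]
where $\|\widetilde u\|_{W^{s,p}(a,b)}^p=\|\widetilde u\|_{L^p(a,b)}^p+\int_a^b\int_a^b|\widetilde u(r)-\widetilde u(\rho)|^p\,|r-\rho|^{-1-sp}\,dr\,d\rho$. To prove it one passes to polar coordinates $x=r\,\omega$, $y=\rho\,\omega'$ and, for each $(r,\rho)\in(a,b)^2$ with $|r-\rho|<\varepsilon$ (a fixed threshold $\varepsilon\in(0,a)$), restricts the $\omega'$-integration to the geodesic cap $\{\omega'\in S^{N-1}:|\omega-\omega'|<|r-\rho|/\rho\}$. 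On this set $|x-y|=|r\,\omega-\rho\,\omega'|\le|r-\rho|+\rho\,|\omega-\omega'|\le 2\,|r-\rho|$, hence $|x-y|^{-(N+sp)}\ge c\,|r-\rho|^{-(N+sp)}$; the surface measure of the cap is $\simeq(|r-\rho|/\rho)^{N-1}$, which exactly cancels the Jacobian $\rho^{N-1}$ coming from $dy$, while the Jacobian $r^{N-1}$ from $dx$ is bounded below on $(a,b)$. Collecting these contributions, the powers of $|r-\rho|$ combine as $|r-\rho|^{-(N+sp)}\cdot|r-\rho|^{N-1}=|r-\rho|^{-(1+sp)}$, so that
\[
[u]^p_{D^{s,p}(\mathbb{R}^N)}\ge c\iint_{(a,b)^2,\ |r-\rho|<\varepsilon}\frac{|\widetilde u(r)-\widetilde u(\rho)|^p}{|r-\rho|^{1+sp}}\,dr\,d\rho.
\]
The complementary region $|r-\rho|\ge\varepsilon$ is harmless: there the kernel is bounded, so that part is controlled by $\|\widetilde u\|_{L^p(a,b)}^p$, and in turn $\|\widetilde u\|_{L^p(a,b)}^p\le C\,\|u\|_{L^p(\{a\le|x|\le b\})}^p\le C\,\|u\|_{L^{p^*_s}(\mathbb{R}^N)}^p\le C\,[u]_{D^{s,p}(\mathbb{R}^N)}^p$, by H\"older's inequality on a set of finite measure followed by the Sobolev inequality \eqref{sobolevity}. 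This yields the core estimate.

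It then remains to transfer the one-dimensional fractional Sobolev embedding on $(a,b)$ back to $\mathbb{R}^N$. If $sp<1$ one has $W^{s,p}(a,b)\hookrightarrow L^{p/(1-sp)}(a,b)$, so for any $K\Subset\mathbb{R}^N\setminus\{0\}$, choosing $a,b$ with $K\subset\{a\le|x|\le b\}$, polar coordinates give $\int_K|u|^{p/(1-sp)}\,dx\le C\,\|\widetilde u\|_{L^{p/(1-sp)}(a,b)}^{p/(1-sp)}\le C\,[u]^{p/(1-sp)}_{D^{s,p}(\mathbb{R}^N)}$; the case $sp=1$ is identical, using $W^{1/p,p}(a,b)\hookrightarrow L^t(a,b)$ for every $t\in[1,\infty)$. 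If $sp>1$ the one-dimensional embedding yields $\widetilde u\in C^{0,s-1/p}([a,b])$, in particular $\|\widetilde u\|_{L^\infty(a,b)}\le C\,[u]_{D^{s,p}(\mathbb{R}^N)}$; to obtain the sharp weight $|x|^{(N-sp)/p}$ one applies this bound on the fixed annulus $\{1\le|x|\le 2\}$ to the rescaled functions $u_\lambda(x):=\lambda^{(N-sp)/p}u(\lambda x)$, which are still radial with $[u_\lambda]_{D^{s,p}(\mathbb{R}^N)}=[u]_{D^{s,p}(\mathbb{R}^N)}$, and lets $\lambda=|x_0|$ vary, obtaining $|x_0|^{(N-sp)/p}|u(x_0)|\le C\,[u]_{D^{s,p}(\mathbb{R}^N)}$ for all $x_0\neq0$.

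The main obstacle is the core step: it hinges on a \emph{lower} bound for the angular kernel $\int_{S^{N-1}\times S^{N-1}}|r\,\omega-\rho\,\omega'|^{-(N+sp)}\,d\sigma(\omega)\,d\sigma(\omega')$ with the correct homogeneity in $|r-\rho|$. This is precisely where the cap-restriction must be calibrated to angular radius comparable to $|r-\rho|/\rho$, so that all Jacobian powers collapse to give exactly $|r-\rho|^{-(1+sp)}$; once this balancing of exponents is in place, the rest is routine bookkeeping together with the standard one-dimensional embedding and, in the range $sp>1$, a scaling argument.
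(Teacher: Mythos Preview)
Your proof is correct, and the overall strategy---reducing the $2N$-dimensional Gagliardo seminorm to the one-dimensional seminorm of the radial profile on a bounded interval, then invoking the one-dimensional embedding---is the same as the paper's. There are, however, two genuine methodological differences worth noting.

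First, for the kernel lower bound in the core step, the paper computes the angular integral $\Phi(\rho,r)=\int_{-1}^1 (1-t^2)^{(N-3)/2}(\rho^2-2t\rho r+r^2)^{-(N+sp)/2}\,dt$ by an explicit change of variable $\tau=2\rho r(1-t)/(\rho-r)^2$, which isolates the factor $|\rho-r|^{-(1+sp)}(\rho r)^{-(N-1)/2}$ and leaves a remaining integral that is uniformly bounded below on $[R_0,R_1]^2$. Your cap-restriction argument achieves the same outcome by a cleaner geometric mechanism: restricting $\omega'$ to a cap of chord radius $|r-\rho|/\rho$ makes the kernel pointwise comparable to $|r-\rho|^{-(N+sp)}$, and the cap measure $\simeq(|r-\rho|/\rho)^{N-1}$ provides exactly the cancellation with the Jacobian $\rho^{N-1}$. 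Both arguments are localized to $|r-\rho|$ small; the paper does not split off the region $|r-\rho|\ge\varepsilon$, but its lower bound on the remaining integral degenerates gracefully there, while you dispose of that region by the $L^p$ bound.

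Second, for $sp>1$ the paper does \emph{not} use the one-dimensional reduction at all: it applies a trace inequality on $\partial B_\varrho$ together with the Poincar\'e inequality on $B_R$ to obtain the pointwise bound $|u(x)|\le C\,|x|^{-(N-sp)/p}(R/|x|)^s\,[u]_{D^{s,p}(B_R)}$. Your route is more unified---the same one-dimensional reduction, then Morrey's embedding on $(a,b)$, then a scaling argument. Your scaling step actually yields the stronger \emph{global} bound $|x|^{(N-sp)/p}|u(x)|\le C\,[u]_{D^{s,p}(\mathbb{R}^N)}$, without the extra factor $(R/|x|)^s$ that appears in the paper's estimate; this works because your core estimate depends only on the annulus $\{1\le|x|\le 2\}$ and on scale-invariant quantities, so it applies uniformly to all rescalings $u_\lambda$. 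The paper's trace approach is more direct but uses $sp>1$ in two separate places (trace and Poincar\'e); yours treats all three regimes by a single mechanism.
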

\begin{proof}
We divide the proof in three cases.
\vskip.2cm\noindent
\underline{\it Case $s\,p>1$}. Let $0<\varrho<R$, since $u$ is a radial function we get
	\[
	\int_{\partial B_\varrho} |u|^p\,d\mathcal{H}^{N-1}=N\,\omega_N\, \varrho^{N-1} \, |u(x)|^p,\qquad \mbox{ for } |x|=\varrho.
	\]
	We observe that the integral is well-defined, since $u$ has a trace in $L^p(\partial B_\varrho)$ thanks to the hypothesis $s\,p>1$. We can now use the trace inequality for $D^{s,p}(B_\varrho)$ (see \cite[Section 3.3.3]{triebel}), so to obtain
	\[
	\begin{split}
	|u(x)|^p&=\frac{\varrho^{1-N}}{N\,\omega_N}\,\int_{\partial B_\varrho} |u|^p\,d\mathcal{H}^{N-1}\\
	&\le C\,\frac{\varrho^{1-N}}{N\,\omega_N}\, \varrho^{s\,p-1}\,\left\{[u]^p_{D^{s,p}(B_\varrho)}+\frac{1}{\varrho^{s\,p}} \|u\|^p_{L^p(B_\varrho)}\right\},
	\end{split}
	\]
	for some $C=C(N,p,s)>0$.
	In order to get the desired estimate, it is now sufficient to use Poincar\'e inequality (which again needs $s\,p>1$)
	\[
	\frac{1}{\varrho^{s\,p}} \|u\|^p_{L^p(B_\varrho)}\le \frac{1}{\varrho^{s\,p}} \|u\|^p_{L^p(B_R)}\le C\,\left(\frac{R}{\varrho}\right)^{s\,p}\,[u]^p_{D^{s,p}(B_R)},\qquad 0<\varrho<R.
\]
This gives
\begin{equation}
\label{esplosione}
|u(x)|\le C\,|x|^{-\frac{N-s\,p}{p}}\,\left(\frac{R}{|x|}\right)^s\,[u]_{D^{s,p}(B_R)},\qquad 0<|x|<0,
\end{equation}
for some $C=C(N,s,p)>0$. Observe that inequality \eqref{esplosione} holds for $|x|\ge R$ as well, since $u\equiv 0$ on $\mathbb{R}^N\setminus B_R$.
\par
We now take $K\Subset\mathbb{R}^N\setminus\{0\}$.\ Then, there exists $0<R_0<R_1$ such that
\[
K\subset B_{R_1}(0)\setminus B_{R_0}(0).
\]
From \eqref{esplosione} we directly get
\[
\left\||x|^\frac{N-s\,p}{p}\,u\right\|_{L^\infty(K)}\le C\,\left(\frac{R_0}{R_1}\right)^s\,[u]_{D^{s,p}(\mathbb{R}^N)},
\]
which proves the desired embedding.
\vskip.2cm\noindent
\underline{\it Case $s\,p<1$.} Let $u\in D^{s,p}_{0,{\mathrm{rad}}}(\mathbb{R}^N)$. We first show that for every $0<R_0<R_1$ we have (with a slight abuse of notation)
\begin{equation}
\label{N1}
[u]^p_{D^{s,p}(\mathbb{R}^N)}\ge  C\,\int_{R_0}^{R_1} \int_{R_0}^{R_1} \frac{|u(r)-u(\varrho)|^p}{|\varrho-r|^{1+s\,p}}\,d\varrho\,dr,
	\end{equation}
for some $C=C(N,s,p,R_0,R_1)>0$. Indeed, by arguing as in \cite[Lemma B.2]{BF}, we have
	\[
	[u]^p_{D^{s,p}(\mathbb{R}^N)}=C\int_0^\infty \int_0^\infty |u(r)-u(\varrho)|^p\,\varrho^{N-1}\,r^{N-1}\,\Phi(\varrho,r)\,d\varrho\,dr,
	\]
	where
	\[
	\begin{split}
	\Phi(\varrho,r)&:=\int_{-1}^1\dfrac{(1-t^2)^\frac{N-3}{2}}{(\varrho^2-2\,t\,\varrho\,r+r^2)^\frac{N+s\,p}{2}}\,dt\\
	&\ge \int_{1/2}^1\dfrac{(1-t^2)^\frac{N-3}{2}}{\Big((\varrho-r)^2+2\,\varrho\,r\,(1-t)\Big)^\frac{N+s\,p}{2}}\,dt\\
	&=\frac{1}{|\varrho-r|^{N+s\,p}}\,\int_{1/2}^1\dfrac{(1-t^2)^\frac{N-3}{2}}{\Big(1+2\,\dfrac{\varrho\,r}{(\varrho-r)^2}\,(1-t)\Big)^\frac{N+s\,p}{2}}\,dt.
	\end{split}
	\]
	For $\varrho\not= r$, we make the change of variables
	\[
	2\,\dfrac{\varrho\,r}{(\varrho-r)^2}\,(1-t)=\tau.
	\]
	Then, the previous expression becomes
	\[
	\frac{1}{|\varrho-r|^{1+s\,p}}\,\frac{1}{(2\,\varrho\,r)^\frac{N-1}{2}}\,\int_{0}^\frac{\varrho\,r}{(\varrho-r)^2}\dfrac{\left(2-\dfrac{(\varrho-r)^2}{2\,\varrho\,r}\,\tau\right)^\frac{N-3}{2}\,\tau^\frac{N-3}{2}}{(1+\tau)^\frac{N+s\,p}{2}}\,d\tau.
	\]
	For every $0<R_0<R_1$ we thus obtain
	\begin{equation}
	\label{quasi}
	\begin{split}
	[u&]^p_{D^{s,p}(\mathbb{R}^N)}\ge  C\,\int_{R_0}^{R_1} \int_{R_0}^{R_1} |u(r)-u(\varrho)|^p\,\varrho^{N-1}\,r^{N-1}\,\Phi(\varrho,r)\,d\varrho\,dr\\
	&\ge C\int_{R_0}^{R_1} \int_{R_0}^{R_1} \frac{|u(r)-u(\varrho)|^p}{|\varrho-r|^{1+s\,p}}\left[(\varrho\,r)^\frac{N-1}{2}\int_{0}^\frac{\varrho\,r}{(\varrho-r)^2}\dfrac{\Big(2-\dfrac{(\varrho-r)^2}{2\,\varrho\,r}\,\tau\Big)^\frac{N-3}{2}\tau^\frac{N-3}{2}}{(1+\tau)^\frac{N+s\,p}{2}}\,d\tau\right]d\varrho\,dr.
\end{split}
\end{equation}
In order to estimate the last integral, we observe that for $R_0\le \varrho \le R_1$ and $R_0\le r\le R_1$ we have
\begin{equation}
\label{relazioniraggiose}
|\varrho-r|\le R_1-R_0\qquad \mbox{ and }\qquad \frac{\varrho\,r}{(\varrho-r)^2}\ge \frac{\varrho\,r}{(R_1-R_0)^2}\ge \left(\frac{R_0}{R_1-R_0}\right)^2=:\alpha.
\end{equation} 
Thus, we proceed as follows (we assume for simplicity $N\ge 3$)
\[
\begin{split}
(\varrho\,r)^\frac{N-1}{2}\,\int_{0}^\frac{\varrho\,r}{(\varrho-r)^2}&\dfrac{\left(2-\dfrac{(\varrho-r)^2}{2\,\varrho\,r}\,\tau\right)^\frac{N-3}{2}\,\tau^\frac{N-3}{2}}{(1+\tau)^\frac{N+s\,p}{2}}\,d\tau\\
&\ge R_0^{N-1}\,\int_{\frac{\alpha}{2}}^{\alpha}\dfrac{\tau^\frac{N-3}{2}}{\left(1+\tau\right)^\frac{N+s\,p}{2}}\,d\tau\\
&\ge R_0^{N-1}\,\left(\frac{\alpha}{2}\right)^\frac{N-3}{2}\,\int_{\frac{\alpha}{2}}^{\alpha}\dfrac{d\tau}{\left(1+\tau\right)^\frac{N+s\,p}{2}}=C_{R_0,R_1}.
\end{split}
\]
By spending this information into \eqref{quasi}, we obtain \eqref{N1}. Observe that on the right-hand side of \eqref{N1} we have the one-dimensional Gagliardo seminorm of the function $u$ on the interval $[R_0,R_1]$. 
By using Sobolev embedding in dimension $1$, we know that 
\begin{equation}
\label{1d}
\int_{R_0}^{R_1} \int_{R_0}^{R_1} \frac{|u(r)-u(\varrho)|^p}{|\varrho-r|^{1+s\,p}}\,d\varrho\,dr+\int_{R_0}^{R_1} |u|^p\,d\varrho\ge \frac{1}{S}\,\left(\int_{R_0}^{R_1} |u|^\frac{p}{1-s\,p}\,d\varrho\right)^{1-s\,p},
	\end{equation}
	for some $S=S(s,p,R_0,R_1)>0$. 
\par
We now prove the claimed embedding. As above we take $K\Subset\mathbb{R}^N\setminus\{0\}$. Then, there exists $0<R_0<R_1$ such that
	\[
	K\subset B_{R_1}(0)\setminus B_{R_0}(0).
	\]
	For $u\in D^{s,p}_{0,\mathrm{rad}}(\mathbb{R}^N)$ we have
	\[
	[u]^p_{D^{s,p}(\mathbb{R}^N)}\ge \mathcal{S}_{p,s}\,\left( \int_{\mathbb{R}^N} |u|^{p^*_s}\,dx\right)^\frac{p}{p^*_s}\ge \mathcal{S}_{p,s}\,|B_{R_1}(0)|^{-s\,p}\,\int_{B_{R_1}(0)\setminus B_{R_0}(0)} |u|^p\,dx,
	\]
	thus we get
	\[
	[u]^p_{D^{s,p}(\mathbb{R}^N)}\ge C\,[u]^p_{D^{s,p}(\mathbb{R}^N)}+C\,\int_{B_{R_1}(0)\setminus B_{R_0}(0)} |u|^p\,dx,
	\]
	for some $C=C(N,s,p,R_1)>0$. We now use polar coordinates, take advantage of the fact that $R_0>0$ and use formula \eqref{N1}. Therefore, we have (the constant $C$ may vary from line to line)
	\[
	\begin{split}
	[u]^p_{D^{s,p}(\mathbb{R}^N)}&\ge C\,[u]^p_{D^{s,p}(\mathbb{R}^N)}+C\,\int_{B_{R_1}(0)\setminus B_{R_0}(0)} |u|^p\,dx\\
	&\ge C\,\int_{R_0}^{R_1} \int_{R_0}^{R_1} \frac{|u(r)-u(\varrho)|^p}{|\varrho-r|^{1+s\,p}}\,d\varrho\,dr+C\,\int_{R_0}^{R_1} |u|^p\,\varrho^{N-1}\,d\varrho\\
	&\ge C\,\int_{R_0}^{R_1} \int_{R_0}^{R_1} \frac{|u(r)-u(\varrho)|^p}{|\varrho-r|^{1+s\,p}}\,d\varrho\,dr+C\, {R_0}^{N-1}\,\int_{R_0}^{R_1} |u|^p\,d\varrho\\
	&\ge C\,\left(\int_{R_0}^{R_1} |u|^\frac{p}{1-s\,p}\,d\varrho\right)^{1-s\,p}.
	\end{split}
	\]
	In the last line we used \eqref{1d}. Finally, by using that $R_1<+\infty$, we get
	\[
	\begin{split}
	[u]^p_{D^{s,p}(\mathbb{R}^N)}\ge C\, \left(\int_{R_0}^{R_1} |u|^\frac{p}{1-s\,p}\,d\varrho\right)^{1-s\,p}&\ge \frac{C}{R_1^{(N-1)(1-s\,p)}}\, \left(\int_{R_0}^{R_1} |u|^\frac{p}{1-s\,p}\,\varrho^{N-1}\,d\varrho\right)^{1-s\,p}\\
	&\ge C\, \left(\int_{K} |u|^\frac{p}{1-s\,p}\,dx\right)^{1-s\,p},
	\end{split}
	\]
for some $C=C(N,s,p,R_0,R_1)>0$.
This concludes the proof in the case $s\,p<1$.
\vskip.2cm\noindent
{\it Case $s\,p=1$.} This is the same proof as before, we only need to observe that in this case, in place of \eqref{1d}, we have for every $1\le t<\infty$
\begin{equation}
\label{1dconf}
\int_{R_0}^{R_1} \int_{R_0}^{R_1} \frac{|u(r)-u(\varrho)|^p}{|\varrho-r|^{1+s\,p}}\,d\varrho\,dr+\int_{R_0}^{R_1} |u|^p\,d\varrho\ge \frac{1}{T}\,\left(\int_{R_0}^{R_1} |u|^t\,d\varrho\right)^\frac{p}{t},
	\end{equation}
for some $T=T(s,t,p,R_0,R_1)>0$. Then we can proceed as above, we leave the details to the reader.
\end{proof}

\subsection{Proof of Theorem~\ref{radial-case}}
The proof is the same as that of Theorem \ref{Theorem3}, we only need to modify {\bf Step 4} and {\bf Step 5} as follows. 
With the previous notations, as in the proof of Theorem \ref{Theorem3} we already know that $\lambda^1_n\to 0$ as $n$ goes to $\infty$. We now show that this implies that
\begin{equation}
\label{ombelico}
z_0^1=\lim_{n\to\infty} z_n^1=0.
\end{equation}
Indeed, if this was not the case, up to a subsequence, one would have $|z_n^1|\geq\tau_0$ eventually for some $\tau_0>0$.
Taking into account Proposition \ref{prop:radembedding}, observing that $p^*_s<p^\#_s$ for $N\ge 2$ and recalling that $u_n^1$ converges to $0$ almost everywhere, we have $u_n^1\to 0$ in $L^{p^*_s}(K)$ for every $K\Subset\mathbb{R}^N\setminus\{0\}$. Then, for $0<\tau<\tau_0$, we conclude
$$
\delta=\int_{B_{\lambda_n^1}(z_n^1)}|u_n^1|^{p^*_s}\,dx=\int_{B_{\lambda_n^1}(z_n^1)\cap {B_{\tau}(0)}}|u_n^1|^{p^*_s}\,dx+o_n(1)=o_n(1),
$$
since, eventually $B_{\lambda_n^1}(z_n^1)\cap {B_{\tau}(0)}=\emptyset$, thanks to the convergence of $\lambda^1_n$ to $0$.
\par
The property \eqref{ombelico} in turn implies that in {\bf Step 4} we are in the case covered by Lemma \ref{proof5}, i.e.
\[
\lim_{n\to\infty} \frac{1}{\lambda_n^1}\,\mathrm{dist}(z_n^1,\partial B)=\infty,
\]
thus we do not need Assumption {\bf (NA)} this time.
\par
Then, in order to prove \eqref{azero-r}, we need to remove the translations by $z_n^i$ from \eqref{decomposizione}. This is done by appealing to \eqref{ombelico} and continuity of $L^p$ norms with respect to translations. Indeed, by triangle inequality we have
\[
\begin{split}
\left[ u_n-v^0-\sum_{i=1}^k(\lambda_n^i)^{\frac{sp-N}{p}}v^i\left(\frac{\cdot}{\lambda_n^i}\right)\right]_{D^{s,p}(\mathbb{R}^N)}&\le \left[ u_n-v^0-\sum_{i=1}^k(\lambda_n^i)^{\frac{sp-N}{p}}v^i\left(\frac{\cdot - z_n^i}{\lambda_n^i}\right)\right]_{D^{s,p}(\mathbb{R}^N)}\\
&+\sum_{i=1}^k(\lambda_n^i)^{\frac{sp-N}{p}}\,\left[v^i\left(\frac{\cdot - z_n^i}{\lambda_n^i}\right)-v^i\left(\frac{\cdot}{\lambda_n^i}\right)\right]_{D^{s,p}(\mathbb{R}^N)}.
\end{split}
\]
By observing that the both norms converge to $0$, we get the conclusion. \qed
\vskip4pt
\noindent
\appendix

\section{A truncation Lemma}

The following result is proved in \cite[Lemma 5.3]{guida} under the stronger assumption
$u\in D^{s,p}(\mathbb{R}^N)\cap L^p(\mathbb{R}^N)$. We need to remove the last integrability assumption.
\begin{lemma}
\label{Bddlemma}
Let $\psi$ be a Lipschitz function with compact support and $u\in D^{s,p}_0(\mathbb{R}^N)$. Then $\psi\,u\in D^{s,p}_0(\mathbb{R}^N)$ and we have the estimate
\[
[\psi\,u]^p_{D^{s,p}(\mathbb{R}^N)}\le C_1\,\|\psi\|^p_{L^\infty(\mathbb{R}^N)}\,[u]^p_{D^{s,p}(\mathbb{R}^N)}+C_2\,\|\nabla \psi\|^p_{L^\infty(\mathbb{R}^N)}\,\|u\|^p_{L^{p^*_s}(\mathbb{R}^N)},
\]
for some $C_1=C_1(N,s,p)>0$ and $C_2=C_2(N,s,p,K)>0$, where $K:=\mathrm{supp}(\psi)$.
\end{lemma}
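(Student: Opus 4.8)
I would first observe that, once the displayed inequality is established, the membership $\psi\,u\in D^{s,p}_0(\mathbb{R}^N)$ comes for free: by the definition adopted in this paper $D^{s,p}_0(\mathbb{R}^N)=\{v\in L^{p^*_s}(\mathbb{R}^N):[v]_{D^{s,p}(\mathbb{R}^N)}<\infty\}$, and $\|\psi\,u\|_{L^{p^*_s}(\mathbb{R}^N)}\le\|\psi\|_{L^\infty}\,\|u\|_{L^{p^*_s}(\mathbb{R}^N)}<\infty$, while the estimate itself shows $[\psi\,u]_{D^{s,p}(\mathbb{R}^N)}<\infty$. So everything reduces to the bound on $[\psi\,u]^p_{D^{s,p}(\mathbb{R}^N)}$.

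The plan is the usual "fractional Leibniz" splitting, reorganised so as to never invoke $\|u\|_{L^p(\mathbb{R}^N)}$. Starting from the pointwise inequality $|\psi(x)u(x)-\psi(y)u(y)|\le\|\psi\|_{L^\infty}|u(x)-u(y)|+|u(y)|\,|\psi(x)-\psi(y)|$ together with $(a+b)^p\le 2^{p-1}(a^p+b^p)$, I get
\[
[\psi\,u]^p_{D^{s,p}(\mathbb{R}^N)}\le 2^{p-1}\|\psi\|^p_{L^\infty}\,[u]^p_{D^{s,p}(\mathbb{R}^N)}+2^{p-1}\,\mathcal{J},\qquad \mathcal{J}:=\int_{\mathbb{R}^{2N}}\frac{|u(y)|^p\,|\psi(x)-\psi(y)|^p}{|x-y|^{N+s\,p}}\,dx\,dy,
\]
so the first summand already produces the $[u]^p_{D^{s,p}(\mathbb{R}^N)}$ term of the statement and the whole matter is the estimate of $\mathcal{J}$. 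The key structural point is that the integrand of $\mathcal{J}$ vanishes unless $x\in K$ or $y\in K$, $K:=\mathrm{supp}(\psi)$ being compact; I would then split $\mathcal{J}=\mathcal{J}_{\le 1}+\mathcal{J}_{>1}$ according to whether $|x-y|\le 1$ or $|x-y|>1$.

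On $\mathcal{J}_{\le 1}$ the two constraints force $(x,y)\in K_1\times K_1$ with $K_1:=\{z:\mathrm{dist}(z,K)\le 1\}$ bounded; applying the Lipschitz bound $|\psi(x)-\psi(y)|\le\|\nabla\psi\|_{L^\infty}|x-y|$ and the convergence of $\int_{\{|z|\le 1\}}|z|^{p-N-s\,p}\,dz$ (valid because $p-s\,p=p(1-s)>0$), I obtain $\mathcal{J}_{\le 1}\le C(N,s,p)\,\|\nabla\psi\|^p_{L^\infty}\int_{K_1}|u|^p\,dx$, and H\"older's inequality with exponents $p^*_s/p$ and $N/(s\,p)$ on the bounded set $K_1$ upgrades this to $C(N,s,p)\,|K_1|^{s\,p/N}\,\|\nabla\psi\|^p_{L^\infty}\,\|u\|^p_{L^{p^*_s}(\mathbb{R}^N)}$, which is the second term of the statement. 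On $\mathcal{J}_{>1}$ I would use $|\psi(x)-\psi(y)|\le 2\|\psi\|_{L^\infty}$ and treat separately the region $y\in K$ (integrating the kernel $|x-y|^{-N-s\,p}$ in $x$ over $\{|x-y|>1\}$ and then $\int_K|u|^p$ by H\"older as before) and the region $y\notin K$, where necessarily $x\in K$ and one integrates the kernel in $y$ over $\{|x-y|>1\}$ after a H\"older estimate against $\|u\|_{L^{p^*_s}(\mathbb{R}^N)}$; here the relevant tail integral $\int_{\{|z|>1\}}|z|^{-(N+s\,p)\,N/(s\,p)}\,dz$ converges because $(N+s\,p)\,N/(s\,p)>N$. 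This gives $\mathcal{J}_{>1}\le C(N,s,p,K)\,\|\psi\|^p_{L^\infty}\,\|u\|^p_{L^{p^*_s}(\mathbb{R}^N)}$, which by the Sobolev inequality \eqref{sobolevity} is $\le C(N,s,p,K)\,\mathcal{S}_{p,s}^{-1}\,\|\psi\|^p_{L^\infty}\,[u]^p_{D^{s,p}(\mathbb{R}^N)}$ and hence gets absorbed into the first summand. Collecting the three contributions yields the claimed inequality.

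The main obstacle is exactly the long-range term $\mathcal{J}_{>1}$: in \cite{guida} this is where the stronger hypothesis $u\in L^p(\mathbb{R}^N)$ is used (one bounds $\int_{\{|x-y|>1\}}\tfrac{|u(y)|^p}{|x-y|^{N+s\,p}}\,dx\,dy$ directly by $c(N,s,p)\,\|u\|^p_{L^p(\mathbb{R}^N)}$), and removing that hypothesis forces one to exploit the compactness of $\mathrm{supp}(\psi)$ to confine one of the two variables to $K$, pair $|u|^p$ with the tail of the kernel via H\"older in the scale-critical exponent $p^*_s$, and then convert the resulting $\|u\|^p_{L^{p^*_s}(\mathbb{R}^N)}$ into $[u]^p_{D^{s,p}(\mathbb{R}^N)}$ through the Sobolev embedding. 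Everything else is routine; the only quantitative checks are the two kernel integrabilities, near the diagonal ($p-s\,p>0$) and away from it ($(N+s\,p)\,N/(s\,p)>N$), both of which are immediate.
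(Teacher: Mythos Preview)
Your argument is correct and follows essentially the same route as the paper: the same fractional Leibniz splitting, the same near/far-diagonal decomposition of $\mathcal{J}$, and the same localisation to a neighbourhood of $K$ paired with H\"older against $L^{p^*_s}$. The only cosmetic slip is that absorbing $\mathcal{J}_{>1}$ into the first summand via Sobolev makes $C_1$ depend on $K$, contrary to the stated dependencies; to match the statement, merge that contribution into $C_2$ instead by means of $\|\psi\|_{L^\infty}\le \mathrm{diam}(K)\,\|\nabla\psi\|_{L^\infty}$ (the paper's own proof is equally loose on this point).
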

\begin{proof}
We notice that
\begin{align*}
[\psi\,u]^p_{D^{s,p}(\mathbb{R}^N)}& \leq
2^{p-1}\,\|\psi\|^p_{L^\infty(\mathbb{R}^N)}\,[u]^p_{D^{s,p}(\mathbb{R}^N)}+2^{p-1}\,\int_{\mathbb{R}^{2N}} \frac{|u(x)|^p\,|\psi(x)-\psi(y)|^p}{|x - y|^{N+s\,p}}\, dx\, dy.
\end{align*}
With a simple change of variables, the last integral can be written as 
\[
\int_{\mathbb{R}^{N}} |u(x)|^p\,\left(\int_{\mathbb{R}^N}\frac{\,|\psi(x)-\psi(x+h)|^p}{|h|^{N+s\,p}}\, dh\right)\,dx.
\]
By using H\"older inequality with exponents $p^*_s/p$ and $N/sp$, Fubini Theorem and triangle inequality, the previous integral can be estimated by
\[
\begin{split}
\int_{\mathbb{R}^{2N}} \frac{|u(x)|^p\,|\psi(x)-\psi(x+h)|^p}{|h|^{N+s\,p}}\, dx\,dh&=\int_{\mathbb{R}^N}\int_{\{|h|\le 1\}} \frac{|u(x)|^p\,|\psi(x)-\psi(x+h)|^p}{|h|^{N+s\,p}}\, dx\,dh\\
&+\int_{\mathbb{R}^N}\int_{\{|h|>1\}}\frac{|u(x)|^p\,|\psi(x)-\psi(x+h)|^p}{|h|^{N+s\,p}}\, dx\,dh\\
&\le \left(\int_{\mathbb{R}^N} |u|^{p^*_s}\,ds\right)^\frac{p}{p^*_s}\\
&\times \left(\int_{\mathbb{R}^N}\left(\int_{\{|h|\le 1\}} \frac{\,|\psi(x)-\psi(x+h)|^p}{|h|^{N+s\,p}}\, dh\right)^\frac{N}{s\,p}\,dx\right)^\frac{s\,p}{N}\\
&+2^{p-1}\,\int_{\mathbb{R}^N}\int_{\{|h|>1\}}|u(x)|^p\,\frac{|\psi(x)|^p}{|h|^{N+s\,p}}\,dh\,dx\\
&+2^{p-1}\,\int_{\mathbb{R}^N}\int_{\{|h|>1\}} |u(x)|^p\,\frac{|\psi(x+h)|^p}{|h|^{N+s\,p}}\, dx\,dh.
\end{split}
\]
For the first integral containing $\psi$, we observe that the function
\[
x\mapsto \int_{\{|h|\le 1\}} \frac{|\psi(x)-\psi(x+h)|^p}{|h|^{N+s\,p}}\,dh,
\]
is compactly supportedand bounded, indeed
\[
\int_{\{|h|\le 1\}} \frac{|\psi(x)-\psi(x+h)|^p}{|h|^{N+s\,p}}\,dh\le \|\nabla \psi\|_{L^\infty}\,\int_{\{|h|\le 1\}} |h|^{p\,(1-s)-N}\,dh=C\,\|\nabla \psi\|_{L^\infty}.
\]
For the second integral containing $\psi$, by using that $h\mapsto|h|^{N+s\,p}$ is integrable at infinity, we simply have
\[
\int_{\mathbb{R}^N}\int_{\{|h|>1\}}|u(x)|^p\,\frac{|\psi(x)|^p}{|h|^{N+s\,p}}\,dh\,dx\le C\,\|\psi\|_{L^\infty}\,\int_{K} |u|^p\,dx\le C\,|K|^{\frac{s\,p}{N}}\,\|u\|^p_{L^{p^*_s}(\mathbb{R}^N)}.
\]
For the last integral, we just observe that for every $|h|>1$, the function $\psi(\cdot+h)$ is compactly supported. We thus have
\[
\begin{split}
\int_{\mathbb{R}^N}\int_{\{|h|>1\}} |u(x)|^p\,\frac{|\psi(x+h)|^p}{|h|^{N+s\,p}}\, dx\,dh&=\int_{\{|h|>1\}} \int_{K-h}|u(x)|^p\,\frac{|\psi(x+h)|^p}{|h|^{N+s\,p}}\, dx\,dh\\
&\le \|\psi\|^p_{L^\infty}\,\int_{\{|h|>1\}} \left(\int_{K-h}|u|^p\, dx\right)\,\frac{dh}{|h|^{N+s\,p}}\\
&\le C\, \|\psi\|^p_{L^\infty}\, |K|^{\frac{s\,p}{N}}\,\|u\|^p_{L^{p^*_s}(\mathbb{R}^N)}.
\end{split}
\]
By collecting all the estimates, we conclude the proof.
\end{proof}
The following result has been curcially exploited in the proof of Theorem \ref{Theorem3}, in order to localize the rescaled sequences.
\begin{lemma}[Truncation Lemma]
\label{lm:truncation}
Let $\zeta\in C^\infty_0(B_2(0))$ be a positive function such that $\zeta\equiv 1$ on $B_1(0)$. Then
\begin{equation}
\label{tronca}
\lim_{n\to\infty}\left[v\,\zeta(\mu_n\,\cdot)- v\right]_{D^{s,p}({\mathbb R}^N)}=0,
\end{equation}
for any $v\in D^{s,p}_0({\mathbb  R}^N)\cap L^q(\mathbb{R}^N)$ with $q<p^*_s$ and  $\{\mu_n\}_{n\in{\mathbb N}}\subset{\mathbb R}^+$ such that $\mu_n\to 0$. \end{lemma}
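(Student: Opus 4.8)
The plan is to reduce matters to the elementary case $v\in C^\infty_0(\mathbb{R}^N)$ by means of a uniform-in-$n$ bound for the ``cut-off error'' map, and then to invoke the density of $C^\infty_0(\mathbb{R}^N)$ in $D^{s,p}_0(\mathbb{R}^N)$. Throughout, set $\zeta_n(x):=\zeta(\mu_n x)$ and $\eta_n:=1-\zeta_n$, so that $v\,\zeta_n-v=-v\,\eta_n$ and $[v\,\zeta_n-v]_{D^{s,p}(\mathbb{R}^N)}=[v\,\eta_n]_{D^{s,p}(\mathbb{R}^N)}$. Note that $0\le \eta_n\le M:=1+\|\zeta\|_{L^\infty}$, that $\eta_n$ is Lipschitz, that $\eta_n\equiv 0$ on $B_{1/\mu_n}(0)$, and that $\eta_n\to 0$ pointwise as $n\to\infty$ (since $\mu_n\to 0$). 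The core of the argument is the claim that there is $C_0=C_0(N,s,p,\zeta)$, independent of $n$, with
\[
[w\,\eta_n]^p_{D^{s,p}(\mathbb{R}^N)}\le C_0\,[w]^p_{D^{s,p}(\mathbb{R}^N)},\qquad \text{for all }w\in D^{s,p}_0(\mathbb{R}^N)\text{ and all }n\in\mathbb{N}.
\]

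To prove the claim I would start from the pointwise identity $w\eta_n(x)-w\eta_n(y)=\eta_n(x)\,(w(x)-w(y))+w(y)\,(\eta_n(x)-\eta_n(y))$ together with $(a+b)^p\le 2^{p-1}(a^p+b^p)$, which yield
\[
[w\,\eta_n]^p_{D^{s,p}(\mathbb{R}^N)}\le 2^{p-1}M^p\,[w]^p_{D^{s,p}(\mathbb{R}^N)}+2^{p-1}\int_{\mathbb{R}^N}|w(y)|^p\,g_n(y)\,dy,
\]
where $g_n(y):=\int_{\mathbb{R}^N}|\eta_n(x)-\eta_n(y)|^p\,|x-y|^{-N-sp}\,dx$. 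Substituting $t=\mu_n x$ gives the exact scaling $g_n(y)=\mu_n^{sp}\,G(\mu_n y)$ with $G(w):=\int_{\mathbb{R}^N}|\zeta(t)-\zeta(w)|^p\,|t-w|^{-N-sp}\,dt$, hence $\|g_n\|_{L^{N/sp}(\mathbb{R}^N)}=\|G\|_{L^{N/sp}(\mathbb{R}^N)}$ for every $n$. One then checks $G\in L^\infty(\mathbb{R}^N)\cap L^{N/sp}(\mathbb{R}^N)$: boundedness follows by splitting the defining integral at $|t-w|=1$, using $|\zeta(t)-\zeta(w)|\le\|\nabla\zeta\|_{L^\infty}|t-w|$ and $(1-s)p>0$ near the singularity; and since $\zeta$ has compact support one has $G(w)\le C\,|w|^{-N-sp}$ for $|w|$ large, which lies in $L^{N/sp}$ because $(N+sp)\frac{N}{sp}>N$. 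Finally, H\"older's inequality with exponents $p^*_s/p$ and $(p^*_s/p)'=N/sp$, together with the Sobolev inequality \eqref{sobolevity}, gives $\int_{\mathbb{R}^N}|w|^p\,g_n\,dy\le\|G\|_{L^{N/sp}(\mathbb{R}^N)}\,\|w\|_{L^{p^*_s}(\mathbb{R}^N)}^p\le \mathcal{S}_{p,s}^{-1}\,\|G\|_{L^{N/sp}(\mathbb{R}^N)}\,[w]^p_{D^{s,p}(\mathbb{R}^N)}$, which proves the claim.

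With the uniform bound available, the conclusion follows by approximation. Given $\varepsilon>0$, pick $\phi\in C^\infty_0(\mathbb{R}^N)$ with $[v-\phi]_{D^{s,p}(\mathbb{R}^N)}<\varepsilon$. Since $\mu_n\to 0$, for all $n$ large we have $\mathrm{supp}(\phi)\subset B_{1/\mu_n}(0)$, where $\eta_n\equiv 0$, so $\phi\,\eta_n\equiv 0$; then the triangle inequality for the Gagliardo seminorm and the claim applied to $w=v-\phi$ give, for such $n$,
\[
[v\,\zeta_n-v]_{D^{s,p}(\mathbb{R}^N)}=[v\,\eta_n]_{D^{s,p}(\mathbb{R}^N)}\le[\phi\,\eta_n]_{D^{s,p}(\mathbb{R}^N)}+[(v-\phi)\,\eta_n]_{D^{s,p}(\mathbb{R}^N)}\le C_0^{1/p}\,[v-\phi]_{D^{s,p}(\mathbb{R}^N)}<C_0^{1/p}\,\varepsilon,
\]
and letting $\varepsilon\to0$ yields \eqref{tronca}. (This argument uses only $v\in D^{s,p}_0(\mathbb{R}^N)\subset L^{p^*_s}(\mathbb{R}^N)$; the hypothesis $v\in L^q$ with $q<p^*_s$ is supplied by the context in which the lemma is applied but is not actually needed.)

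I expect the delicate point to be the uniform-in-$n$ estimate for $\int_{\mathbb{R}^N}|w|^p\,g_n\,dy$. The obstruction is that $g_n\,dy$ is not an approximate identity: one has $\|g_n\|_{L^\infty}=\mu_n^{sp}\|G\|_{L^\infty}\to0$ while the total mass $\int_{\mathbb{R}^N}g_n=\mu_n^{sp-N}\|G\|_{L^1}\to\infty$, so neither dominated convergence nor a naive $L^\infty$ or $L^1$ bound is enough. The correct substitute is the scale-invariant $L^{N/sp}$ bound for $g_n$, which is exactly dual to the critical Sobolev embedding; this is the step where the Lipschitz regularity and compact support of $\zeta$ enter in an essential way.
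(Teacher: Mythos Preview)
Your proof is correct and takes a genuinely different route from the paper's. After the same initial splitting, the paper estimates $\int_{\mathbb{R}^N}|v|^p\, I_{\mathbb{R}^N}(y)\,dy$ (your $\int|v|^p g_n$) by a direct three-region decomposition of the $y$-integral over $B_{1/\mu_n}$, the annulus $B_{2/\mu_n}\setminus B_{1/\mu_n}$, and $\mathbb{R}^N\setminus B_{2/\mu_n}$, treating each piece by hand; the inner-ball piece is precisely where the extra hypothesis $v\in L^q$ with $q<p^*_s$ enters, and the outer piece is handled via the fractional Hardy inequality. You replace all of this with the single scale-invariant observation $\|g_n\|_{L^{N/sp}}=\|G\|_{L^{N/sp}}$, which pairs exactly with $\|w\|_{L^{p^*_s}}^p$ by H\"older and Sobolev to give a uniform-in-$n$ operator bound, and then close via density of $C^\infty_0$ in $D^{s,p}_0(\mathbb{R}^N)$. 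The payoff is a shorter argument that needs neither the $L^q$ assumption nor Hardy's inequality, so your final remark is justified. The only caveat is that the density you invoke is itself usually proved by a truncation step of the same flavor; but your uniform bound already supplies exactly that step, and density is in any case quoted as known in the paper, so there is no genuine circularity.
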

\begin{proof}
We rewrite the term in \eqref{tronca} as $[v\,\psi_n]_{D^{s,p}({\mathbb R}^N)}$,
where $\psi_n(x):=\zeta(\mu_n\, x)-1$. We have $v\,\psi_n\in D^{s,p}_0(\mathbb{R}^N)$ thanks to Lemma \ref{Bddlemma} and
$$
\frac{|v(x)\,\psi_n(x)-v(y)\,\psi_n(y)|^p}{|x-y|^{N+s\,p}}
\leq 2^{p-1}\,|\psi_n(x)|^p\,\frac{|v(x)-v(y)|^p}{|x-y|^{N+s\,p}}
+2^{p-1}\,\frac{|\psi_n(x)-\psi_n(y)|^p\,|v(y)|^p}{|x-y|^{N+s\,p}}.
$$
Then, since $\|\psi_n\|_{L^\infty}\leq 1$ and $v\in D^{s,p}_0({\mathbb  R}^N)$, 
the Dominated Convergence Theorem yields
$$
\lim_{n\to\infty}\int_{{\mathbb R}^{2N}}	|\psi_n(x)|^p\,\frac{|v(x)-v(y)|^p}{|x-y|^{N+s\,p}}\,dx\,dy=0.
$$
For the second term, we observe that $\psi_n(x)-\psi_n(y)=\zeta(\mu_n\,x)-\zeta(\mu_n\,y)$, introduce
\[
I_{\mathbb{R}^N}(y)=\int_{\mathbb{R}^N}\frac{|\zeta(\mu_n\,x)-\zeta(\mu_n\,y)|^p}{|x-y|^{N+s\,p}}\,dx,
\] 
and decompose
\[
\begin{split}
\int_{\mathbb{R}^N} |v|^p\,I_{\mathbb{R}^N}\,dy&=
\int_{B_{1/\mu_n}} |v|^p\,I_{\mathbb{R}^N}\,dy+
\int_{B_{2/\mu_n}\setminus B_{1/\mu_n}} |v|^p\,I_{\mathbb{R}^N}\,dy+
\int_{\mathbb{R}^N\setminus B_{2/\mu_n}} |v|^p\,I_{\mathbb{R}^N}\,dy\\
&=\int_{B_{1/\mu_n}}\int_{\mathbb{R}^N\setminus B_{1/\mu_n}} |v(y)|^p\,\frac{|\zeta(\mu_n\,x)-1|^p}{|x-y|^{N+s\,p}}\,dy\,dx\\
&+\int_{B_{2/\mu_n}\setminus B_{1/\mu_n}} |v(y)|^p\,I_{\mathbb{R}^N}(y)\,dy\\
&+\int_{\mathbb{R}^N\setminus B_{2/\mu_n}}\int_{B_{2/\mu_n}} |v(y)|^p\,\frac{|\zeta(\mu_n\,x)|^p}{|x-y|^{N+s\,p}}\,dy\,dx=\mathcal{I}_1+\mathcal{I}_2+\mathcal{I}_3.
\end{split}
\]
{\bf First integral}. This is the most delicate one, here the assumption $v\in L^q(\mathbb{R}^N)$ with $q<p^*_s$ will play a major r\^ole. We have
\[
\mathcal{I}_1=\int_{B_{1/\mu_n}}\int_{B_{2/\mu_n}\setminus B_{1/\mu_n}} |v(y)|^p\,\frac{|\zeta(\mu_n\,x)-1|^p}{|x-y|^{N+s\,p}}\,dy\,dx+\int_{B_{1/\mu_n}}\int_{\mathbb{R}^N\setminus B_{2/\mu_n}} |v(y)|^p\,\frac{1}{|x-y|^{N+s\,p}}\,dy\,dx.
\]
We observe that for $y\in B_{1/\mu_n}$
\[
\begin{split}
\int_{B_{2/\mu_n}\setminus B_{1/\mu_n}} \frac{|\zeta(\mu_n\,x)-1|^p}{|x-y|^{N+s\,p}}\,dx&\le \mu_n^p\,\|\nabla \zeta\|_{L^\infty}^p\, \int_{B_{2/\mu_n}\setminus B_{1/\mu_n}} \frac{1}{|x-y|^{N+s\,p-p}}\,dx\\
&\le \mu_n^p\,\|\nabla\zeta\|_{L^\infty}^p\, \int_{B_{3/\mu_n}(y)} \frac{1}{|x-y|^{N+s\,p-p}}\,dx\\
&=C\, \mu_n^{s\,p}\,\|\nabla\zeta\|_{L^\infty}^p.
\end{split}
\]
The other term is simpler, indeed by observing that $|x-y|\ge |x|/2$ for $y\in B_{1/\mu_n}$ and $x\in\mathbb{R}^N\setminus B_{2/\mu_n}$, we have
\[
\int_{\mathbb{R}^N\setminus B_{2/\mu_n}}\frac{1}{|x-y|^{N+s\,p}}\,dx\le C\,\int_{\mathbb{R}^N\setminus B_{2/\mu_n}}\frac{1}{|x|^{N+s\,p}}\,dx=C\, \mu_n^{s\,p}.
\]
Thus we can infer
\[
\mathcal{I}_1\le C\,\mu_n^{s\,p}\,\int_{B_{1/\mu_n}} |v|^p\,dy\le C\,\mu_n^{s\,p-N+\frac{N}{q}\,p}\,\left(\int_{B_{1/\mu_n}} |v|^{q}\,\right)^\frac{p}{q},
\]
which converges to $0$, since
\[
s\,p-N+\frac{N}{q}\,p>0\ \Longleftrightarrow\ q<p^*_s.
\]
{\bf Second integral}. This is equivalent to
\[
\begin{split}
\mathcal{I}_2&=\int_{B_{2/\mu_n}\setminus B_{1/\mu_n}} \int_{B_{2/\mu_n}}|v(y)|^p\,\frac{|\zeta(\mu_n\,x)-\zeta(\mu_n\,y)|^p}{|x-y|^{N+s\,p}}\,dy\,dx\\
&+\int_{B_{2/\mu_n}\setminus B_{1/\mu_n}} \int_{\mathbb{R}^N\setminus B_{2/\mu_n}}|v(y)|^p\,\frac{|\zeta(\mu_n\,y)|^p}{|x-y|^{N+s\,p}}\,dy\,dx=\mathcal{I}_{2,1}+\mathcal{I}_{2,2}.
\end{split}
\]
For the first term, we observe that for $y\in B_{2/\mu_n}\setminus B_{1/\mu_n}$
\[
\begin{split}
\int_{B_{2/\mu_n}}\frac{|\zeta(\mu_n\,x)-\zeta(\mu_n\,y)|^p}{|x-y|^{N+s\,p}}\,dx&\le \mu_n^p\|\nabla \zeta\|^p_{L^\infty}\int_{B_{2/\mu_n}}\frac{1}{|x-y|^{N+s\,p-p}}\,dx\\
&\le \mu_n^p\,\|\nabla \zeta\|^p_{L^\infty}\,\int_{B_{4/\mu_n}(y)}\frac{1}{|x-y|^{N+s\,p-p}}\,dx\\
&\le C\,\mu_n^{s\,p}\,\|\nabla \zeta\|_{L^\infty}.
\end{split}
\]
For the other term, we observe that for $y\in B_{2/\mu_n}\setminus B_{1/\mu_n}$
\[
\begin{split}
\int_{\mathbb{R}^N\setminus B_{2/\mu_n}}\frac{|\zeta(\mu_n\,y)|^p}{|x-y|^{N+s\,p}}\,dx&\le \mu_n^p\,\|\nabla\zeta\|^p_{L^\infty}\,\int_{B_{4/\mu_n}\setminus B_{2/\mu_n}}\frac{1}{|x-y|^{N+s\,p-p}}\,dx\\
&+C\,\|\zeta\|_{L^\infty}^p\,\int_{4/\mu_n}^\infty\frac{\varrho^{N-1}}{(\varrho-|y|)^{N+s\,p}}\,d\varrho\\
&\le C\,\mu_n^{s\,p}\,\|\nabla \zeta\|^p_{L^\infty}\,+C\,\|\zeta\|^p_{L^\infty}\,\int_{4/\mu_n}^\infty\frac{\varrho^{N-1}}{(\varrho-2/\mu_n)^{N+s\,p}}\,d\varrho\\
&\le  C\,\mu_n^{s\,p}\,\left(\|\nabla \zeta\|^p_{L^\infty}+\|\zeta\|^p_{L^\infty}\right).
\end{split}
\]
In conclusion, we obtain
\[
\mathcal{I}_{2}\le C\,\mu_n^{s\,p}\,\int_{B_{2/\mu_n}\setminus B_{1/\mu_n}} |v|^{p}\,dy\le  C\,\mu_n^{s\,p}\,\mu_n^{-s\,p}\left(\int_{B_{2/\mu_n}\setminus B_{1/\mu_n}} |v|^{p^*_s}\,dy\right)^\frac{p}{p^*_s},
\]
and the latter converges to $0$, since $v\in L^{p^*_s}(\mathbb{R}^N)$.
\vskip.2cm\noindent
{\bf Third integral}. 
We proceed similarly as before for the integral in $x$, we have for every $|y|\ge 4/\mu_n$
\[
\int_{B_{2/\mu_n}}\frac{|\zeta(\mu_n\,x)|^p}{|x-y|^{N+s\,p}}\,dx\le C\,\mu_n^{-N}\,\|\zeta\|^p_{L^\infty}\,|y|^{-N-s\,p}\le C\,\|\zeta\|^p_{L^\infty}\,|y|^{-s\,p},
\]
while for $2/\mu_n\le |y|\le 4/\mu_n$ we can use the Lipschitz character of $\zeta$ and get
\[
\begin{split}
\int_{B_{2/\mu_n}}\frac{|\zeta(\mu_n\,x)|^p}{|x-y|^{N+s\,p}}\,dx&\le \mu_n^{p}\,\|\nabla\zeta\|^p_{L^\infty}\,\int_{B_{2/\mu_n}}\frac{1}{|x-y|^{N+s\,p-p}}\,dx\\
&\le \mu_n^{p}\,\|\nabla\zeta\|^p_{L^\infty}\,\int_{B_{6/\mu_n}(y)}\frac{1}{|x-y|^{N+s\,p-p}}\,dx\\
&\le C\, \mu_n^{s\,p}\,\|\nabla\zeta\|^p_{L^\infty}.
\end{split}
\] 
In conclusion we get
\[
\begin{split}
\mathcal{I}_3\le C\,\mu_n^{s\,p}\,\|\nabla \zeta\|^p_{L^\infty}\,\int_{B_{4/\mu_n}\setminus B_{2/\mu_n}} |v|^p\,dy+C\,\|\zeta\|_{L^\infty}^p\,\int_{\mathbb{R}^N\setminus B_{4/\mu_n}} \frac{|v|^p}{|y|^{s\,p}}\,dy.
\end{split}
\]
The first term tends to $0$ as before. The second one vanishes since $|v|^p\,|y|^{-s\,p}$ is integrable, thanks to Hardy inequality for $D^{s,p}(\mathbb{R}^N)$ (see \cite[Theorem 1.1]{FS}).
\end{proof}
\medskip

\section{Some regularity estimates}

We collect in this Appendix some basic regularity results for nonlocal equations needed in the paper.
\begin{proposition}
\label{prop:integrabile}
Let $1<p<\infty$ and $s\in(0,1)$ be such that $s\,p<N$. Let $E\subset\mathbb{R}^N$ be an open set with $|E|=+\infty$ and let $V\in D^{s,p}_0(E)$ be a weak solution of
\begin{equation}
\label{guazione}
\begin{cases}
(- \Delta)_p^s\, V  = \mu\, |V|^{p_s^\ast - 2}\,V, & \mbox{ in }E,\\
V=0 & \mbox{ in }\mathbb{R}^N\setminus E.
\end{cases}
\end{equation}
Then we have
\begin{equation}
\label{hey!}
V\in L^q(\mathbb{R}^N),\qquad \mbox{ for every }\ \frac{p^*_s}{p'}<q\le p^*_s.
\end{equation}
\end{proposition}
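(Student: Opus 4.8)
The plan is to establish \eqref{hey!} first by a Moser/Brezis--Kato iteration, which yields local regularity, and then by a decay estimate at infinity that exploits the nonlocal structure of the equation.

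\emph{Reformulation.} Since $V\in D^{s,p}_0(E)\hookrightarrow L^{p^*_s}(\mathbb{R}^N)$, I set $c(x):=\mu\,|V(x)|^{p^*_s-p}$ and note that $(p^*_s-p)\,\tfrac{N}{s\,p}=p^*_s$, so that $c\in L^{N/(s\,p)}(\mathbb{R}^N)$ with $\|c\|^{N/(s\,p)}_{L^{N/(s\,p)}(\mathbb{R}^N)}=\mu^{N/(s\,p)}\,\|V\|^{p^*_s}_{L^{p^*_s}(\mathbb{R}^N)}$, while $V$ solves weakly $(-\Delta)_p^s V=c(x)\,|V|^{p-2}V$ in $\mathbb{R}^N$. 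The exponent $N/(s\,p)$ is exactly the borderline one for such a coefficient, which makes the Brezis--Kato scheme available (and also explains why a single insertion of a plain power test function produces no gain, by scale invariance).

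\emph{Local integrability.} Testing the equation with localized truncated power functions $\eta^p\,V\,(\min\{|V|,L\})^{\beta}$ (for $\beta$ in a suitable range, $\eta$ a cut-off and $L>0$), which are admissible in $D^{s,p}_0(E)$ by Lemma~\ref{Bddlemma} (they are bounded-slope Lipschitz functions of $V$ times a smooth factor, vanishing off $E$ because $V$ does), and combining the standard power-type algebraic inequality for $J_p$ with the Sobolev inequality \eqref{sobolevity}, one bounds the bilinear form from below by $c_\beta\,\mathcal{S}_{p,s}\,\bigl\||V|^{(\beta+p)/p}\,\eta\bigr\|_{L^{p^*_s}}^p$; the right-hand side is, up to constants and cut-off errors controlled via \eqref{lipschitz}, equal to $\mu\int_{\mathbb{R}^N}c(x)\bigl(|V|^{(\beta+p)/p}\eta\bigr)^p\,dx$, and since $p\,(N/(s\,p))'=p^*_s$, splitting $c=c\,\mathbf{1}_{\{|V|>M\}}+c\,\mathbf{1}_{\{|V|\le M\}}$, using H\"older's inequality and the absolute continuity of $\int|V|^{p^*_s}$, the first piece is absorbed into the left-hand side once $M$ is large. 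Letting $L\to\infty$ and iterating gives $V\in L^q_{\mathrm{loc}}(\mathbb{R}^N)$ for every $q<\infty$; feeding this back into $c=\mu|V|^{p^*_s-p}$ and invoking the local sup-estimate for $(-\Delta)_p^s$ with right-hand side in $L^m_{\mathrm{loc}}$, $m>N/(s\,p)$, upgrades this to $V\in L^\infty_{\mathrm{loc}}(\mathbb{R}^N)$.

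\emph{Decay at infinity and conclusion.} The substance of \eqref{hey!} lies in the exponents $q<p^*_s$: a generic element of $D^{s,p}_0(\mathbb{R}^N)$ need not belong to $L^q$ for $q<p^*_s$ (and on the unbounded set $E$ this is a genuine restriction), so this range must come from decay of $V$ at infinity, extracted from the equation. Here I would use the local supremum estimates for the fractional $p$-Laplacian including the nonlocal tail $\mathrm{Tail}(V;x_0,R)=\bigl(R^{s\,p}\int_{\mathbb{R}^N\setminus B_R(x_0)}|V(x)|^{p-1}\,|x-x_0|^{-N-s\,p}\,dx\bigr)^{1/(p-1)}$: when $|x_0|$ is large, $\|V\|_{L^{p^*_s}(B_R(x_0))}$, the tail of $V$, and $\|c\|_{L^{N/(s\,p)}(\mathbb{R}^N\setminus B_{R})}$ are all small, so iterating such estimates over dyadic annuli propagates the decay and yields $|V(x)|\le C\,|x|^{-(N-s\,p)/(p-1)}$ for $|x|\ge 1$. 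Since $|x|^{-(N-s\,p)/(p-1)}\in L^q(\mathbb{R}^N\setminus B_1)$ precisely when $q\,(N-s\,p)/(p-1)>N$, i.e.\ $q>N(p-1)/(N-s\,p)=p^*_s/p'$, together with $V\in L^\infty_{\mathrm{loc}}\cap L^{p^*_s}$ this gives \eqref{hey!}. The main obstacle is exactly this last step: because of the criticality no plain power test function produces a gain, and since $E$ is possibly unbounded the lower integrability is not automatic, so one genuinely has to exploit the nonlocal tail (equivalently, fractional Wolff potential bounds) to reach the sharp decay rate $(N-s\,p)/(p-1)$; if $V$ changes sign, an additional Kato-type inequality is needed to run the argument for $|V|$.
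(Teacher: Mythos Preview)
Your route is plausible but takes a long detour compared with the paper. The paper proves \eqref{hey!} in one shot by testing the equation for $|V|$ (which is a nonnegative subsolution) with a \emph{sub-unit} power: for $0<\alpha<1$ and small $\varepsilon>0$ one inserts $\varphi=\psi_\varepsilon(|V|)$, where $\psi_\varepsilon$ is a smooth truncation of $t\mapsto t^\alpha/\alpha$, and obtains on the left the Sobolev norm of $\Psi_\varepsilon(|V|)=|V|\,(|V|+\varepsilon)^{(\alpha-1)/p}$. The right-hand side is split over $\{|V|\le K_0\}$ and $\{|V|>K_0\}$; since $(p^*_s-p)\tfrac{N}{sp}=p^*_s$, on the first set H\"older gives a factor $(\int_{\{|V|\le K_0\}}|V|^{p^*_s})^{sp/N}$ which is made small by choosing $K_0$ small and absorbed; on the second set $\int_{\{|V|>K_0\}}|V|^{p^*_s+\alpha-1}\le K_0^{\alpha-1}\int|V|^{p^*_s}<\infty$ because $\alpha<1$. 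Letting $\varepsilon\to 0$ yields $|V|^{(p+\alpha-1)/p}\in L^{p^*_s}$, i.e.\ $V\in L^{p^*_s(p+\alpha-1)/p}$ for every $\alpha\in(0,1)$, which is exactly the range $p^*_s/p'<q\le p^*_s$. No localization, no $L^\infty_{\mathrm{loc}}$ bound, no tail estimate and no decay iteration are needed.

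Your scheme instead iterates \emph{upwards} ($\beta>0$) to reach $L^\infty_{\mathrm{loc}}$, and only then extracts the lower integrability from the pointwise decay $|V(x)|\lesssim |x|^{-(N-sp)/(p-1)}$. This does give more information (the sharp decay rate) and would ultimately prove the statement, but the decay step is itself a substantial result that requires the full tail/Wolff-potential machinery you allude to; for the present proposition it is overkill. The conceptual point you may be missing is that the Brezis--Kato trick works in both directions: choosing the exponent below $1$ gives the lower $L^q$ spaces directly and globally, precisely because on $\{|V|>K_0\}$ the extra factor $|V|^{\alpha-1}$ is bounded, so no a priori higher integrability is needed to close the estimate.
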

\begin{proof}
We first observe that $|V|\in D^{s,p}_0(E)$ is a positive subsolution of \eqref{guazione}, in the sense that
\begin{equation}
\label{sguazione}
\int_{\mathbb{R}^{2N}} \frac{\Big(J_p(|V(x)|-|V(y)|)\Big)\,(\varphi(x)-\varphi(y))}{|x-y|^{N+s\,p}}\,dx\,dy\le \mu\,\int_{\mathbb{R}^N} |V|^{p^*_s-1}\,\varphi\,dx,
\end{equation}
for every $\varphi \in D^{s,p}_0(E)$ positive.\footnote{This can be easily seen as in \cite{BP}.}
We can now closely follow the proof of \cite[Proposition 3.5]{BMS} for $|V|$. For $0<\alpha<1$ and 
$\varepsilon>0$, we introduce the Lipschitz increasing function
\[
\psi_\varepsilon(t)=\int_0^t \left[(\varepsilon+\tau)^\frac{\alpha-1}{p}+\frac{\alpha-1}{p}\,\tau\,(\varepsilon+\tau)^\frac{\alpha-1-p}{p}\right]^p\,d\,\tau,\qquad t\ge 0.
\]
We observe that
\begin{equation}
\label{geps}
0\le \psi_\varepsilon(t)\le \int_0^t (\varepsilon+t)^{\alpha-1}\,d\tau=\frac{1}{\alpha}\,[(\varepsilon+t)^\alpha-\varepsilon^\alpha]\le \frac{t^\alpha}{\alpha},
\end{equation}
where in the second inequality we used that $0<\alpha<1$.
We insert in \eqref{sguazione} the test function $\varphi=\psi_\varepsilon(|V|)\in D^{s,p}_0(E)$. This gives
\[
\begin{split}
\int_{\mathbb{R}^{2N}} & \frac{J_p(|V(x)|-|V(y)|)\, \big(\psi_{\varepsilon}(|V(x)|)-\psi_\varepsilon(|V(y)|\big)}{|x-y|^{N+s\,p}}\,dx\,dy\le\mu\, \int_{\mathbb{R}^N} |V|^{p^*_s-1}\,\psi_\varepsilon(|V|)\,dx.\\
\end{split}
\]
Then one needs to introduce
\[
\Psi_\varepsilon(t):=\int_0^t \psi'_\varepsilon(\tau)^\frac{1}{p}\,d\tau	=t\,(\varepsilon+t)^\frac{\alpha-1}{p},
\]
and pick a level $K_0>0$, whose precise choice will be made in a while. Observe that thanks to Chebyshev inequality, the set $\{|V|>K_0\}$ has finite measure, thus for $0<\alpha<1$ we have
\[
\int_{\{|V|>K_0\}}|V|^{p^*_s+\alpha-1}\,dx<+\infty.
\]
By using \eqref{geps} and proceeding exactly as in \cite{BMS}, we get
\[
\begin{split}
\mathcal{S}_{p,s}\,\left(\int_{\mathbb{R}^N} \Psi_\varepsilon(|V|)^{p^*_s}\,dx\right)^\frac{p}{p^*_s}&\le \frac{\mu}{\alpha}\,\int_{\{|V|>K_0\}}|V|^{p^*_s+\alpha-1}\,dx\\
&+\frac{\mu}{\alpha}\,\left(\int_{\{|V|\le K_0\}} |V|^{p^*_s}\,dx\right)^\frac{p^*_s-p}{p^*_s}\,\left(\int_{\mathbb{R}^N} \Psi_\varepsilon(|V|)^{p^*_s}\,dx\right)^\frac{p}{p^*_s},
\end{split}
\]
The level $K_0=K_0(\alpha,V)>0$ is now chosen so that
\[
\left(\int_{\{|V|\le K_0\}} |V|^{p^*_s}\,dx\right)^\frac{p^*_s-p}{p^*_s}\le \frac{\alpha}{2\,\mu}\,\mathcal{S}_{p,s},
\]
which yields
\[
\left(\int_{\mathbb{R}^N} \left(|V|\,(|V|+\varepsilon)^\frac{\alpha-1}{p}\right)^{p^*_s}\,dx\right)^\frac{p}{p^*_s}\le \frac{2\,\mu}{\alpha\,\mathcal{S}_{p,s}}\,\int_{\{|V|>K_0\}}|V|^{p^*_s+\alpha-1}\,dx,
\]
for every $0<\alpha<1$. By taking the limit as $\varepsilon$ goes to $0$, we get the desired integrability \eqref{hey!}. 
\end{proof}

\noindent
Next we state a variant of an estimate 
proved by Di Castro, Kuusi and Palatucci in \cite{DKP1}.

\begin{lemma}[Logarithmic estimate]
\label{lm:loglemma}
Let $1<p<\infty$ and $s\in(0,1)$ be such that $s\,p<N$. Let $\Omega\subset\mathbb{R}^N$ be an open bounded set, $a\in L^{N/sp}(\Omega)$ and let $u\in D^{s,p}_0(\Omega)\setminus\{0\}$ be such that
\[
\begin{cases}
(- \Delta)_p^s\, u +a\,u^{p-1}\ge \mu\,|u|^{q-2}\,u,& \mbox{ in }\Omega,\\
u  = 0,& \mbox{ in } \mathbb{R}^N \setminus \Omega,
\end{cases}
\]
for some $\mu\in\mathbb{R}$ and $p\le q\le p^*_s$. That is, for every $\varphi\in D^{s,p}_0(\Omega)$ with $\varphi\geq 0$, we have
$$
\int_{\mathbb{R}^{2N}} \frac{J_p(u(x) - u(y))\, (\varphi(x) - \varphi(y))}{|x - y|^{N+s\,p}}\, dx\, dy+\int_\Omega a\,u^{p-1}\varphi\, dx\geq \mu\,\int_\Omega |u|^{q-2}\,u\,\varphi\,dx.
$$
Let us suppose that $u\ge 0$ in $B_{2\,r}(x_0)\Subset\Omega$.
Then for every $0<\delta<1$ there holds
\begin{equation}
\begin{split}
\label{Cacciotail}
\int_{B_r(x_0)\times B_r(x_0)}& \left|\log\left(\frac{\delta+u(x)}{\delta+u(y)}\right)\right|^p\,\frac{1}{|x-y|^{N+s\,p}} \, dx\,dy\\
& \le C\, r^{N-s\,p} \left\{\delta^{1-p}\, r^{s\,p}\,\int_{\mathbb{R}^N\setminus B_{2\,r}(x_0) } \frac{u_-(y)^{p-1}}{|y-x_0|^{N+sp}}\,dy\right.\\
&\left.+ \|a_+\|_{L^{N/sp}({B_{\frac{3}{2}r}(x_0)})}+\max\{-\mu,0\}\,r^{N\,\left(1-\frac{q}{p^*_s}\right)}\, \|u\|_{L^{p^*_s}({B_{\frac{3}{2}r}(x_0)})}^{q-p}+1\right\},
\end{split}
\end{equation}
where $u_-=\max\{-u,0\}$ and $C=C(N,p,s)>0$ is a constant.
\end{lemma}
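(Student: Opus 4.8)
The plan is to follow, almost verbatim, the proof of the logarithmic lemma of Di Castro, Kuusi and Palatucci \cite{DKP1}: the nonlocal part of the argument is literally theirs, and the only new ingredients are the two terms produced by the weight $a$ and by the (possibly negative) critical-type source $\mu\,|u|^{q-2}u$. First I would fix a cut-off $\psi\in C^\infty_0(B_{\frac{3}{2}r}(x_0))$ with $0\le\psi\le 1$, $\psi\equiv 1$ on $B_r(x_0)$ and $|\nabla\psi|\le C/r$. Since $\mathrm{spt}(\psi)\subset B_{\frac{3}{2}r}(x_0)\Subset B_{2r}(x_0)$, the hypothesis $u\ge 0$ in $B_{2r}(x_0)$ gives $u\ge 0$ on $\mathrm{spt}(\psi)$; this is the only place where that hypothesis is used. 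For $0<\delta<1$ I would insert in the (super)solution inequality the test function
\[
\varphi:=(\delta+u)^{1-p}\,\psi^p ,
\]
which is non-negative, bounded and compactly supported in $B_{\frac{3}{2}r}(x_0)$, hence admissible (it lies in $D^{s,p}_0(\Omega)$ by the same truncation argument as in Lemma~\ref{Bddlemma}). Denoting by $\mathcal L$ the nonlocal bilinear form evaluated at $(u,\varphi)$, the hypothesis yields
\[
\mathcal L\ \ge\ \mu\int_\Omega |u|^{q-2}u\,\varphi\,dx-\int_\Omega a\,u^{p-1}\,\varphi\,dx .
\]

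The next step is to bound $\mathcal L$ from above exactly as in \cite{DKP1}. Splitting $\mathbb{R}^{2N}$ into the diagonal block $B_{\frac{3}{2}r}(x_0)\times B_{\frac{3}{2}r}(x_0)$ and its complement, the pointwise algebraic inequality of \cite{DKP1} on the diagonal block produces the logarithmic quantity with a negative sign, up to a remainder controlled by $\int_{B_{\frac{3}{2}r}(x_0)\times B_{\frac{3}{2}r}(x_0)}|\psi(x)-\psi(y)|^p\,(\delta+u(x))^{1-p}\,|x-y|^{-N-sp}\,dx\,dy\le C\,r^{N-sp}$; on the complementary (tail) block one writes $|x-y|\ge c\,|y-x_0|$ for $x\in B_{\frac{3}{2}r}(x_0)$, $y\notin B_{2r}(x_0)$, and notices that the only region where $u$ may be negative is $\mathbb{R}^N\setminus B_{2r}(x_0)$ (on $B_{2r}(x_0)\setminus B_{\frac{3}{2}r}(x_0)$ one still has $u\ge 0$, and that part is either favourable or absorbed into the $C\,r^{N-sp}$ remainder). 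Altogether
\[
\mathcal L\ \le\ -\,c\int_{B_r(x_0)\times B_r(x_0)}\left|\log\frac{\delta+u(x)}{\delta+u(y)}\right|^p\frac{dx\,dy}{|x-y|^{N+sp}}+C\,r^{N-sp}+C\,\delta^{1-p}\,r^{N}\int_{\mathbb{R}^N\setminus B_{2r}(x_0)}\frac{u_-(y)^{p-1}}{|y-x_0|^{N+sp}}\,dy ,
\]
with $c,C=C(N,s,p)>0$. This step is identical to \cite{DKP1} and I would merely quote it.

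It remains to estimate the two new terms, using $u\ge 0$ on $\mathrm{spt}(\psi)$. There $0\le u^{p-1}(\delta+u)^{1-p}\le 1$, so by H\"older's inequality with exponents $N/(sp)$ and its conjugate,
\[
\int_\Omega a\,u^{p-1}\,\varphi\,dx\le\int_{B_{\frac{3}{2}r}(x_0)}a_+\,dx\le C\,r^{N-sp}\,\|a_+\|_{L^{N/sp}(B_{\frac{3}{2}r}(x_0))} .
\]
For the source term: if $\mu\ge 0$ then $-\mu\int_\Omega|u|^{q-2}u\,\varphi\,dx\le 0$; if $\mu<0$, using $u^{q-1}(\delta+u)^{1-p}\le u^{q-p}$ (here $q\ge p$ enters) and H\"older with exponents $p^*_s/(q-p)$ and its conjugate (the case $q=p$ being trivial),
\[
-\mu\int_\Omega|u|^{q-2}u\,\varphi\,dx\le\max\{-\mu,0\}\int_{B_{\frac{3}{2}r}(x_0)}u^{q-p}\,dx\le\max\{-\mu,0\}\,C\,r^{N-sp}\,r^{N\left(1-\frac{q}{p^*_s}\right)}\,\|u\|_{L^{p^*_s}(B_{\frac{3}{2}r}(x_0))}^{q-p},
\]
where I use the bookkeeping identity $|B_{\frac{3}{2}r}(x_0)|^{1-(q-p)/p^*_s}=C\,r^{N-sp+N(1-q/p^*_s)}$, equivalently $N\,p/p^*_s=N-sp$. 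Inserting these two bounds together with the upper bound for $\mathcal L$ into the lower bound for $\mathcal L$, and writing $r^N=r^{N-sp}\,r^{sp}$, produces exactly \eqref{Cacciotail} after absorbing $1/c$ into $C$.

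I expect the only genuinely non-routine ingredient to be the upper estimate for $\mathcal L$, but that is precisely the content of \cite{DKP1} and is therefore quoted rather than reproved; the handling of the weight and of the critical-type right-hand side reduces, thanks to $u\ge 0$ on the support of $\varphi$, to the two elementary H\"older estimates above, the only delicate point there being the scaling computation $N-N(q-p)/p^*_s=(N-sp)+N(1-q/p^*_s)$ that makes the $\mu$-term fit the prefactor $r^{N-sp}$.
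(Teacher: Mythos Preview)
Your proposal is correct and follows essentially the same approach as the paper: insert the test function $\varphi=\phi^p(\delta+u)^{1-p}$ with a standard cut-off supported in $B_{3r/2}(x_0)$, quote the nonlocal estimate from \cite{DKP1} verbatim, and handle the two extra terms by the elementary H\"older bounds $\int a\,u^{p-1}\varphi\le\int_{B_{3r/2}}a_+\le C\,r^{N-sp}\|a_+\|_{L^{N/sp}}$ and $\int u^{q-1}\varphi\le\int_{B_{3r/2}}u^{q-p}\le C\,r^{N-sp}\,r^{N(1-q/p^*_s)}\|u\|_{L^{p^*_s}}^{q-p}$. The paper's own proof is in fact shorter than yours, since it simply records these two inequalities and refers to \cite[Lemma~1.3]{DKP1} for everything else; your write-up is a faithful expansion of that sketch (one small slip: the diagonal remainder in the DKP estimate does not carry the factor $(\delta+u)^{1-p}$, but since you are quoting the result rather than reproving it, this does not affect the argument).
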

\begin{proof}
The proof is exactly the same of that of the Logarithmic Lemma for supersolutions in the case $a\equiv 0$, see \cite[Lemma 1.3]{DKP1}. We take a test function $\phi\in C^\infty_0(B_{3/2\,r}(x_0))$ such that
\[
0\le \phi\le 1,\qquad \phi\equiv 1 \mbox{ on } B_r(x_0),\qquad |\nabla \phi|\le \frac{C}{r}.
\]
Then we insert the test function $\varphi=\phi^p\,(\delta+u)^{1-p}$ in the equation. By using that
\[
\int_\Omega a\,u^{p-1}\,\frac{\phi^p}{(\delta+u)^{p-1}}\,dx\le \int_{B_{\frac{3}{2}\,r}(x_0)} a_+\,dx\le C\, r^{N-s\,p}\,\left(\int_{B_{\frac{3}{2}\,r}(x_0)} (a_+)^\frac{N}{s\,p}\,dx\right)^\frac{s\,p}{N},
\]
and\footnote{For $q=p$, we simply have
\[
\int_\Omega u^{q-1}\,\frac{\phi^p}{(\delta+u)^{p-1}}\,dx\le \int_{B_{2\,r}(x_0)} \phi^p\,dx\le c\, r^N. 
\]}
\[
\int_\Omega u^{q-1}\,\frac{\phi^p}{(\delta+u)^{p-1}}\,dx\le \int_{B_{\frac{3}{2}r}(x_0)} u^{q-p}\,\phi^p\,dx\le C\,r^{N-s\,p}\,r^{N\,\left(1-\frac{q}{p^*_s}\right)}\,\left(\int_{B_{\frac{3}{2}r}(x_0)} u^{p^*_s}\,dx\right)^\frac{q-p}{p^*_s},
\]
and proceeding exactly as in the proof of \cite[Lemma 1.3]{DKP1} to estimate the nonlocal term, we end 
up with inequality \eqref{Cacciotail}.
\end{proof}
\begin{proposition}[Minimum principle]
\label{minimumP}
Let $1<p<\infty$ and $s\in(0,1)$ be such that $s\,p<N$. Let $\Omega\subset\mathbb{R}^N$ be an open bounded connected set, $a\in L^{N/sp}(\Omega)$ and let $u\in D^{s,p}_0(\Omega)\setminus\{0\}$ be a non negative function such that
\[
\begin{cases}
(- \Delta)_p^s\, u +a\,u^{p-1}\ge \mu\,u^{q-1},& \mbox{ in }\Omega,\\
u  = 0,& \mbox{ in } \mathbb{R}^N \setminus \Omega,
\end{cases}
\]
for some $\mu\in\mathbb{R}$ and $p\le q\le p^*_s$. Then we have $u>0$ almost everywhere in $\Omega$.
\end{proposition}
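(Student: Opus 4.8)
Proposition (Minimum principle) — proof plan.

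The plan is to show that the zero set $Z=\{u=0\}\cap\Omega$ is open in $\Omega$; since $u\ge 0$, $u\in D^{s,p}_0(\Omega)\setminus\{0\}$ and $\Omega$ is connected, this forces $|Z|=0$, hence $u>0$ a.e. The strategy follows the classical nonlocal approach based on the Logarithmic Lemma (Lemma~\ref{lm:loglemma}). First I would fix any ball $B_{2r}(x_0)\Subset\Omega$. Since $u\ge 0$ everywhere, the hypotheses of Lemma~\ref{lm:loglemma} apply (note $u_-\equiv 0$, so the first term on the right-hand side of \eqref{Cacciotail} vanishes), and we obtain for every $0<\delta<1$
\[
\int_{B_r(x_0)\times B_r(x_0)} \left|\log\left(\frac{\delta+u(x)}{\delta+u(y)}\right)\right|^p\,\frac{dx\,dy}{|x-y|^{N+s\,p}}\le C_0,
\]
where $C_0=C_0(N,p,s,r,x_0,\|a_+\|_{L^{N/sp}},\mu,q,\|u\|_{L^{p^*_s}})>0$ is \emph{independent of $\delta$}.

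Next I would argue by contradiction: suppose $|\{u=0\}\cap B_r(x_0)|>0$. Set $w_\delta(x):=\log\bigl(1+\delta^{-1}u(x)\bigr)=\log\bigl((\delta+u(x))/\delta\bigr)$, which lies in $D^{s,p}(B_r(x_0))$ by the bound above (the integrand in the displayed inequality is exactly $|w_\delta(x)-w_\delta(y)|^p/|x-y|^{N+sp}$). On the set $A_0:=\{u=0\}\cap B_r(x_0)$ we have $w_\delta\equiv 0$, while on $A_+:=\{u>0\}\cap B_r(x_0)$ we have $w_\delta(x)\to+\infty$ pointwise as $\delta\to 0^+$. If $|A_+|>0$ as well, then by a Poincaré–type inequality on $B_r(x_0)$ (relative to the portion $A_0$ of positive measure where $w_\delta$ vanishes) one gets
\[
\int_{B_r(x_0)} |w_\delta|^p\,dx \le C\,|A_0|^{-1}\,r^{N+sp}\int_{B_r(x_0)\times B_r(x_0)}\frac{|w_\delta(x)-w_\delta(y)|^p}{|x-y|^{N+sp}}\,dx\,dy\le C_1,
\]
uniformly in $\delta$. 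But $\int_{A_+}|w_\delta|^p\,dx\to+\infty$ by Fatou's lemma, a contradiction. Hence either $|A_+|=0$ or $|A_0|=0$; in the first case $u\equiv 0$ a.e.\ on $B_r(x_0)$, in the second $u>0$ a.e.\ on $B_r(x_0)$.

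Finally I would patch these local alternatives together. Let $U=\{x\in\Omega:\ u>0 \text{ a.e.\ in some ball around } x\}$ and $V=\{x\in\Omega:\ u=0 \text{ a.e.\ in some ball around } x\}$. Both are open, disjoint, and by the dichotomy above they cover $\Omega$ up to the set of points lying in no admissible ball — but the above argument shows every ball $B_r(x_0)\Subset\Omega$ falls into exactly one alternative, so $U\cup V=\Omega$. Connectedness of $\Omega$ forces $\Omega=U$ or $\Omega=V$; since $u\not\equiv 0$, the latter is impossible, hence $u>0$ a.e.\ in $\Omega$. The main obstacle I anticipate is making the fractional Poincaré inequality with the measure-theoretic vanishing constraint (the step giving $C_1$) fully rigorous with a constant depending only on $|A_0|$ and $r$; this is standard for $W^{s,p}$ but the dependence on $|A_0|$ must be tracked carefully so that the contradiction survives the limit $\delta\to 0^+$. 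Alternatively, one can bypass the Poincaré step by applying \eqref{Cacciotail} on a \emph{chain} of overlapping balls to propagate positivity, which is the route I would take if the measure-dependence proves delicate.
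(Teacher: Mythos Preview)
Your proposal is correct and follows essentially the same route as the paper: the paper simply refers to \cite[Theorem~A.1]{BF} and says one should replace the logarithmic estimate there by Lemma~\ref{lm:loglemma}, which is precisely the Logarithmic-Lemma-plus-connectedness argument you outline. Your concern about the fractional Poincar\'e step is unwarranted: since $w_\delta\equiv 0$ on $A_0$, averaging over $y\in A_0$ gives directly
\[
\int_{B_r}|w_\delta|^p\,dx=\frac{1}{|A_0|}\int_{A_0}\int_{B_r}|w_\delta(x)-w_\delta(y)|^p\,dx\,dy\le \frac{(2r)^{N+s\,p}}{|A_0|}\,[w_\delta]^p_{D^{s,p}(B_r)},
\]
so the constant depends on $|A_0|$ and $r$ exactly as you wrote, with no delicacy involved.
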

\begin{proof}
The proof is the same of that for the case $a_+\equiv 0$ and $\mu\ge 0$,  which is contained in \cite[Theorem A.1]{BF}. It is sufficient to replace the logarithmic estimate there with the one of Lemma \ref{lm:loglemma} and use that $u\in L^{p^*_s}(\Omega)$. We leave the details to the reader.
\end{proof}

\bigskip

\end{document}